\documentclass[11pt]{book}
\usepackage{amssymb,amsmath,amsthm}
\usepackage{stmaryrd}
\usepackage[mathscr]{eucal}
%
%
\usepackage{geometry}
\geometry{verbose,dvips=false,pdftex=false,vtex=false,
                paperwidth=210truemm,
                paperheight=289truemm, 
                body={132.00truemm,222.00truemm},
                hmarginratio=2:2,
                vmarginratio=4:4,
                headheight=20pt, 
                twoside,
                }
\usepackage[pdftex,
                        a4,
                        center
                        ]{crop}
%
%
\parindent 1.142em
\newcommand{\sfrac}[2]{\text{\footnotesize $\frac{#1}{#2}$}}
\newcommand{\msfrac}[2]{\text{\fontsize{10}{10}\selectfont $\frac{#1}{#2}$}}
%
\newcommand{\vsfrac}[2]{{\textstyle \frac{#1}{#2}}}
\newcommand{\bx}{$\blacksquare$}

\newcommand{\Fh}{\mathfrak{h}}
\newcommand{\TS}{\textstyle}
\newcommand{\MSh}{\mspace{0.25mu}}
\newcommand{\MSH}{\mspace{0.50mu}}
\newcommand{\MSA}{\mspace{1mu}}
\newcommand{\MSAH}{\mspace{1.5mu}}
\newcommand{\MSB}{\mspace{2mu}}
\newcommand{\MSC}{\mspace{3mu}}
\newcommand{\MSD}{\mspace{4mu}}
\newcommand{\MSE}{\mspace{5mu}}
\newcommand{\MShZ}{\mspace{-0.25mu}}
\newcommand{\MSHZ}{\mspace{-0.5mu}}
\newcommand{\MSZ}{\mspace{-1mu}}
\DeclareMathOperator{\Ree}{Re}
\DeclareMathOperator{\Imm}{Im}
\DeclareMathOperator{\Arg}{Arg}
\DeclareMathOperator{\Log}{Log}
\DeclareMathOperator{\sgn}{sgn}
\DeclareMathOperator{\Rank}{rank}

\DeclareMathOperator{\ctn}{ctn}
\DeclareMathOperator{\supp}{supp}
\DeclareMathOperator{\card}{card}
%
%
\usepackage{titlesec,titletoc}

\titleformat{\chapter}[display]
{\normalfont\fontsize{16.4}{19.8}\selectfont\bfseries}{\centering \tiny}{0.2em}{\centering}
%
\renewcommand{\thesection}{\arabic{section}}
\titleformat{\section}[hang]%
{\normalfont\large\bfseries}{\centering \S \thesection.}{.14em}{\vspace{1.26ex} \centering}
\renewcommand{\thesubsection}{\arabic{section}.\arabic{subsection}.}
\titleformat{\subsection}[runin]%
{\normalfont\bfseries}{\thesubsection}{0.2em}{}
\titleformat{\paragraph}[runin]%
{\normalfont\bfseries}{}{1em}{}
\titlecontents{section}[1.5em]{\normalfont}{}{}{\titlerule*[0.80pc]{.}\contentspage[\normalfont\thecontentspage]}[]
\titlecontents{subsection}[2.5em]{\normalfont\itshape}{}{}{\titlerule*[0.80pc]{.}\contentspage[\normalfont\thecontentspage]}[]
\titlecontents{subsubsection}[3.4em]{\normalfont\itshape}{}{}{\titlerule*[0.80pc]{.}\contentspage[\normalfont\thecontentspage]}[]
\setcounter{tocdepth}{1}
\usepackage{fancyhdr}
\pagestyle{fancy}

\fancyhead[CO]{\slshape \rightmark}
\fancyhead[CE]{\slshape \leftmark}
\fancyhead[LO,RE]{}
\fancyhead[RO,LE]{\thepage}
\fancyfoot[CO,CE,LO,LE,RO,LE]{}


\newtheoremstyle{dah-thm}
{2.52ex}
{2.52ex}
{\itshape}
{}
{\bfseries}
{}
{0.5em}
{}
\newtheoremstyle{dah-rem}
{2.52ex}
{2.52ex}
{}
{}
{\bfseries}
{}
{0.5em}
{}

\theoremstyle{dah-thm}
\newtheorem{theorem}[subsection]{Theorem}
\newtheorem{lemma}[subsection]{Lemma}

\newtheorem{corollary}[subsection]{Corollary}
\theoremstyle{plain}
\newtheorem*{thm48bisA}{Theorem 4.8$\MSH$A}
\newtheorem*{thm48bisB}{Theorem 4.8$\MSH$B}
\newtheorem*{thm48bisC}{Theorem 4.8$\MSH$C}
\theoremstyle{dah-rem}
\newtheorem{remark}[subsection]{Remark}


\begin{document}

\begin{frontmatter}
\end{frontmatter}

\begin{mainmatter}
\chapter[$\hspace{7.59em}$Gaussians, Zeros, and Linear 
Combinations$\hspace{4.17em}$\textsl{[Part A]}] 
{ON $\MSB$GAUSSIANS, $\MSB$ZEROS, $\MSA$AND $\MSB$LINEAR \\
COMBINATIONS $\MSB$OF $\MSB$L$\MSB$-$\MSB$FUNCTIONS\\ \vspace*{1mm}
\large Part A: $\MSH$Entire Functions of Beurling-Selberg Type\\
and Probability Distributions\\  
\vspace*{1mm} \Large \textup{D.~A. HEJHAL\footnote{Part of the work 
in this paper was supported by a Bell Companies Fellowship at the 
Institute for Advanced Study in Princeton, N.J.}}    }

This is the first of (what is projected to be) a short series of 
technical reports aimed at articulating certain connections 
between the broad 
subject headings of Gaussians, zeros, and $\MSA$linear combinations 
of $L\MSA$-$\MSA$functions$\MSA$ within the setting of 
analytic number theory.  

In later 
installments, we shall find ourselves 
particularly interested 
in developing some $\MSA$techniques$\MSA$ by means of which it becomes 
feasible to say something fairly precise (at least 
near $\Ree(s) =\vsfrac{1}{2}$) about both the 
asymptotic value distribution and the 
distribution of zeros manifested by run-of-the-mill 
linear combinations $\MSAH F(s)\MSAH$ 
of the aforementioned sort. $\MSC$A large portion of our 
results will have underpinnings 
that go back to work of Atle Selberg from the 1940s and 
mid$\MSA$-$\MSHZ$'70s. $\MSB$See [Sel] for an updated 
perspective on part of this; $\MSC$also [BH]. 

To set the stage for these things, it $\MShZ$is 
$\MShZ$convenient $\MShZ$to $\MShZ$begin $\MShZ$with 
$\MShZ$several $\MSAH\MSh$more basic$\MSA\MSH$ 
reports devoted to discussing some background material  
that $\MShZ$will$\MSh$ for the most part be seen to either 
be standard $\MSH$-$\MSH$ or else a natural \emph{variant} of  
something that is. $\MSD$Apart from two or three items of 
the latter type, any novelty in these earlier installments 
will be confined solely to methodological matters.

\section{Some Interpolation Formulae and Related Heuristics}
\subsection{} The traditional reference for entire functions of 
Beurling-Selberg type is [Beu] coupled with [Sel2, 
pp.$\MSB$213--218,$\MSE\MSAH$226]. $\MSZ$In very 
loose terms, $\MSAH$the central issue here 
$\MSAH$function-theoretically$\MSA$ 
is as follows: $\MSE\MSA$Consider 
the real line ${\mathbb R}$.  \,Let $\chi_E(x)$ denote
the indicator function of set $E$ and
\renewcommand{\theequation}{1.\arabic{equation}}
\begin{equation}\label{1.1}
\sgn(x)=\left \{ \begin{array}{ll}
0, & x=0\\[1ex]
x/|x|, & x\neq 0
\end{array} \right \} . 
\end{equation}
Keeping $\ell \in {\mathbb Z}^+$, let $m(x)$ be either $\sgn(x)$ or 
$\chi_{[0,\ell]}(x)\MSB$.  \,How does one \emph{construct} entire functions
$\MSA f(z) \MSA$ of exponential type 
at most $2\pi$ such that, along ${\mathbb R}$, one has
\\[2ex]
(i)\phantom{ii}\quad  $f(x) \in {\mathbb R}$ and $ f(x) \geqq m(x)\MSB$;  
\\[2ex]
and
\\[2ex]
(ii)\phantom{i}\quad $ \lim_{\MSB|x|\rightarrow \infty}\, (f(x)-m(x))\,=\, 0 \MSD\MSD$?
\\[2ex]
In (ii),$\MSA$ the faster the vanishing, the better.  Can one rig things,
for instance, so that one also has
\\[2.5ex]
(iii)\quad $ \displaystyle \int^{\infty}_{-\infty}\,
|f(x)-m(x)|^q\,dx < \infty 
\mbox{\phantom{iii}for every $q \in [1,\infty)$}$$\MSD$?
\\[1.6ex]
An $L_1\MSA$-$\MSA$norm of (something close to) minimal size would 
be especially desirable.

In \S 1, $\MSA$our goal will simply be to \emph{informally} identify  
some natural candidates for $f(z)$. 
\,Substantiation of 
any proposed $f$ will be left for \S 2.$\MSA$\footnote{Note 
that the analogous constructions for $f \MSZ\leqq\MSZ m$ and non-integral 
$\MSA\ell\MSA$ are also treated there.}

\subsection{} To get started,   
we need a few basics. We'll 
assume the standard $L_2$ theory of Fourier transforms 
on ${\mathbb R}$ as being known. The prototypical formulae
\begin{align*}
&\hat{f}(p)=\int^\infty_{-\infty} f(x)e^{-2\pi ipx}dx\\
& f(x)=\int^\infty_{-\infty} \hat{f}(p)e^{2\pi ipx}dp\\
& \int^\infty_{-\infty}|f(x)|^2dx =
 \int^\infty_{-\infty}|\hat{f}(p)|^2dp
\vspace{2.52ex} \\
& \langle f,g \rangle\;\, =\, \;\langle \hat{f}, \hat{g} \rangle\\
& (f')^\wedge \;=\;2\pi ip\hat{f}(p)\\
& (f\ast g)^\wedge \;=\;\hat{f}\hat{g}
\end{align*}
are thus available as need be (in, for instance, Schwartz 
space ${\mathcal S}$). Over and beyond this, one readily 
checks that
\[
\, f(x)= g(\lambda x) \;\Rightarrow\; \hat{f}(p)=
\frac{1}{|\lambda|}\hat{g}(p/\lambda)
\]
for $\lambda \neq 0$ and that
\begin{align*}
f(x) = e^{-\pi ax^2} \;& \Rightarrow \; \hat{f}(p)
=\sqrt{\frac{1}{a}}\,\, e^{-\pi p^2/a}\\
f(x) = \max\{0, b-|x|\} \;& \Rightarrow \; \hat{f}(p)
 = \frac{\sin^2(\pi pb)}{\pi^2p^2}\\
f(x)= \chi_{[-c,c]}(x) \;& \Rightarrow \; \hat{f}(p)
 =  \frac{\sin(2\pi pc)}{\pi p}\\
f(x)=\sgn(x)\mspace{1mu}\chi_{[-c,c]}(x) \;& \Rightarrow \;
  \hat{f}(p) = -2\pi i p \frac{\sin^2(\pi pc)}{\pi^2p^2}\,\, .
\end{align*}

For test  
functions $\varphi$ in $C({\mathbb R}) \cap L_1({\mathbb R})$
whose ``periodization''\vspace*{-0.35ex}
\[
\, \Phi(x) \equiv \sum^{\infty}_{k=-\infty} \varphi (x+k)\vspace*{-0.20ex}
\]
converges \emph{uniformly} on $[0,1]$, one knows that the Poisson
summation formula 
\begin{equation}\label{1.2}
\sum^{\infty}_{k=-\infty} \varphi (k) = 
\sum^{\infty}_{m=-\infty} \hat{\varphi}(m) 
\end{equation}
holds anytime the right-hand 
series is convergent.  
Cf. $\MSZ$[Z1, pp.68, $\MSZ$89(3.4)]; $\MSA$also 
[Gau, $\MSZ$pp.$\MSA$88--89].   
$\MSAH$In particular, $\MSA$matters are readily seen to 
hold with absolute convergence throughout anytime         
$\varphi$ is $\MSA C^2$ and $\varphi^{(j)} \in L_1({\mathbb R})$ 
for $0 \leqq j \leqq 2$.$\MSA$\footnote{At 
the other extreme, see [Kat, pp.130$\MSB$(problem 15),~12(2.7),~125(1.10)]
for a ``mass {\linebreak} redistribution style'' construction of a continuous probability
density $g(t)$ on ${\mathbb R}$ whose Fourier transform $\varphi$ belongs to
$C({\mathbb R}) \cap L_1({\mathbb R})$ but, for which, 
$\varphi(n)=\delta_{n0}$ and $g(n)=0$ hold at every $n\in {\mathbb Z}$. 
Equation (1.2) clearly \emph{fails} for the pair $(\varphi , g)$. 
By either [Z1, p.12(6.2)] or [Kat, p.13], the periodizations of $\varphi$ 
and $g$ must sum to 0 and 1, respectively, almost everywhere.  Compare  
[KL, p.306] and (overly-optimistic) problem 12 in [Fel2, p.648].}

\subsection{} Another result that's very familiar, albeit 
on a much deeper level, is \linebreak the Paley-Wiener theorem, 
which characterizes $L_2({\mathbb R})$ functions obtained  
as restrictions of entire functions of exponential type.  
In this regard, cf. [Boa, p.103 (theorem 6.8.1)],  
[PaW, p.13 (note the typos in 6.10)], and [Z2, p.272 (theorem 7.2)].  
\,When the type is at most $2\pi \alpha$, the representation
\begin{equation}\label{1.3}
f(z)=\int^\alpha_{-\alpha} g(v)e^{2\pi ivz}dv
\end{equation}
holds with a unique $g\in L_2[-\alpha,\alpha]$ which, 
in turn, satisfies
\[
\int^\alpha_{-\alpha}|g(v)|^2dv = \int^\infty_{-\infty}|f(x)|^2dx\,.
\]
A simple application of Leibnitz's rule then gives
\begin{equation}\label{1.4}
f^{(m)}(z) = \int^\alpha_{-\alpha} (2\pi iv)^m g(v)e^{2\pi ivz}dv
\end{equation}
for every $m\geqq 1$ (the $L_2$ norm of $f^{(m)}$
 necessarily being finite in each case).

By applying Parseval's equation to the
{\it Fourier series}
of $(2\pi iv)^mg(v)$ on $[-\alpha,\alpha]$,
one immediately sees that:
\begin{align}
 \int^\alpha_{-\alpha}|g(v)|^2dv \MSA&=\MSA \frac{1}{2\alpha}
 \sum^\infty_{k=-\infty} \Big |f\Big (\frac{k}{2\alpha}\Big )
\Big|^2 = \int^\infty_{-\infty}|f(x)|^2dx\MSC;\label{1.5}\\[0.35ex]
 \frac{1}{2\alpha} \sum^{\infty}_{k=-\infty} \Big |f^{(m)}
 \Big (\frac{k}{2\alpha}\Big )\Big |^2 &= \int^\infty_{-\infty}
 |f^{(m)}(x)|^2dx\MSC\leqq\MSB(2\pi \alpha)^{2m} \int^\infty_{-\infty}
 |f(x)|^ 2dx\MSC.\vspace*{0.35ex}\label{1.6}
\end{align}
Relations (1.5) and (1.6) continue to hold  
when $k$ is replaced throughout by 
$k+\Theta \MSE(0\leqq \Theta < 1)$.  At the same time, it pays
to keep in mind that, by (1.4),
\begin{equation}\label{1.7}
|f^{(m)}(x)| \leqq \sqrt{2\alpha}
\MSA\frac{\MSD(2\pi \alpha)^m}{\sqrt{1+2m}}\MSA\|f\|_2 
\end{equation}
holds for every $\MSB x \in {\mathbb R} \MSB$ and $m \geqq 0 $.

\subsection{} To develop an \emph{interpolation formula} for $f$, 
one simply returns to (1.3) \linebreak and substitutes the  
standard $L_2$ Fourier expansion
\vspace*{-1.5ex}
\[
\sum^\infty_{n=-\infty} c_ne^{\pi inv/\alpha}
\vspace*{-1.5ex}
\]
of $g(v)$. \,Since 
$c_n = \frac{1}{2\alpha} f(\frac{-n}{2\alpha})$, 
a quick calculation yields
\begin{equation}\label{1.8}
f(z) = \frac{\sin(2\pi \alpha z)}{2\pi \alpha}
 \sum^\infty_{k=-\infty} \frac{(-1)^kf(k/2\alpha)}{z-k/2\alpha}
\end{equation}
just as in [Boa, p.220 (11.5.8)].  Cf. also here 
[Z2, p.275 (theorem 7.19)] and [Lev, p.150 (theorem 1)]. 
Expansion (1.8) is often referred to 
as the ``cardinal series'' or sampling theorem.

By passing to $(f(z)-f(0))/z$,\footnote{which again has 
exponential type at most $2\pi\alpha$} one 
can now accommodate milder growth\linebreak 
restrictions on $|f(x)|$. References [Boa]  
and [Z2] ({\it loc.$\MSC$cit.}) provide an immediate
elaboration on this point; \,one finds with very little 
effort that\\[-1.5ex]
\begin{align}
f(z) = \; & f'(0) \frac{\sin(2\pi\alpha z)}{2\pi\alpha} 
+ \frac{\sin (2\pi\alpha z)}{2\pi\alpha}
 \Big \{\frac{f(0)}{z}\Big \}\\[1.77ex]\label{1.9}
& + \, \frac{\sin(2\pi\alpha z)}{2\pi\alpha}
 \sum_{k\neq 0} (-1)^k f\Big (\frac{k}{2\alpha}\Big )
 \Big[ \frac{1}{z-k/2\alpha}+ \frac{1}{k/2\alpha}
\Big ] \nonumber
\end{align}
\\[-1ex]
anytime $|f(x)|/(1+|x|)\in L_2({\mathbb R})$. The classical 
relation
\begin{equation}\label{1.10}
\frac{\pi}{\sin\pi w} = \frac{1}{w} + \sum_{k\neq 0}
 (-1)^k \Big (\frac{1}{w-k}+\frac{1}{k}\Big )
\end{equation}
turns out to be exactly what is needed to handle  
the terms arising from $f(0)$ in $(f(z)-f(0))/z$.
(Notice that (1.10) is just (1.9) with $f \equiv 1$.)

Formula (1.9) has antecedents going back many years. 
In chronological order, see [Po1, \S5], [Val, especially 
p.204 (9)(11) with $\mu=1$], [PSz, problem III.165], 
[Har, theorem 10], and 
[Hig1,  
\S1.4 $\MSZ\MSZ$(minus the typo in (10))$\MSH$]  
for an interesting historical perspective on this. \,Note
that (1.8) follows from (1.9) simply by applying the 
latter to the product function $zf(z)\MSA$.

\subsection{}  
Looked at geometrically, the set of nodes in formula
(1.8) can be said to be $\vsfrac{1}{2\alpha}\MSA{\mathbb Z}$ in 
an obvious sense.  If \emph{additional} information in 
the form of values of $f'$ happens to be available at the 
nodes, an elementary reshuffling in (1.3) can be 
used to ``cut the nodal set down'' to just 
$\MSB\vsfrac{1}{\alpha}{\mathbb Z}\MSB$.\vspace*{0.375ex}

To see this, we follow Vaaler  
[Vaa, p.195 (3.5)(3.6)]. \vspace*{0.125ex}$\MSA$(Compare 
[Sel2, p.214 (lines 18--19)].)  Clearly:
\small
\begin{align*}
f(z) \,& =\,  \int^\alpha_0 g(v)e^{2\pi ivz}dv 
+ e^{-2\pi i\alpha z}\int^\alpha_0g(t-\alpha)e^{2\pi itz}dt
\\[1.26ex]
f\Big (\frac{k}{\alpha}\Big )\, & = \, \int^\alpha_0
 [g(v)+g(v-\alpha)]e^{2\pi ivk/\alpha}dv\MSD;
\\[1.26ex]
\frac{1}{2\pi i}f'(z) \, & = \, \int^\alpha_0 vg(v)e^{2\pi
 ivz}dv + e^{-2\pi i \alpha z} \int^\alpha_0 
(t-\alpha)g(t-\alpha)e^{2\pi itz}dt
\\[1.26ex]
\frac{1}{2\pi i}f' \Big (\frac{k}{\alpha}\Big )\,
 & = \, \int^\alpha_0[vg(v)+(v-\alpha)
g(v-\alpha)]e^{2\pi ivk/\alpha}dv \MSD.
\end{align*}
\normalsize
Accordingly, as $L_2$ Fourier series on $[0, \alpha]$,
\small
\begin{align*}
g(v) + g(v-\alpha) \,& \sim \, \sum_k \frac{1}{\alpha}f\Big
 (\frac{k}{\alpha}\Big ) e^{-2\pi ikv/\alpha}\\
vg(v)+(v-\alpha)g(v-\alpha)\, & \sim \, \sum_k \frac{1}{2\pi
 i\alpha}f' \Big (\frac{k}{\alpha}\Big )e^{-2\pi ikv/\alpha}\,.
\end{align*}
\normalsize
Denoting the left-hand expressions by $G_1$ and $G_2$, 
respectively, we can now write
\small
\begin{align*}
g(v) \,& = \, \Big (1-\frac{v}{\alpha}\Big ) G_1 
+ \frac{1}{\alpha} G_2\\
g(v-\alpha) \,& = \, \frac{v}{\alpha}G_1 -\frac{1}{\alpha} G_2
\end{align*}
\normalsize
and, in this way, obtain
\small
\begin{align*}
& g(v)\, \sim \,\Big (1-\frac{v}{\alpha}\Big ) \sum_k 
\frac{1}{\alpha} f \Big (\frac{k}{\alpha}\Big )
e^{-2\pi i kv/\alpha} + \frac{1}{\alpha} \sum_k 
\frac{1}{2\pi i\alpha}f'\Big (\frac{k}{\alpha}\Big )
 e^{-2\pi i kv/\alpha}
\\[1.26ex]
& g(v-\alpha)\, \sim \,\frac{v}{\alpha} \sum_k
 \frac{1}{\alpha} f \Big (\frac{k}{\alpha}\Big )
 e^{-2\pi ikv/\alpha} - \frac{1}{\alpha} \sum_k 
\frac{1}{2\pi i\alpha} f' \Big (\frac{k}{\alpha}
\Big ) e^{-2\pi ikv/\alpha}
\end{align*}
\normalsize
in a natural $L_2$ sense over $[0,\alpha]$.   
$\MSB$By substituting back, we immediately get
\small
\begin{align*}
f(z) \, = \,& \sum_k \frac{1}{\alpha} f\Big 
(\frac{k}{\alpha}\Big ) \int^\alpha_0 \Big
 (1-\frac{v}{\alpha}\Big ) e^{-2\pi ivk/\alpha}e^{2\pi ivz}dv\\
& + \,  e^{-2\pi i\alpha z} \sum_k \frac{1}
{\alpha} f\Big (\frac{k}{\alpha}\Big ) \int^\alpha_0
 \frac{t}{\alpha} e^{-2\pi itk/\alpha}e^{2\pi itz}dt\\
& + \, \sum_k \frac{1}{2\pi i\alpha} f' 
\Big (\frac{k}{\alpha}\Big ) \int^\alpha_0
 \frac{1}{\alpha} e^{-2\pi ivk/\alpha}e^{2\pi ivz}dv\\
&  + \, e^{-2\pi i \alpha z} \sum_k \frac{(-1)}{2\pi i
 \alpha} f' \Big (\frac{k}{\alpha}\Big ) 
\int^\alpha_0 \frac{1}{\alpha} e^{-2\pi 
itk/\alpha} e^{2\pi itz}dt\,.
\end{align*}
\normalsize
Writing $t=\alpha+u$  
in the second and fourth sums produces:
{\allowdisplaybreaks
\begin{align*}
f(z)\, & = \, \sum_k \frac{1}{\alpha} f\Big 
(\frac{k}{\alpha}\Big )\int^\alpha_0 \Big 
(1-\frac{v}{\alpha}\Big ) e^{-2\pi ivk/\alpha}e^{2\pi ivz}dv\\
& \quad +  \, \sum_k \frac{1}{\alpha} f\Big (\frac{k}{\alpha}\Big )
 \int^0_{-\alpha} \Big (1+\frac{u}{\alpha}\Big )
 e^{-2\pi i uk/\alpha}e^{2\pi iuz}du\\
& \quad + \, \sum_k \frac{1}{2\pi i\alpha^2} f' \Big 
(\frac{k}{\alpha}\Big ) \int^\alpha_0 e^{-2\pi 
ivk/\alpha} e^{2\pi ivz}dv\\
& \quad  + \, \sum_k \frac{(-1)}{2\pi i\alpha^2} f' \Big 
(\frac{k}{\alpha}\Big ) \int^0_{-\alpha} 
e^{-2\pi iuk/\alpha} e^{2\pi iuz}du\\
\, &= \, \sum_k f \Big (\frac{k}{\alpha}\Big )
 \int^\alpha_{-\alpha} \Big (\frac{\alpha-|v|}
{\alpha^2}\Big ) e^{2\pi iv(z-k/\alpha)} dv\\
& \quad + \, \sum_k f' \Big (\frac{k}{\alpha}\Big ) 
\int^\alpha_{-\alpha} \frac{\sgn(v)}{2\pi i\alpha^2}
 e^{2\pi iv(z-k/\alpha)}dv\\
\,&=  \, \sum_k f \Big (\frac{k}{\alpha}\Big ) 
\Big (\frac{\sin(\pi\alpha p)}{\pi\alpha p}\Big )^2\\
& \quad + \, \sum_k f' \Big (\frac{k}{\alpha}\Big ) (-p)
\Big (\frac{\sin (\pi\alpha p)}{\pi\alpha p}\Big )^2\\
& \qquad  \qquad \mbox{\{with $p\equiv \frac{k}{\alpha}-z\MSB$\}}
\\[1.26ex]
\, &=  \,  \sum_k f\Big (\frac{k}{\alpha}\Big ) \Big 
(\frac{\sin \pi\alpha z}{\pi\alpha}\Big )^2(z-k/\alpha)^{-2}\\
& \quad +\,  \sum_k f' \Big (\frac{k}{\alpha}\Big ) 
(z-k/\alpha)\Big (\frac{\sin \pi\alpha z}{\pi\alpha}
\Big )^2 (z-k/\alpha)^{-2}\,.
\end{align*}  }

In other words, the alternate representation
\begin{equation}\label{1.11}
f(z) = \Big (\frac{\sin \pi\alpha z}{\pi\alpha}
\Big )^2 \Big \{\sum^\infty_{k=-\infty} 
\frac{f(k/\alpha)}{(z-k/\alpha)^2} + 
\sum^\infty_{k=-\infty}\frac{f'(k/\alpha)}{z-k/\alpha}\Big \}
\end{equation}
is available to us anytime (1.3) holds. \,Compare: (1.8),~(1.5),~(1.6). One 
readily checks that the right-hand expression behaves like
\[
\qquad \qquad \qquad  f \Big (\frac{k}{\alpha}\Big )\, + \,
 f' \Big (\frac{k}{\alpha}\Big ) 
\Big (z-\frac{k}{\alpha}\Big )
\,  + \, O(1) \Big (z- \frac{k}{\alpha}
\Big )^2 \hspace{8.0em}\mbox{(1.11$'$)}
\]
near each nodal point $k/\alpha$. (See [Hig2, pp.$\MSC$97--100] for 
an interesting higher-order generalization; $\MSAH$also p.$\MSB$58 
in the second volume of this work.)

\subsection{} Just as with (1.8), once (1.11) 
is known, one can pass to $(f(z)-f(0))/z$ to 
widen the class of admissible $f$. (See [Vaa].)
For present purposes, however, this augmentation
is unnecessary and a simple limit trick suffices
to give us the heuristic formats that we seek 
(\`{a} la \S1.1) in \emph{both} cases of $m$.

One reasons as follows. In view of 
(1.11) and (1.11$'$) [with $\alpha = 1$] and 
the fact that $\ell \in {\mathbb Z}^+$, 
the interpolatory format
\begin{equation}\label{1.12}
{\mathcal F}_\ell(z)= \Big (\frac{\sin \pi z}{\pi} 
\Big )^2 \Big \{ \sum^\ell_{k=0} \frac{1}{(z-k)^2} 
+ \frac{A}{z}+ \frac{B}{\ell-z} \Big \} \quad (A>0,\, B>0)
\end{equation}
clearly occupies a distinguished position vis \`{a} 
vis $\chi_{[0,\ell]}(x)$ --- at least for suitably 
controlled $A$ and $B$.

For the $L_1$ norm of 
${\mathcal F}_\ell(x)-\chi_{[0,\ell]}(x)$ to be finite, it 
is necessary that $B-A$ reduce to zero. In considering 
(1.12) as a format, we therefore tacitly assume this to be so.

For bounded $A$, simple use of the identity
\begin{equation}\label{1.13}
1=\Big (\frac{\sin \pi x}{\pi} \Big )^2 
\sum^\infty_{n=-\infty} \frac{1}{(x-n)^2}
\end{equation}
shows 
that $\|{\mathcal F}_\ell-\chi_{[0,\ell]}\|_p = O(\ell^{1/p})$
for \emph{each}  p $>$ 1. (The contribution from  
$ |x| \MSZ\geqq\MSZ 3\ell $ is trivially seen to be $o(1)$.) 
Consideration of the elementary calculus estimate 
\[
\sum^\infty_{n=1} 
\frac{1}{(n+\omega)^2}\, < \, \frac{1}{\omega} \qquad \;(\omega>0)
\]
quickly demonstrates that the earlier exponent $1/p$ 
is replaceable by $0$.

In light of this, it now makes eminently good sense
to try to \emph{approximate} $\sgn(x)$ by letting  
$\ell \rightarrow \infty$ in the combination 
$ 2({\mathcal F}_\ell (x) - \frac{1}{2})$.  Insofar
as the choice of $\MSB A \MSB$ can be kept bounded, the term 
with $\MSB B \MSB$ automatically drops out \linebreak on compact 
subsets of ${\mathbb R}\MSA$ (in this connection, 
see also (1.7)). $\MSB$The expression
\[
{\mathcal F}^\ast (z)\, =\, 2 \bigg ( 
\frac{\sin \pi z}{\pi}\bigg )^2 \bigg \{ 
\sum^\infty_{k=0} \frac{1}{(z-k)^2} 
+ \frac{A}{z}\bigg \}  - 1
\]
thus takes on a special significance in regard 
to $\sgn(x)$.  Notice incidentally that 
${\mathcal F}^\ast(0)=1\MSC$ (not $0$)$\MSA$;
$\MSB$ also that ${\mathcal F}^\ast (x)=O(1)$.

By virtue of (1.13), ${\mathcal F}^\ast(z)$ can be 
written in three different ways:
\begin{align*}
{\mathcal F}^\ast \,&= \, \Big (\frac{\sin \pi z}{\pi}
 \Big )^2 \Big \{ \sum^\infty_{k=0} \frac{1}{(z-k)^2}
 - \sum_{k<0} \frac{1}{(z-k)^2 } + \frac{2A}{z} 
\Big \}; \tag{1.14$_a$} \\
 {\mathcal F}^\ast \,&=\,  -1 + \Big (\frac{\sin 
\pi z}{\pi} \Big )^2 \Big \{ 2 \sum^\infty_{k=0}
 \frac{1}{(z-k)^2} + \frac{2A}{z} 
\Big \}; \tag{1.14$_b$} \\[0.7ex]
 {\mathcal F}^\ast \,&=\;\:  1 + \Big (\frac{\sin 
\pi z}{\pi} \Big )^2 \Big \{\hspace{-.63ex}-2 \sum_{k<0} 
\frac{1}{(z-k)^2} + \frac{2A}{z} 
\Big \}\,. \tag{1.14$_c$}
\end{align*}

One wants the $L_1$ norm 
of ${\mathcal F}^\ast(x)-\sgn(x)$ 
to be finite.  Since the earlier calculus estimate can
be strengthened to read
\setcounter{equation}{14}
\begin{equation}\label{1.15}
\frac{1}{\omega} - \frac{1}{\omega^2} < \sum^\infty_{n=1}
 \frac{1}{(n+\omega)^2} < \frac{1}{\omega}\,\,,
\end{equation}
equations (1.14$_b$) and (1.14$_c$) immediately show 
that there is \emph{precisely one} admissible value of 
$A \MSA$; \,viz., $A=1$.  

We'll subsequently consider (1.14) only under 
this restriction.  
The obvious \emph{hope} here, of course, is that, when $A=1$, 
the norms $\|{\mathcal F}_{\ell} - \chi_{[0,\ell]}\|_p$ 
will remain uniformly bounded for all $\MSB p \geqq 1 \MSB$ as 
$\ell \rightarrow \infty$.

With this, our informal determination of plausible 
$f(z)$$\MSC$-$\MSC$formats is complete.

\begin{remark}
Though, in this section, the role played by equation (1.9) 
has mainly been motivational, in other approximation 
settings (less ``constrained'' as to $f'$), matters  
change and (1.9), rather than (1.11), becomes 
the ``ansatz-of-choice.''  $\MSA$See [Vaa, p.187 ff] 
-- or [Ess, p.16 ff] -- for a good example. 
\end{remark}

\section{Entire Functions of Beurling-Selberg Type}

\subsection{} After the introductory discussion given in \S1, it is 
relatively pedestrian to go back and 
place matters on a rigorous footing. For this 
purpose, we first recall (1.11$'$) and then write
\renewcommand{\theequation}{2.\arabic{equation}}
\setcounter{equation}{0}
\begin{align}
B(z) & =  \Big ( \frac{\sin\pi z}{\pi} \Big )^2
 \Big \{\sum^\infty_{k=0} \frac{1}{(z-k)^2} -
 \sum^\infty_{n=1} \frac{1}{(z+n)^2} +
 \frac{2}{z}\Big \}\,; \label{2.1}\\
b(z) & =  \Big ( \frac{\sin\pi z}{\pi} 
\Big )^2 \Big \{\sum^\infty_{k=1} \frac{1}{(z-k)^2} 
- \sum^\infty_{n=0} \frac{1}{(z+n)^2} +
 \frac{2}{z}\Big \}\,; \label{2.2}\\
W(z) & =  \Big ( \frac{\sin\pi z}{\pi} \Big )^2 
\Big \{\sum^\infty_{k=1} \frac{1}{(z-k)^2} -
 \sum^\infty_{n=1} \frac{1}{(z+n)^2} + 
\frac{2}{z}\Big \}\,; \label{2.3}\\
K(z) & =  \Big ( \frac{\sin\pi z}{\pi z} \Big )^2 
= \int^1_{-1}(1-|v|) e^{2\pi ivz}dv\,.\label{2.4}
\end{align}
One immediately checks that $B(z)$ corresponds to 
${\mathcal F}^{\ast}(z)$ (\`{a} la (1.14)), 
$b(z)=-B(-z),\, W=B-K \MSZ=\MSZ b+K$, and 
that the function  
$2H(x)$ appearing in [Beu, p.371 (bot)] is simply 
$b(x/2\pi)$. According to Selberg, the functions 
$B$ and $H$ were first considered by Beurling around
 1940 or so\,; cf. [Sel2, pp.226 (lines 10--17), 218 (20.17)].

Motivated by equations (20.19), (20.2), and (20.12)
 in [Sel2, pp.214--218], we also define
\begin{align}
S_\ell(z) & =  \sfrac{1}{2}\, [B(z)+B(\ell-z)] \label{2.5}
\\[.63ex]
\sigma_\ell(z) &=  \sfrac{1}{2}\, [b(z)\MSE+\MSE b(\ell-z)]\label{2.6}
\end{align}
for any positive real $\ell$. When $\ell$ is
 integral, one immediately checks that $S_\ell(z)$
 reduces to ${\mathcal F}_\ell(z)$ (cf. (1.12))
 with $B=A=1$. In a similar way, $\sigma_\ell(z)$
 is seen to reduce to
\\[.05ex]
\[
\Big (\frac{\sin \pi z}{\pi}\Big )^2 \Big \{
 \sum^{\ell-1}_{k=1} \frac{1}{(z-k)^2} + 
\frac{1}{z} + \frac{1}{\ell-z}\Big \}\,.
\]
\\[-1.20ex]
These reductions and the discussion in the 
first part of $\MSB$\S1.6$\MSA$ make it \emph{plausible} 
at least that one will have\vspace*{-1.25ex}
\[
\sigma_\ell (x)\, \leqq\, \chi_{[0,\ell]}(x)\, 
\leqq\, S_\ell(x) \vspace*{-.25ex}
\]
at every $x \in {\mathbb R}$. (Compare [Sel2, p.217 (lines 13, 
3, 10)].) \,For later use, notice 
too that $0\leqq K(x)\leqq 1$.

\begin{theorem}
The functions $B, b, W$ 
are entire and satisfy the following basic
 properties for real $x$:\\[.63ex]
(a) \quad $B(x)=W(x)+K(x), \, b(x)=W(x)-K(x)$\,;\\[.63ex]
(b) \quad $W(x)+W(-x)=0, \, B(x)+B(-x)=2K(x)$\,;\\[.63ex]
(c) \quad $b(x)\,\leqq \,\sgn(x)\,\leqq\, B(x)$\, (strictly if 
$x\notin {\mathbb Z})$\,;\\[.63ex]
(d) \quad $W(x) \in [1-K(x), 1]$ \,if\, $x>0$\,;\\[.63ex]
(e) \quad $W(x) \in [-1, -1+K(x)]$ \,if\, $x<0$\,;\\[.63ex]
(f) \quad $|b(x)-\sgn(x)|\leqq 2K(x)\;,\; 
|B(x)-\sgn(x)|\leqq 2K(x)$\,;\\[.63ex]
(g) \quad $\int^\infty_{-\infty}\, (\sgn(x)-b(x))\,dx =
 \int^\infty_{-\infty}\,(B(x)-\sgn(x))\,dx=1$\,.\\[.78ex]
For $z\in {\mathbb C}$, the magnitudes of the numbers
 $|B(z)|, |b(z)|$, and $|W(z)|$ are at most $O(1)\exp
(2\pi|\Imm(z)|)$.
\end{theorem}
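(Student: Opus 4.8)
The plan is to settle the algebraic assertions first, then the real‑line inequalities, and finally the complex growth bound. Each of the three bracketed series converges locally uniformly off $\mathbb Z$ and represents a function with at worst a double pole at every integer, while $(\sin\pi z)^{2}$ has a double zero there; writing $(\sin\pi z/\pi)^{2}=(z-m)^{2}\bigl(1+O((z-m)^{2})\bigr)$ near an integer $m$ and cancelling the (at most double) pole of the bracket shows $B,b,W$ are entire (and yields $B(0)=1$, $W(0)=0$ in passing). Part (a) is then pure algebra: $B(z)-W(z)$ and $W(z)-b(z)$ each reduce to $(\sin\pi z/\pi)^{2}$ times the single extra summand $z^{-2}$, which is $K(z)$ by (2.4). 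The substitution $z\mapsto-z$ interchanges the two double sums defining $W$ and reverses the sign of $2/z$ while fixing $(\sin\pi z/\pi)^{2}$, so $W$ is odd; since $K$ is even, $B(x)+B(-x)=\bigl(W(x)+W(-x)\bigr)+2K(x)=2K(x)$, and similarly $b(z)=-B(-z)$, which is (b).

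For (c)--(f) I would lean on (1.13). Rewriting $B$ in the forms (1.14${}_{a}$)--(1.14${}_{c}$) with $A=1$: for $x>0$, (1.14${}_{c}$) reads $B(x)=1+2(\sin\pi x/\pi)^{2}\bigl[x^{-1}-\sum_{n\ge1}(x+n)^{-2}\bigr]\ge\sgn(x)=1$ by the elementary estimate $\sum_{n\ge1}(\omega+n)^{-2}<\omega^{-1}$, strictly when $x\notin\mathbb Z$; for $x<0$, (1.14${}_{b}$) reduces $B(x)\ge-1$ to the strengthened estimate (1.15); $x=0$ is trivial. Hence $B\ge\sgn$ and $b(x)=-B(-x)\le\sgn(x)$ with the same strictness, giving (c). For (d), a second application of (1.13) gives, for $x>0$,
\[
W(x)=1-K(x)+2\Bigl(\tfrac{\sin\pi x}{\pi}\Bigr)^{2}\Bigl[\tfrac1x-\sum_{n\ge1}\tfrac1{(x+n)^{2}}\Bigr];
\]
positivity of the bracket yields $W(x)\ge1-K(x)$, while, after inserting $K(x)=(\sin\pi x/\pi)^{2}x^{-2}$, the inequality $W(x)\le1$ amounts to $\sum_{n\ge1}(x+n)^{-2}\ge x^{-1}-\tfrac12x^{-2}$. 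Part (e) is (d) read at $-x$ (using oddness of $W$ and evenness of $K$). Finally (d)--(e) say $|W(x)-\sgn(x)|\le K(x)$ for every real $x$, so by (a) $|B(x)-\sgn(x)|\le|W(x)-\sgn(x)|+K(x)\le2K(x)$ and likewise for $b$, which is (f).

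For (g), by (f) the odd function $W-\sgn$ is dominated by the integrable $2K$, so $\int_{\mathbb R}(W-\sgn)=0$; and $\int_{\mathbb R}K=1$ (from (2.4), $K$ is the Fourier transform of $\max\{0,1-|v|\}$, so $\int K=1$ by inversion), whence $\int(B-\sgn)=\int(W-\sgn)+\int K=1$ and $\int(\sgn-b)=\int(\sgn-W)+\int K=1$. For the growth bound it suffices to estimate $B$, since $b(z)=-B(-z)$, $W=B-K$, and $K(z)=(\sin\pi z/\pi z)^{2}$ is visibly $O(1)e^{2\pi|\Imm z|}$. Writing $B(z)=(\sin\pi z/\pi)^{2}\Sigma(z)$, one has $|(\sin\pi z/\pi)^{2}|\le\pi^{-2}\cosh^{2}(\pi\Imm z)\le\pi^{-2}e^{2\pi|\Imm z|}$ everywhere, while $|\Sigma(z)|=O(1)$ whenever $\operatorname{dist}(z,\mathbb Z)\ge\tfrac14$ (bound each series termwise by $\sum_{n\in\mathbb Z}|z-n|^{-2}$, uniformly bounded there); on the disk $|z-m|\le\tfrac14$ about an integer $m$ the desired bound already holds on the bounding circle, so the maximum principle — legitimate since $B$ is entire — carries it to the interior. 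Thus $|B(z)|=O(1)e^{2\pi|\Imm z|}$, and the same follows for $b$ and $W$.

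The hard part should be the upper bound $W(x)\le1$ in (d): it needs $\sum_{n\ge1}(\omega+n)^{-2}\ge\omega^{-1}-\tfrac12\omega^{-2}$ (equivalently $\psi'(\omega)\ge\omega^{-1}+\tfrac12\omega^{-2}$), which is \emph{not} implied by the crude $\sum<\omega^{-1}$ or even by (1.15); one derives it from the convexity comparison $\sum_{n\ge1}(\omega+n)^{-2}\ge\int_{1/2}^{\infty}(\omega+t)^{-2}\,dt=(\omega+\tfrac12)^{-1}$ and the elementary $(\omega+\tfrac12)^{-1}>\omega^{-1}-\tfrac12\omega^{-2}$. The only other delicate point is the behaviour near the integers in the growth estimate, where both factors of $B$ are singular; everything else is routine bookkeeping.
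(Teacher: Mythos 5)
Your treatment of (a)--(c), (e)--(g) and of the complex growth bound is correct and runs essentially parallel to the paper's (the paper gets the complex estimate from the asymptotics of $1-W(z)$ in (2.7) rather than your maximum-principle patch near the integers, and proves (g) by a reflection identity rather than by oddness of $W-\sgn$, but these are harmless variations). The genuine defect sits exactly where you flag the hard part, the upper bound $W(x)\leqq 1$ in (d). You need
\[
\sum_{n\geqq 1}\frac{1}{(\omega+n)^{2}}\;>\;\frac{1}{\omega}-\frac{1}{2\omega^{2}}\qquad(\omega>0),
\]
and you propose to derive it from $\sum_{n\geqq 1}(\omega+n)^{-2}\geqq\int_{1/2}^{\infty}(\omega+t)^{-2}\,dt=(\omega+\tfrac12)^{-1}$. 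That inequality is false, and convexity gives precisely the opposite: for a convex $f$ the midpoint rule \emph{under}estimates the integral, $f(n)\leqq\int_{n-1/2}^{n+1/2}f(t)\,dt$, so summing yields $\sum_{n\geqq1}(\omega+n)^{-2}\leqq(\omega+\tfrac12)^{-1}$. Numerically, at $\omega=1$ the sum is $\pi^{2}/6-1\approx 0.645$, while $(\omega+\tfrac12)^{-1}=2/3$. As written, therefore, the refinement of (1.15) is unproved, and with it (d), hence also (e) and (f), which you deduce from it.

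The repair is small and keeps your convexity idea, but as a trapezoidal rather than a midpoint comparison: with $f(t)=(\omega+t)^{-2}$ convex, $\int_{n}^{n+1}f\leqq\tfrac12\bigl(f(n)+f(n+1)\bigr)$, and summing over $n\geqq 0$ gives
\[
\frac{1}{\omega}=\int_{0}^{\infty}f(t)\,dt\;\leqq\;\tfrac12 f(0)+\sum_{n\geqq1}f(n)=\frac{1}{2\omega^{2}}+\sum_{n\geqq1}\frac{1}{(\omega+n)^{2}},
\]
with strict inequality since $f$ is not affine on any interval --- exactly the bound you need. This is the same refinement the paper obtains from Euler--Maclaurin with $f(t)=(t+\omega)^{-2}$; your second elementary step, $(\omega+\tfrac12)^{-1}>\omega^{-1}-\tfrac12\omega^{-2}$, then becomes unnecessary.
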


\begin{proof} 
Assertions (a) and (b) are trivial; (c) is 
nearly so upon utilizing 
relations (1.14) and (1.15).
To verify (d), one needs to exploit a 
refinement of (1.15); viz.,
\[
\frac{1}{\omega}-\frac{1}{2\omega^2} < 
\sum^\infty_{k=1} \frac{1}{(k+\omega)^2} <
 \frac{1}{\omega}\,\,\,.
\]
The left-hand portion of this is immediately recognized 
as a standard consequence of the Euler-Maclaurin 
summation formula with $f(t)=(t+\omega)^{-2}$.  
Cf. [Ste, pp.124(bot), 132(9)(10), 133(lines 6--20)] with
${\mathfrak m}=2\tau =2$. (Alternatively: see 
[Jor, pp.8(bot), 254(2), 255(6)(7), 261(4)] 
and [Malm, pp.58(8), 69(47)].)

Assertions (e) and (f) follow from (d) by means 
of (b) and (a), respectively. 

To address (g), 
one simply notes that:
\begin{align*}
\int^0_{-\infty} (B(x)+1)dx \,& =\, 
 \int^\infty_0 (B(-y)+1)dy\\
\,&=\, \int^\infty_0 (2K(y)-B(y)+1)dy\\
\,&=\, \int^\infty_{-\infty}K(y)dy-\int^\infty_0(B(y)-1)dy\MSB;\\[0.5ex]
\int_{{\mathbb R}}(\sgn(x)-b(x))dx 
\,& =\, \int_{{\mathbb R}}(\sgn(-y)-b(-y))dy\\
\,& =\,  \int_{{\mathbb R}}(B(y)-\sgn(y))dy\MSE\MSB.
\end{align*}   

It remains to control things for $z\in {\mathbb C}$.
For this, it is enough to look at \linebreak $|W(z)|$$\MSB$ 
and, then, only over   
$\{\Ree(z)\geqq 0\}\cap \{|z|\geqq \frac{1}{2}\}$. $\MSC$By 
virtue of (1.13),
\begin{equation}\label{2.7}
1-W(z)\,=\, 2 \Big (\frac{\sin \pi z}{\pi}\Big)^2 
\Big [\frac{1}{2z^2}+\sum^\infty_{n=1}
\frac{1}{(z+n)^2}-\frac{1}{z}\Big ]\,.
\end{equation}
To estimate the bracketed term, one can either use
 Euler-Maclaurin with $f(t)=(t+z)^{-2}$ and
 a suitably big ${\mathfrak m}=2\tau$ or 
else {\it Stirling's formula} coupled with the fact ([Erd, p.15 (2)]) 
that \vspace*{-0.5ex}
\[
\frac{d^2}{dz^2} \log \Gamma(z)=
\sum^\infty_{n=0} \frac{1}{(z+n)^2}\,\,\,.
\]
The bracket is quickly seen to be $O(|z|^{-3})$
 and we are done. \bx \end{proof}

The foregoing ideas are readily combined with 
a little calculation to yield
\\[.63ex]
{\small
\begin{equation}\label{2.8}
1-W(x)= \left \{\begin{array}{l}
\hspace{2.9em}\displaystyle \Big ( \frac{\sin \pi x}{\pi}\Big )^2 
\Big [\sum^N_{k=1} \frac{2B_{2k}}{x^{2k+1}} 
+ R_N(x)\Big ]\,, \MSC\MSC x > 0 \\
\\
\displaystyle K(x)\Big [1-2x+2\sum^\infty_{n=2}(-1)^n 
\zeta (n)(n-1)x^n \Big], \MSC\MSC 0\leqq x<1
\end{array}\right \}\,,
\end{equation}}
\\[.63ex]
wherein $B_\nu$ are the standard Bernoulli numbers
([Jor, Ste]), $N\geqq 0$, and the remainder
term $R_N(x)$ satisfies
\[ R_N(x) = 2\theta B_{2N+2} x^{-2N-3} \mbox{\,
 with\, } 0< \theta < 1\,.     
\MSA\footnote{When combined with the elementary formula
for $W(u)-W(u+m)$ ($m \geqq 1$), this estimate 
leads to a very efficient way of \emph{computing} $\MSA W(x)$ at 
any $x \in {\mathbb R}$. Compare [Ess, p.18 (24)], where 
the same idea is readily seen to work after breaking 
things up into partial fractions.}
\]

An analogous formula holds for $1-W(z)$ on ${\mathbb C}
 \cap \{|\Arg(z)|\leqq \pi-\delta\}$ modulo a minor
 revision in the part concerning $\theta$. Cf. 
(2.7) and [Erd, p.47 (1)(7)]. Restricting things
 to $\{|z|<1\}$ leads to the familiar expansion
 of $\csc^ 2(\pi z)$ near $z=0$ (cf. [Erd, p.51 (3)])
 and to the curious Taylor series development
\begin{equation}\label{2.9}
\frac{W(z)}{K(z)} = 2z + \sum^\infty_{m=1} 4m\, 
\zeta (2m+1)z^{2m+1}\,.
\end{equation}

\begin{theorem}
Given any $\ell>0$. The 
functions $\sigma_\ell$ and $S_\ell$ are entire, 
have magnitude $O(1)\exp(2\pi|\Imm(z)|)$, and satisfy
 the following basic properties for real $x$:\\[.75ex]
(a) \qquad $\sigma_\ell (x)\, \leqq \, \chi_{[0,\ell]}(x)\, 
\leqq \, S_\ell (x)$\,;\\[.75ex]
(b) \qquad $\max \{\chi_{[0,\ell]}(x)-\sigma_\ell(x), \: 
S_\ell(x)-\chi_{[0,\ell]}(x)\} \leqq K(x) 
+ K(\ell-x)$\,; \\[.75ex]
(c) \qquad $ \int^{\infty}_{-\infty} \, (\chi_{[0,\ell]}(x)
-\sigma_\ell (x))\,dx\,
 = \, \int^{\infty}_{-\infty} \, (S_\ell (x) 
- \chi_{[0,\ell]}(x))\,dx\, = 1 $.
\vspace*{0.35ex}
\end{theorem}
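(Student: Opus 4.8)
The plan is to reduce the entire statement to Theorem 2.6 by exploiting the defining relations $S_\ell(z)=\tfrac12[B(z)+B(\ell-z)]$ and $\sigma_\ell(z)=\tfrac12[b(z)+b(\ell-z)]$, together with the pointwise identity $\chi_{[0,\ell]}(x)=\tfrac12[\sgn(x)+\sgn(\ell-x)]$, which holds for every real $x$ except the two endpoints $x=0$ and $x=\ell$. Entireness of $S_\ell$ and $\sigma_\ell$ is then automatic, and because $z\mapsto\ell-z$ leaves $|\Imm(z)|$ unchanged, the estimate $|B(z)|,|b(z)|=O(1)\exp(2\pi|\Imm(z)|)$ furnished by Theorem 2.6 passes immediately to $S_\ell$ and $\sigma_\ell$.

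For (a), away from $x\in\{0,\ell\}$ I would apply Theorem 2.6(c) separately to $B(x)$ and $B(\ell-x)$, and to $b(x)$ and $b(\ell-x)$, then average, obtaining $\sigma_\ell(x)\le\tfrac12[\sgn(x)+\sgn(\ell-x)]=\chi_{[0,\ell]}(x)\le S_\ell(x)$. The two exceptional points require a separate check: using $B(0)=\mathcal F^\ast(0)=1$ (noted just after (1.14)) and $b(0)=-B(0)=-1$, together with $b(\ell)\le\sgn(\ell)=1\le B(\ell)$ from Theorem 2.6(c), one gets $S_\ell(0)=\tfrac12[1+B(\ell)]\ge1=\chi_{[0,\ell]}(0)$ and $\sigma_\ell(0)=\tfrac12[-1+b(\ell)]\le0\le\chi_{[0,\ell]}(0)$; the point $x=\ell$ then follows from the reflection symmetry $S_\ell(\ell-z)=S_\ell(z)$, $\sigma_\ell(\ell-z)=\sigma_\ell(z)$ built into the definitions.

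Assertion (b) is a quick corollary of (a): since $B(x)-b(x)=2K(x)$ for real $x$ by Theorem 2.6(a), one has $S_\ell(x)-\sigma_\ell(x)=K(x)+K(\ell-x)$ identically, and because (a) traps $\chi_{[0,\ell]}(x)$ between $\sigma_\ell(x)$ and $S_\ell(x)$, both of the differences $\chi_{[0,\ell]}-\sigma_\ell$ and $S_\ell-\chi_{[0,\ell]}$ are nonnegative and bounded above by $S_\ell-\sigma_\ell=K(x)+K(\ell-x)$.

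For (c), the inequality $0\le B(x)-\sgn(x)\le 2K(x)$ (Theorem 2.6(c),(f)) and the fact that $K$ is continuous with $K(x)=O(x^{-2})$ make $B-\sgn$ integrable over $\mathbb R$, hence $S_\ell-\chi_{[0,\ell]}$ is integrable; the same holds for $\chi_{[0,\ell]}-\sigma_\ell$. Using the a.e.\ identity for $\chi_{[0,\ell]}$ and the substitution $y=\ell-x$ in one of the two resulting integrals, I would write $\int_{\mathbb R}(S_\ell(x)-\chi_{[0,\ell]}(x))\,dx=\tfrac12\int_{\mathbb R}(B(x)-\sgn(x))\,dx+\tfrac12\int_{\mathbb R}(B(y)-\sgn(y))\,dy$, which equals $1$ by Theorem 2.6(g); the $\sigma_\ell$ computation is word-for-word the same with $b$ replacing $B$ and $\sgn-b$ replacing $B-\sgn$. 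The one place that genuinely wants care — essentially the only obstacle — is the endpoint bookkeeping in (a), where the averaging identity for $\chi_{[0,\ell]}$ fails; beyond that, the proof is a direct transcription of Theorem 2.6.
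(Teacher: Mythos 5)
Your proposal is correct and follows essentially the same route as the paper: both reduce everything to the basic properties of $B$ and $b$ (Theorem 2.2 in the paper's numbering, not 2.6) via the decomposition $\chi_{[0,\ell]}(x)=\tfrac12[\sgn(x)+\sgn(\ell-x)]$ valid off $x=0,\ell$, with (c) following from part (g) after the substitution $y=\ell-x$. Your only departures are cosmetic: you check the endpoints $x=0,\ell$ explicitly (the paper disposes of them tacitly, by continuity of $S_\ell,\sigma_\ell$), and you get (b) from (a) together with the identity $S_\ell-\sigma_\ell=K(x)+K(\ell-x)$ rather than applying the bound $|B-\sgn|,|b-\sgn|\leqq 2K$ directly to the decomposition, which amounts to the same thing.
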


\begin{proof}
It suffices to verify (a) for
$x \not= 0 \, , \,\ell$. 
Under this restriction,
\begin{equation}\label{2.10}
\chi_{[0,\ell]}(x) = \frac{1}{2} [\sgn(x)+\sgn(\ell-x)]
\end{equation}
holds and the desired estimate follows immediately
 from Theorem 2.2(c). Assertion (b) is proved similarly
 utilizing Theorem 2.2(f). Equation (c) follows directly
 from (2.10) and Theorem 2.2(g). \bx
\end{proof}

\begin{theorem} 
The entire functions 
$\{B, b, W, S_\ell, \sigma_\ell \}$ have type
 exactly equal to $2\pi$.
\end{theorem}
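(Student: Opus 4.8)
The plan is to show each of the five functions has exponential type at most $2\pi$ and at least $2\pi$, the former being essentially already in hand. For the upper bound, Theorem 2.1 already records that $|B(z)|,|b(z)|,|W(z)|$ are $O(1)\exp(2\pi|\Imm(z)|)$, and Theorem 2.3 does the same for $S_\ell$ and $\sigma_\ell$; since $S_\ell$ and $\sigma_\ell$ are built from $B,b$ by $z\mapsto \ell-z$ (which does not change $|\Imm(z)|$) this is immediate in any case. So the type of each is $\leqq 2\pi$. It remains only to rule out the type being strictly smaller, i.e.\ to produce, for each function, a sequence of points $z_n$ with $|z_n|\to\infty$ along which $|f(z_n)|\geqq c\MSA e^{(2\pi-\varepsilon)|\Imm(z_n)|}$ is violated for no $\varepsilon>0$ — equivalently, along which the growth genuinely achieves order $e^{2\pi|\Imm(z)|}$.

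The natural device is to look at the behaviour along the imaginary axis. First I would observe that $K(z)=(\sin\pi z/\pi z)^2$ has type exactly $2\pi$: indeed $|\sin(\pi i t)| = \sinh(\pi|t|)\sim \tfrac12 e^{\pi|t|}$ as $t\to\pm\infty$, so $K(it)=(\sinh\pi t/\pi t)^2 \sim e^{2\pi|t|}/(2\pi t)^2$, which forces the type of $K$ to be $\geqq 2\pi$ (the $t^{-2}$ factor is subexponential and irrelevant). Now for $W$: from the representation \eqref{2.7}, or directly from the definition \eqref{2.3}, the bracketed factor $\{\sum 1/(z-k)^2 - \sum 1/(z+n)^2 + 2/z\}$ tends to $0$ like $O(|z|^{-3})$ (established in the proof of Theorem 2.1) as $|z|\to\infty$ along the imaginary axis, whereas $(\sin\pi z/\pi)^2$ grows like $e^{2\pi|\Imm z|}$ up to a polynomial factor. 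Hence $|W(it)|$ decays only polynomially relative to $e^{2\pi|t|}$, so $W$ has type $\geqq 2\pi$, hence exactly $2\pi$. Since $B = W+K$ and $b = W-K$, and $W(it)$ is (for large real $t$) negligible compared to $K(it)$ — because $W/K\to$ a finite limit? here I must be a little careful — I would instead argue directly: $B(x)+B(-x)=2K(x)$ by Theorem 2.1(b), and this identity persists for complex arguments by analytic continuation, so $B(it)+B(-it) = 2K(it) \sim e^{2\pi|t|}/(2\pi t)^2$; therefore at least one of $|B(it)|,|B(-it)|$ is $\gtrsim e^{2\pi|t|}/|t|^2$, which already forces type $B \geqq 2\pi$. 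The same identity run through $b(z) = -B(-z)$ gives the statement for $b$.

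For $S_\ell$ and $\sigma_\ell$ I would use the analogous exact identities. From \eqref{2.5} and Theorem 2.1(b), $S_\ell(z) + S_\ell(-z) = \tfrac12[B(z)+B(-z)] + \tfrac12[B(\ell-z)+B(\ell+z)] = K(z) + [K(\ell-z)+K(\ell+z)]/1$ — more simply, $2S_\ell(z) = B(z) + B(\ell - z)$, and evaluating at $z = it$ with $t\to+\infty$: $B(\ell-it)$ grows like $e^{2\pi|t|}$ up to polynomial factors (by the $B$-case just proved applied at the shifted point, or again via $B(\ell-it)+B(it-\ell) = 2K(\ell - it) \sim e^{2\pi|t|}/(2\pi t)^2$). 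So $2S_\ell(it) = B(it) + B(\ell - it)$ is a sum of two terms each of which, taken with its reflected partner, is forced to size $e^{2\pi|t|}/|t|^2$; picking the sign of $t$ appropriately one concludes type $S_\ell \geqq 2\pi$. Identically for $\sigma_\ell$ via $\sigma_\ell(z) = \tfrac12[b(z)+b(\ell-z)]$ and $b(z) = -B(-z)$.

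The one genuinely delicate point — the part I would flag as the main obstacle — is making the lower-bound argument airtight, because a sum $B(it) + B(\ell - it)$ could in principle exhibit cancellation that kills the leading exponential term. The clean way around this is exactly the use of the \emph{exact} functional identities $B(z)+B(-z) = 2K(z)$ (and its translates/reflections), which are equalities of entire functions and hence cannot suffer from asymptotic cancellation: if $f(z) + f(-z) = 2K(z)$ and $K$ has type $2\pi$, then $f$ cannot have type $<2\pi$, since the type of a sum is at most the max of the types. Applying this principle to each of $B,b,W,S_\ell,\sigma_\ell$ in turn (using $W(z)+W(-z)=0$ is useless, so for $W$ one falls back on the $O(|z|^{-3})$ estimate for the bracket in \eqref{2.7}, which shows $1 - W(it)$ is negligible against $e^{2\pi|t|}$ so $W(it)$ itself is $\asymp K(it)\cdot(\text{const})$ up to polynomial factors — actually even simpler: $W = B - K$, and were $W$ of type $<2\pi$ then $B = W+K$ would have type $2\pi$ coming entirely from $K$, which is consistent, so for $W$ one does need the \eqref{2.7} estimate). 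Assembling: upper bound from Theorems 2.1 and 2.3; lower bound for $K$ by explicit evaluation on $i{\mathbb R}$; lower bound for $B,b,S_\ell,\sigma_\ell$ by the reflection identities reducing to the $K$ case; lower bound for $W$ from \eqref{2.7}. $\MSA\MSA\bx$
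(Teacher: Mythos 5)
Your upper bound and your treatment of $K$, $B$, $b$ are sound, and for those two functions your route is genuinely different from (and slicker than) the paper's: the paper gets everything by substituting $z=\beta+iy$ into the asymptotic development recorded after (2.8) (and into the formula for $K$), whereas you use the exact identity $B(z)+B(-z)=2K(z)$, continued to ${\mathbb C}$, plus the fact that the type of a sum is at most the larger of the two types; since $K(it)\sim e^{2\pi|t|}/(2\pi t)^2$ has type exactly $2\pi$, so do $B$ and $b(z)=-B(-z)$. The first gap is $W$. The estimate you invoke from (2.7) is only the \emph{upper} bound $O(|z|^{-3})$ for the bracket; it yields $|1-W(it)|\leqq C\,e^{2\pi|t|}/|t|^{3}$ and gives no lower bound at all, so the sentence ``hence $|W(it)|$ decays only polynomially relative to $e^{2\pi|t|}$'' does not follow. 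What is needed is the two-sided asymptotics, i.e. the development (2.8) (in its sector form), whose leading coefficient $2B_2/z^{3}=1/(3z^{3})$ is nonzero; that is precisely what the paper's proof substitutes $z=\beta+iy$ into, and it is the only place a genuine lower bound on $|1-W|$ enters. (Also, the Theorems you cite as 2.1 and 2.3 are 2.2 and 2.3 in the paper's numbering.)

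The more serious gap is $S_\ell,\sigma_\ell$. The identity you write, $S_\ell(z)+S_\ell(-z)=K(z)+[K(\ell-z)+K(\ell+z)]$, is false: Theorem 2.2(b) reflects about the origin, so it pairs $B(\ell-z)$ with $B(z-\ell)$, not with $B(\ell+z)$; what one actually gets is $S_\ell(z)+S_\ell(-z)=K(z)+\tfrac12[B(\ell-z)+B(\ell+z)]$, and the second bracket is a reflection about the point $\ell$, which Theorem 2.2(b) does not control. (A numerical check: at $z=0$, $\ell=1$, the left side is $2$ while your right side is $1$.) Your fallback --- that $B(it)$ and $B(\ell-it)$ are each individually large and one may ``pick the sign of $t$'' --- is exactly the cancellation problem you yourself flagged, and it is not hypothetical: from the asymptotics one finds $B(it)\approx e^{2\pi t}/(4\pi^2 t^2)$ while $B(\ell-it)\approx e^{2\pi i\ell}\,e^{2\pi t}/(4\pi^2 t^2)$, so for $\ell\in\frac12+{\mathbb Z}$ the leading exponential terms of $2S_\ell(it)=B(it)+B(\ell-it)$ cancel identically; and since $S_\ell$ is real on ${\mathbb R}$, $S_\ell(-it)=\overline{S_\ell(it)}$, so switching the sign of $t$ changes nothing. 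This is exactly the ``slight anomaly'' for $\ell\in\frac12+{\mathbb Z}$ that the paper's proof mentions; surviving it requires the finer expansion (substituting $z=\beta+iy$ into the development after (2.8) for both arguments, re-expanding powers of $(\ell-z)^{-1}$ in $z^{-1}$, and seeing that the next-order term does not vanish), or else the paper's alternate low-level route via the difference operator $T[f]=f(z+1)-f(z)$, which cannot raise type and reduces matters to explicit expressions of type $2\pi$. As written, your argument does not close for $S_\ell$ and $\sigma_\ell$.
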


\begin{proof} Simply put $z=\beta+iy$ ($y$ large) 
in the asymptotic development for $W(z)$ cited 
after (2.8) and
keep $\beta$ bounded; likewise for $K(z)$. Any 
negative powers of $\ell-z$ are best re-expressed 
as power series in $z^{-1}$. (When $\ell \in \frac{1}{2} 
+ {\mathbb Z}$, a slight anomaly occurs in the 
$\{\sigma_\ell, S_\ell\}$ asymptotics.)

Application of the difference operator  
$T[f]\equiv f(z+1)-f(z)$ immediately furnishes
 an alternate proof in the case of 
$\{B, b, W\}$. With a bit more effort, the same
low-level approach also works for 
$\sigma_\ell$ and $S_\ell$. \bx
\end{proof}

\subsection{} To streamline the development of the
 next two theorems, it is helpful to preface 
matters with a general result about entire
 functions of exponential \linebreak type.

\begin{lemma} 
Given any positive $p$ 
and $\tau$. Let $f(z)$ be an entire function of
 exponential type $\leqq \tau$. Suppose 
that $f(x) \in L_p({\mathbb R})$. Then:\\[.63ex]
(a)\quad\, $f(x) \in L_q({\mathbb R})$ for 
$q \in [\,p, \infty)$;\\[.63ex]
(b)\quad \,\,$ \int^\infty_{-\infty}|f(x+iy)|^p dx 
\leqq e^{p \tau |y|} \int^\infty_{-\infty}|f(x)|^p 
dx$  for $y \not= 0$\,;\\[.63ex]
(c) \quad $f(z) = o(1)$ on every horizontal strip
 $\{|\Imm(z)|\leqq \Delta\}$;\\[.63ex]
(d) \quad $ \sum^\infty_{n=1} |f(z+n)|^p$ converges
 uniformly on $\{0\leqq \Ree(z)\leqq 1,
 |\Imm(z)|\leqq \Delta\}$;\\[.84ex]
(e) \quad $f^{(k)}(x) \in L_p({\mathbb R})$
 for every $k \geqq 1$.
\end{lemma}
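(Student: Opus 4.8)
The plan is to single out (b) --- which is the Plancherel--P\'olya inequality --- as the one substantial ingredient, and then to read off (a), (c), (d), (e) from it by playing the subharmonicity of $|f|^p$ against the finiteness of $\|f\|_{L_p({\mathbb R})}$.

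For (b) itself I would proceed in three stages. First, upgrade the crude exponential-type bound $|f(z)|\leqq A_\epsilon\MSA e^{(\tau+\epsilon)|z|}$ to a Nikolskii-type estimate $\|f\|_{L_\infty({\mathbb R})}\leqq C(p,\tau)\|f\|_{L_p({\mathbb R})}$; for $1\leqq p\leqq 2$ this is immediate from Paley--Wiener and Hausdorff--Young, since then $\widehat f$ is supported on the fixed interval $|\xi|\leqq \tau/2\pi$ and lies in $L_{p'}$, hence in $L_1$, so $\|f\|_\infty\leqq\|\widehat f\|_1\leqq(\tau/\pi)^{1/p}\|f\|_p$; the ranges $p>2$ and $0<p<1$ are classical and can be cited. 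Second, a Phragm\'en--Lindel\"of argument (e.g. [Boa, Ch.~6]) applied in the two half-planes promotes this to $|f(z)|\leqq\|f\|_{L_\infty({\mathbb R})}\MSA e^{\tau|\Imm(z)|}$. Third --- the genuine Plancherel--P\'olya step --- set $g(z)=e^{i\tau z}f(z)$; then $g$ is analytic and bounded on $\overline{{\mathbb C}_+}$ with boundary trace in $L_p({\mathbb R})$, hence lies in the Hardy class $H_p({\mathbb C}_+)$ and satisfies $\int_{\mathbb R}|g(x+iy)|^p\,dx\leqq\int_{\mathbb R}|g(x)|^p\,dx$ for $y>0$; since $|g(x+iy)|=e^{-\tau y}|f(x+iy)|$ and $|g(x)|=|f(x)|$, this is exactly $\int_{\mathbb R}|f(x+iy)|^p\,dx\leqq e^{p\tau y}\int_{\mathbb R}|f(x)|^p\,dx$, and $e^{-i\tau z}f(z)$ handles $y<0$. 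In a technical report one would simply invoke Plancherel--P\'olya directly.

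Granting (b), the remainder is routine. For (a): the Nikolskii bound gives $f\in L_\infty({\mathbb R})$, whence $\int_{\mathbb R}|f|^q=\int_{\mathbb R}|f|^{q-p}|f|^p\leqq\|f\|_\infty^{q-p}\|f\|_p^p<\infty$ for $q\geqq p$. For (c) and (d): since $f$ is entire, $|f|^p$ is subharmonic, so $|f(z)|^p\leqq\tfrac{1}{\pi}\iint_{|w|<1}|f(z+w)|^p\,dA(w)$, and for $|\Imm(z)|\leqq\Delta$ the right side is $\leqq\tfrac{1}{\pi}\int_{|v|\leqq\Delta+1}\bigl(\int_{|x-\Ree(z)|<1}|f(x+iv)|^p\,dx\bigr)\,dv$; by (b) the inner integral is $\leqq e^{p\tau(\Delta+1)}\|f\|_p^p$ uniformly, while --- because $f(\cdot+iv)\in L_p({\mathbb R})$ --- it tends to $0$ as $\Ree(z)\to\pm\infty$, so dominated convergence yields (c); summing the analogous bound over the translates $z+n$, $n\geqq 1$, and using the same $L_p$-tail estimate gives the uniform convergence asserted in (d). For (e): $f'$ again has exponential type $\leqq\tau$, and Cauchy's formula on the unit circle gives $|f'(x)|\leqq\tfrac{1}{2\pi}\int_0^{2\pi}|f(x+e^{i\theta})|\,d\theta$; for $p\geqq 1$, Minkowski's integral inequality together with (b) produces $\|f'\|_p\leqq\bigl(\tfrac{1}{2\pi}\int_0^{2\pi}e^{\tau|\sin\theta|}\,d\theta\bigr)\|f\|_p<\infty$, and iterating handles $f^{(k)}$ for every $k\geqq 1$; for $0<p<1$ the same conclusion is the Bernstein inequality in $L_p$ and is again cited.

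The one real obstacle is (b): the Hardy-space reduction is clean only once $f$ is known to be bounded on ${\mathbb R}$, and the elementary Fourier route to that boundedness covers only $1\leqq p\leqq 2$, so the full strength of (b) --- and correspondingly of the $0<p<1$ portion of (e) --- ultimately rests on the classical literature (Plancherel--P\'olya, Nikolskii, Bernstein) rather than on anything one would want to reprove here.
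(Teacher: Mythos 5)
Your proposal is correct, and its skeleton matches the paper's: (b) is taken as the classical Plancherel--P\'olya input (the paper simply cites [PlP] and [Lev], exactly as you suggest doing "in a technical report"), and (a), (c), (d) are then read off from the subharmonicity of $|f|^p$ together with (b) and an $L_p$-tail/dominated-convergence argument, which is precisely the paper's "minor adaptation" and "more astute adaptation" of the same manipulation. The genuine divergence is in (e) and, to a lesser extent, (a). For (e) the paper reproduces the Plancherel--P\'olya difference-quotient trick: one applies the sub-mean-value inequality to $|g|^p$ with $g(z)=z^{-1}[f(x_0+z)-f(x_0)]$ on a circle of radius $\delta$, uses $|u+v|^p\leqq 2^p\max(|u|^p,|v|^p)$, integrates over $x_0$, invokes (b), and sets $\delta=\tau^{-1}$, obtaining $\int_{\mathbb R}|f'|^p\leqq 2(2e\tau)^p\int_{\mathbb R}|f|^p$. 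This is valid for \emph{every} $p>0$ and needs no Minkowski integral inequality, so it closes exactly the $0<p<1$ case that your Cauchy-formula-plus-Minkowski route cannot reach and for which you fall back on citing Bernstein in $L_p$; since the lemma is stated for arbitrary positive $p$, the paper's version of (e) is the more economical one to adopt. Similarly, your Hausdorff--Young/Nikolskii detour for (a) (restricted to $1\leqq p\leqq 2$) is unnecessary: the subharmonic mean-value display you yourself write down for (c), combined with (b), already gives $\sup_{\mathbb R}|f|\leqq O_\tau(1)\,\|f\|_p$ for all $p>0$, which is how the paper gets (a) "for free" after (c). So nothing in your argument fails, but adopting the difference-quotient step would make the whole lemma self-contained for all $p>0$ modulo the single citation for (b).
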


\begin{proof} Assertion (b) is a classical 
result of Plancherel and P\'{o}lya; 
see$\MSA$ [PlP, pp.120--124] and 
[Lev, pp.$\MSA$50, 51, 38 (theorem 2)]. 
(The proof hinges on sub-{\linebreak}harmonicity and some 
related Phragm\'{e}n-Lindel\"{o}f type estimates.) 
Straightforward use of the subharmonicity of $|f|^p$ 
then gives
\[
|f(z)| = O_\Delta (1) \Big \{ \int^\infty_{-\infty}
 |f(t)|^pdt\Big \}^{1/p}
\]
on every $\{|\Imm(z)|\leqq \Delta\}$. Cf. [Lev, p.51 (bot)].
 Assertion (c) follows by a minor adaptation of the same
 manipulation; cf. [Lev, p.138 (top)]. Assertion (a) 
is then obvious. 
By using a second (more astute!) adaptation, 
one obtains (d). Cf. [Lev, p.138 (bot)], [PlP, p.126 (top)], 
and [Boa, p.101 (lines 13--23)]. 

To establish (e), 
we follow [PlP, p.127]. It suffices to treat $k=1$.
 For $x_0 \in {\mathbb R}$, the function 
$g(z) \equiv z^{-1} [f(x_0+ z)-f(x_0)]$ 
is entire. Accordingly:
\begin{align*}
|g(0)|^p \,& \leqq \, \frac{1}{2\pi} \int^{2\pi}_0 
|g(\delta e^{i\phi})|^pd\phi\\[.31ex]
|f'(x_0)|^p \,& \leqq \, \frac{1}{2\pi\delta^p}
 \int^{2\pi}_0 |f(x_0+\delta e^{i\phi})-f(x_0)|^pd\phi\\[.63ex]
& \,\;\quad \{|u+v|^p \leqq 2^p \max (|u|^p, |v|^p)\}\\[.63ex]
|f'(x_0)|^p \,& \leqq \, \frac{2^p}{2\pi\delta^p}
 \int^{2\pi}_0 \Big[ |f(x_0+\delta e^{i\phi})|^p 
+ |f(x_0)|^p \Big]d\phi\\[.63ex]
|f'(x_0)|^p \,& \leqq \, \frac{2^{p+1}}{2\pi\delta^p}
 \int^{2\pi}_0 |f(x_0+\delta e^{i\phi})|^pd\phi \, .\\[-1.55ex]
\end{align*}
To conclude, one simply integrates over $x_0$ 
and applies (b). Upon taking $\delta=\tau^{-1}$,
 we find that
\[
\int^\infty_{-\infty} |f'(x)|^pdx \leqq 2(2e\tau)^p
 \int^\infty_{-\infty} |f(x)|^p dx\,.
\]
Compare [Boa, p.211 (bot)]. \bx
\end{proof}

\begin{theorem}
Let ${\mathcal M}^+$ 
be the set of all entire functions of exponential
 type $\leqq 2\pi$ which majorize $\sgn(x)$ along 
${\mathbb R}$. Let ${\mathcal M}^-$ be the 
counterpart for $f(x) \leqq sgn(x)$. We then have
\\[.63ex]
\[
\inf_{f\in {\mathcal M}^+} \int_{{\mathbb R}}
(f(x)-\sgn(x))dx = 1 = \inf_{h\in {\mathcal M}^-}
 \int_{{\mathbb R}} (\sgn(x)-h(x))dx\,.
\]
\\[.63ex]
The extremal functions are respectively $B(z)$ 
and $b(z)$; they are {\em unique}.
\end{theorem}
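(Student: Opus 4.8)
The plan is to prove the extremal property in two independent halves: an \emph{upper bound} on the infimum, and a matching \emph{lower bound} together with a rigidity statement forcing uniqueness. The upper bound is immediate: $B \in \mathcal{M}^+$ by Theorem~2.2(c), and Theorem~2.2(g) gives $\int_{\mathbb R}(B(x)-\sgn(x))\,dx = 1$, so $\inf_{f\in\mathcal M^+}\int_{\mathbb R}(f-\sgn) \leqq 1$; the case of $b$ and $\mathcal M^-$ is the mirror image via $f(x)\mapsto -f(-x)$, which carries $\mathcal M^+$ onto $\mathcal M^-$ and preserves exponential type, so I will only treat $\mathcal M^+$ from here on.

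**The lower bound.** First I would observe that there is nothing to prove unless $f-\sgn \in L_1(\mathbb R)$, so fix $f\in\mathcal M^+$ with $\int_{\mathbb R}(f-\sgn)<\infty$. The idea is to apply an interpolation/sampling identity at the \emph{integers}, which are precisely the points where $\sgn$ jumps or vanishes and where the candidate extremals touch $\sgn$. Consider $g(z) \equiv f(z) - B(z)$; it is entire of exponential type $\leqq 2\pi$, and along $\mathbb R$ one has $g(x) = (f(x)-\sgn(x)) - (B(x)-\sgn(x))$. Since $f(x)\geqq\sgn(x)$ and, by Theorem~2.2(f), $|B(x)-\sgn(x)|\leqq 2K(x)\to 0$, the function $g$ is bounded on $\mathbb R$ and, more importantly, $g\in L_1(\mathbb R)$ (difference of two $L_1$ functions). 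By Lemma~2.8(a) with $p=1$, $g\in L_q$ for all $q\in[1,\infty)$, in particular $g\in L_2$, so the Paley--Wiener representation (1.3) applies to $g$ with $\alpha=1$. Now I want to read off $\int_{\mathbb R} g(x)\,dx = \int_{\mathbb R}(f-\sgn) - 1$ as a value of the Fourier transform: $\int_{\mathbb R} g = \hat g(0)$, and by (1.3) with $\alpha=1$ this equals the value at $0$ of the $L_2$ density on $[-1,1]$. The crucial point is to show this density has a \emph{jump discontinuity at the endpoints} of size dictated by the $\sgn$ jumps that $f$ must dominate: because $f(n)\geqq\sgn(n)$ for all $n\in\mathbb Z$ with equality forced (in the limit) by $L_1$ summability, the sampled values $f(n)$ must agree with $\sgn(n)$ at the integers, and the sampling series forces the transform's endpoint behavior. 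Running the Poisson-type / sampling bookkeeping, one gets $\int_{\mathbb R}(f-\sgn) \geqq 1$, with equality iff $f(n)=\sgn(n)$ for all $n$ and the derivative data match those of $B$.

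**Uniqueness.** The cleanest route is: if $f\in\mathcal M^+$ with $\int_{\mathbb R}(f-\sgn)=1$, then $f(x)\geqq\sgn(x)$ forces $f(n)\geqq 0$ for $n\geqq 0$ and $f(n)\geqq -1\cdot\!\big(\text{no}\big)$—more precisely $f(n)\geqq \sgn(n)$ at every integer, while the equality in the integral, via the strengthened calculus estimate (1.15) exactly as in the proof of Theorem~2.2(d)--(f), pins $f(n)=\sgn(n)$ for every $n\in\mathbb Z\setminus\{0\}$ and $f'(n)=B'(n)$ there as well (the $L_1$-finiteness kills any slack, since a nonzero defect at even one node contributes a positive amount exceeding what the identity allows). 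Then $f-B$ is entire of exponential type $\leqq 2\pi$, lies in $L_2(\mathbb R)$, and vanishes to order $\geqq 2$ at every integer except possibly $0$, where one checks using ${\mathcal F}^\ast(0)=1$ and the matching of the $A=1$ coefficient that it also vanishes; the interpolation formula (1.11) with all data zero then gives $f\equiv B$. I expect the \textbf{main obstacle} to be the lower-bound step — specifically, making rigorous the passage from ``$f-\sgn\in L_1$ and $f\geqq\sgn$'' to ``$f$ samples $\sgn$ exactly at the integers,'' since $\sgn$ is discontinuous and the naive sampling formula (1.8)/(1.11) was derived under $L_2$-type hypotheses on $f$ itself, not on $f-\sgn$. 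The fix is to work throughout with the \emph{difference} $g=f-B$ (or $f-\sgn$ smoothed against $K$), for which Lemma~2.8 supplies all needed integrability and decay, and to extract the integral as $\hat g(0)$ via (1.3); the sign constraint $g\geqq -2K$ together with $\widehat{(1-K)}$ supported suitably then forces $\hat g(0)\geqq 0$, which is exactly $\int_{\mathbb R}(f-\sgn)\geqq 1$.
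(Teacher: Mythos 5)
Your upper bound and the reduction to $\mathcal{M}^+$ are fine, and your uniqueness sketch ends in the right place (the interpolation formula (1.11) applied to $f-B$, with the residual term $(\sin\pi z/\pi)^2\,c/z$ killed by $L_1$-ness). But the heart of the theorem is the lower bound $\int_{\mathbb R}(f(x)-\sgn(x))\,dx\geqq 1$, and that step is never actually proved: it is deferred to ``running the Poisson-type / sampling bookkeeping,'' and the one concrete mechanism you offer in its place is invalid. You claim that the sign constraint $g=f-B\geqq \sgn-B\geqq -2K$, together with $\hat g$ being supported in $[-1,1]$, ``forces $\hat g(0)\geqq 0$.'' It does not: take $g=-K$. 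Then $g\geqq -2K$, $\hat g$ is supported in $[-1,1]$, yet $\hat g(0)=-1<0$ (and correspondingly $f=B-K=W$ is not a majorant of $\sgn$). The relaxation $g\geqq -2K$ discards exactly the information that matters, namely that $f$ majorizes $\sgn$ at \emph{every} real point, in particular across the jump at $0$, not merely that $f\geqq B-2K$. Relatedly, your intermediate claim that $L_1$-summability alone forces $f(n)=\sgn(n)$ at the integers is false even for the extremal function itself, since $B(0)=1\neq 0=\sgn(0)$; what equality in the extremal problem forces is $f(n)=B(n)$, and that comes out of an identity you have not established, not out of integrability.

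What fills this gap in the paper is Beurling's integration by parts against the sawtooth $k(u)=u-\llbracket u\rrbracket-\frac12$, applied to $\theta=f-\sgn$. After using Lemma 2.6 (your ``Lemma 2.8'') to get $f'\in L_1\cap L_2$ and $f-B=o(1)$ on strips, one applies Paley--Wiener to $f'$ and notes that its transform, being continuous, vanishes at the endpoints $\pm1$; Poisson summation then gives $\sum_m f'(x+m)=\int_{\mathbb R} f'=2$, hence $\int_{\mathbb R} k(u)f'(y-u)\,du=0$, and one arrives at the identity $2k(y)=\sum_n\theta(y-n)-\int_{\mathbb R}\theta$ for $y\notin\mathbb Z$ (equation (2.12)). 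Letting $y\to 0^+$ and using $\theta\geqq 0$ yields $\int\theta\geqq 1$, and the equality analysis ($f(n)=B(n)$ for all $n$, $f'(n)=0$ for $n\neq 0$, $f'(0)\geqq 0$) drops out of the same identity before you ever invoke (1.11). If you want a genuinely Fourier-side proof along the lines you sketch, you must quantify how the endpoint behaviour of $\hat g$ interacts with the jump of $\sgn$ at $0$; after unwinding, that is essentially the same sawtooth/Poisson computation, and as written your argument does not reach the inequality.
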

\begin{proof}
Thanks to the transformation 
$h(z)=-f(-z)$, it is enough to look at ${\mathcal M}^+$.
 There is clearly no loss of generality if we 
also restrict ourselves to functions which satisfy
 $f(x)-\sgn(x)\in L_1({\mathbb R})$. The differences 
$f(z)-B(z)$ will \linebreak then satisfy the hypotheses of
 Lemma 2.6 with $(p,\tau)=(1, 2\pi)$. By combining 
this information with the asymptotic of $W(z)$
 mentioned after (2.8), we immediately see that 
$f(z)-B(z)= o(1)$ on every strip $\{|\Imm(z)|   
\leqq \Delta\}$ and \linebreak that
\begin{equation}\label{2.11}
\int^\infty_{-\infty} |f'(x)|dx \leqq 
\int^\infty_{-\infty} |f'-B'|dx + 
\int^\infty_{-\infty} |K'|dx+\int^\infty_{-\infty}
 |W'|dx<\infty\,.
\end{equation}
Likewise for $f^{(k)}(x)$ with $k \geqq 2$.

We now follow Beurling ([Beu]) and  
look at matters in the framework  
of a judiciously chosen integration by parts. 
To set the stage, we first write
\[
\theta(x)=f(x)-\sgn(x),\;\, \theta^\ast(x)=
B(x)-\sgn(x),\;\, k(x)= x -\llbracket x \rrbracket -\sfrac{1}{2}
\]
and recall that
\\[-1.26ex]
\[
k(x) = -\sum^\infty_{n=1} \frac{\sin(2\pi nx)}
{\pi n} \, \mbox{ \,for\, } x \notin {\mathbb Z}\,.
\]
\\[-.63ex]
The decomposition $\theta= \theta^\ast
+ (f-B)$ assures us that $\theta (x)=o(1)$. (Cf. Theorem 2.2(f)   
\`{a} propos $\theta^\ast(x)$.)

Fix any $y \notin {\mathbb Z}$. For large $N$
 and small $\varepsilon$, we then have:
\begin{align*}
&  \int^{N-\varepsilon}_{y+\varepsilon} 
k(u)d\theta(y-u) + \int^{N-\varepsilon}_{y+\varepsilon}
 \theta (y-u)dk(u) =
  \Big[k(u)\theta (y-u)
\Big ]^{N-\varepsilon}_{y+\varepsilon}\\[.63ex]
&  \int^{N-\varepsilon}_{y+\varepsilon}
 k(u)f'(y-u)(-1)du + \int^{N-\varepsilon}_{y+\varepsilon}
 \theta (y-u)du\\
&  \hspace{11em}= \sum_{y+\varepsilon < n<N}\theta(y-n)+
 \Big[k(u)\theta (y-u) 
\Big ]^{N-\varepsilon}_{y+\varepsilon}\,. 
\end{align*}
Upon letting $N \rightarrow \infty$,
\[
 -\int^\infty_{y+\varepsilon}k(u)
 f'(y-u)du + \int^\infty_{y+\varepsilon}
 \theta (y-u)du = -k(y+\varepsilon)
\theta(-\varepsilon)+ \sum_{n>y}\theta(y-n)\,. 
\]
In a similar way,
\[
 -\int^{y-\varepsilon}_{-\infty} k(u)f'(y-u)du +
 \int^{y-\varepsilon}_{-\infty} \theta
 (y-u)du = k(y-\varepsilon)\theta(\varepsilon)
+ \sum_{n<y}\theta(y-n)\,.
\]
Accordingly:
\[
\int_{|u-y|>\varepsilon} \Big( \hspace{-.63ex}-k(u)f'(y-u)
+\theta(y-u) \Big )\MSH du \MSA = \MSA \Big[k(y-v) \theta(v) 
\Big]^{\varepsilon}_{-\varepsilon}
+\sum^\infty_{-\infty} \theta(y-n)\,.
\]

By passing to the limit in $\varepsilon$, we conclude
that
\[
2k(y) = \sum^\infty_{-\infty} \theta(y-n)+
\int_{{\mathbb R}} k(u)f'(y-u)du -
 \int_{{\mathbb R}}\theta(v)dv\,.
\]
(The {\it format} of the final integral clearly depends 
crucially on the fact that  
$k'(u)=1$ almost everywhere.)

Observe now that the Paley-Wiener theorem applies
 to $f'(z)$. Cf. (1.3) with $\alpha=1$. Since 
$f'(x)\in L_2 \cap L_1$, its Fourier transform is
 continuous on all of ${\mathbb R}$. 
As such, the ``{\it g\,}'' in (1.3) necessarily 
{\it vanishes} at the endpoints.\footnote{Cf. 
[Z2, pp.249$\MSD$(para 4), 250$\MSD$(2.17)]$\MSH$; also 
[Z1, p.26 (11.6)].} 
By virtue of the Poisson 
summation formula (1.2), we therefore have
\[
\sum^\infty_{m=-\infty} f'(x+m)=  
\int^\infty_{-\infty} f'(y)dy + 0 \MSC=
\MSC 2\MSB,\vspace*{1ex}
\]
from which it follows that
\[
\int_{{\mathbb R}} k(u)f'(y-u)du=\int^1_0 k(u)\Big[
 \sum^\infty_{-\infty} f'(y-u+m)\Big] du=0\,.
\]
In other words:
\begin{equation}\label{2.12}
2k(y)= \sum^\infty_{-\infty} \theta(y-n)-\int_{{\mathbb R}}
\theta(v)dv\,, \; y\notin {\mathbb Z}\,.
\end{equation}

Since $\theta(v) \geqq 0$, making $y\rightarrow 0^+$ yields
\[
-1 = [\mbox{non-negative number}]
-\int_{{\mathbb R}}\theta(v)dv\,;
\]
i.e., $\int_{{\mathbb R}}\theta(v)dv \geqq 1$.
Equality holds {\it only if}
\[
\lim_{y\rightarrow 0^+} \,\sum^N_{-N} \theta(y-n)=0
\]
for each large $N$. This necessitates that 
$f(n)=B(n)$ for each fixed $n$. 
Since $f(x)-\sgn(x) = \theta(x) \geqq 0$,
 we would also need to have $f'(n)=0, \, n\neq 0$, and
 $f'(0) \geqq 0$. Utilizing (1.11)\,\footnote{or, even 
better, the $\{ G_1,G_2 \}$ formalism in \S 1.5 for 
$f(z)-B(z)$  }, we finally deduce that
\\[.21ex]
\[
f(z) - B(z) = \Big (\frac{\sin \pi z}
{\pi}\Big )^2 \frac{c}{z}
\]
\\[.21ex]
with $c=f'(0)-B'(0)=f'(0)-2$. Since $f-B 
\in L_1({\mathbb R})$, the constant $c$ 
must be zero; hence $f(z) \equiv B(z)$. \bx 
\end{proof}

Though Theorem 2.7 is due to Beurling, the 
following one is perhaps best attributed 
to Selberg. (Cf. [Sel2, p.226 (lines 5--10)].)

\begin{theorem} Let $\ell$ be a 
positive integer and ${\mathcal M}^\pm_\ell$
 be the counterpart of ${\mathcal M}^\pm$ 
for $\chi_{[0,\ell]}(x)\,$. We then have
\\[.63ex]
\[
\inf_{f\in {\mathcal M}^+_\ell} \int_{{\mathbb R}}
 (f(x)-\chi_{[0,\ell]}(x))dx = 1 
= \inf_{h\in {\mathcal M}^-_\ell}\int_{{\mathbb R}}
 (\chi_{[0,\ell]}(x)-h(x))dx\,.
\]
\\[.63ex]
The functions $S_\ell(z)$ and $\sigma_\ell(z)$ 
are extremal, but they are {\em not} unique. 
\end{theorem}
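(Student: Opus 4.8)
The plan is to derive the lower bound $\int_{\mathbb R}(f-\chi_{[0,\ell]})\,dx\ge 1$ for $f\in{\mathcal M}^+_\ell$ by the same Beurling-type integration-by-parts that powered Theorem 2.7, and then to exhibit an explicit one-parameter family of extremal majorants to see that uniqueness fails. First I would reduce to $f(x)-\chi_{[0,\ell]}(x)\in L_1({\mathbb R})$, so that $f(z)-S_\ell(z)$ falls under Lemma 2.6 with $(p,\tau)=(1,2\pi)$; combined with the asymptotics of $W$ (hence of $S_\ell$) recorded after (2.8), this gives $f(z)-S_\ell(z)=o(1)$ on horizontal strips and $f^{(k)}\in L_1({\mathbb R})$ for $k\ge1$. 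Setting $\theta(x)=f(x)-\chi_{[0,\ell]}(x)\ge0$, the key point is that by Lemma 2.6(a) and Theorem 2.2 one has $\theta(x)=o(1)$, and that $f'\in L_1\cap L_2$, so Poisson summation (1.2) applies to $f'$ with the transform vanishing at $\pm1$, giving $\sum_m f'(x+m)=\int_{\mathbb R}f'=0$ (the total mass of $\chi_{[0,\ell]}'$ as a distribution). Running the integration-by-parts against $k(u)=u-\llbracket u\rrbracket-\tfrac12$ exactly as in the proof of Theorem 2.7, but now with the two jump points $0$ and $\ell$, produces the identity
\[
\sum_{n=-\infty}^{\infty}\theta(y-n)-\int_{\mathbb R}\theta(v)\,dv \;=\; 2k(y)+2k(\ell-y)\;=\;2\bigl(k(y)+k(-y)\bigr)\qquad(y\notin{\mathbb Z}),
\]
where I have used $\ell\in{\mathbb Z}^+$ so that $k(\ell-y)=k(-y)$. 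Letting $y\to0^+$ the right side tends to $2(-\tfrac12-\tfrac12)=-2$, while the sum on the left is $\ge0$; hence $\int_{\mathbb R}\theta\ge1$. (That $S_\ell$ achieves the value $1$ is Theorem 2.5(c), and the case of ${\mathcal M}^-_\ell$ follows from $h(z)\mapsto\chi_{[0,\ell]}(-z)-h(-z)$, i.e. the reflection $z\mapsto \ell-z$ together with $\sigma_\ell(\ell-z)=\sigma_\ell(z)$ up to the obvious symmetry, or simply by repeating the argument.)

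For non-uniqueness, I would contrast the endgame with Theorem 2.7. There, equality forced $f(n)=B(n)$ and $f'(n)=0$ for $n\ne0$, and then (1.11) pinned $f$ down completely because the $B$-nodes fill all of ${\mathbb Z}$. Here equality forces $\lim_{y\to0^+}\sum_{|n|\le N}\theta(y-n)=0$, whence $\theta(n)=0$ and (by non-negativity of $\theta$) $\theta'(n)=0$ for every integer $n\notin\{0,\ell\}$, together with the one-sided conditions $\theta'(0^{})\ge0$ in the appropriate sense and $\theta'(\ell)\le0$; but the values $f'(0)$ and $f'(\ell)$ are \emph{not} determined. Concretely, using the $\{G_1,G_2\}$ formalism of \S1.5 applied to $f(z)-S_\ell(z)$ (an entire function of type $\le2\pi$ that, together with its derivative, vanishes at every integer $\notin\{0,\ell\}$), one gets
\[
f(z)-S_\ell(z)=\Bigl(\frac{\sin\pi z}{\pi}\Bigr)^{2}\Bigl(\frac{a}{z}+\frac{b}{z-\ell}\Bigr)
\]
for constants $a,b$; the $L_1$-condition on the difference forces $a+b=0$ (so the tail is $O(x^{-2})$, integrable), but \emph{any} choice $a=-b=t$ with $t$ in a nonempty interval keeps $f(x)\ge\chi_{[0,\ell]}(x)$ and preserves the integral, since the perturbation $t\bigl(\tfrac{\sin\pi x}{\pi}\bigr)^2\bigl(\tfrac1x-\tfrac1{x-\ell}\bigr)$ has integral $0$ and, for small enough $|t|$ of the right sign (indeed, one can check a genuine interval is allowed), stays $\ge0$ where $\chi_{[0,\ell]}=1$ and does not drop $f$ below $0$ elsewhere near the nodes. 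Thus the extremal majorant is a nondegenerate family, not a single function.

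The main obstacle I anticipate is the bookkeeping in the integration-by-parts with \emph{two} singular points of $\chi_{[0,\ell]}$: one must split $\int_{\mathbb R}$ at both $y$-shifted jumps, track the boundary terms $[k(u)\theta(y-u)]$ at $u=0$ and $u=\ell$ as $\varepsilon\to0$, and verify that the $f'$-integral against $k$ again kills itself via the periodization $\sum_m f'(y-u+m)=0$ — this last step being where integrality of $\ell$ and the vanishing of the Paley–Wiener density at $\pm1$ are both used. The positivity half (lower bound) is then automatic from $\theta\ge0$; the delicate part of non-uniqueness is checking that a genuinely \emph{nonempty} range of the parameter $t$ preserves the majorization $f\ge\chi_{[0,\ell]}$, which reduces to the elementary calculus estimate (1.15) applied near $x=0$ and $x=\ell$ — essentially the observation, already flagged in \S1.6 and \S2.1, that with $A=B=1$ there is slack in the $S_\ell$ construction that was absent for $B$.
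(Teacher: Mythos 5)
Your lower-bound step fails as written. With $\theta=f-\chi_{[0,\ell]}$ and $\ell\in{\mathbb Z}^+$, the two jump contributions in the Beurling integration by parts \emph{cancel}: the jumps of $\chi_{[0,\ell]}$ at $0$ and $\ell$ have opposite signs (each of size $1$, not $2$), and $k$ is $1$-periodic, so the boundary term is $k(y-\ell)-k(y)=0$. Equivalently, since $f-\chi_{[0,\ell]}\in L_1$ forces $f\in L_1$ here, Poisson summation gives $\sum_n f(y-n)=\hat f(0)$ while $\sum_n\chi_{[0,\ell]}(y-n)=\ell$ for $y\notin{\mathbb Z}$, so your displayed identity reduces to the vacuous statement $0=0$. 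Your nonzero evaluation of the right side rests on the slip $k(0^-)=-\tfrac12$; in fact $k(0^-)=+\tfrac12$, so $2k(y)+2k(\ell-y)\rightarrow 0$ (indeed it is identically $0$ off ${\mathbb Z}$), and the inequality $\int\theta\geqq 1$ does not follow. The bound is recovered only by exploiting the integer points themselves: either evaluate the periodization at $y=0$, or observe that as $y\rightarrow 0^+$ the single term $\theta(y+\ell)\rightarrow f(\ell)\geqq\chi_{[0,\ell]}(\ell)=1$ while all other terms are $\geqq 0$. That is precisely the paper's (much shorter) proof: Poisson summation is applied to $f$ itself (not $f'$), and putting $x=0$ gives $\hat f(0)=\sum_m f(m)\geqq\ell+1$, because $\chi_{[0,\ell]}$ equals $1$ at the $\ell+1$ integers of $[0,\ell]$; hence $\int(f-\chi_{[0,\ell]})\geqq 1$.

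Your reduction of ${\mathcal M}^-_\ell$ is also incorrect: $\chi_{[0,\ell]}(-z)-h(-z)$ is not entire, and the reflection $z\mapsto\ell-z$ maps minorants to minorants, so, unlike the $\sgn$ case, no symmetry exchanges the two classes. Repeating the argument does work, but it needs one input your sketch omits: continuity forces $h(0)\leqq 0$ and $h(\ell)\leqq 0$ (approach the endpoints from outside $[0,\ell]$), whence $\hat h(0)=\sum_m h(m)\leqq(\ell-1)$ and the deficiency is $\geqq 1$. Your non-uniqueness discussion, by contrast, is essentially the paper's (2.13): the equality conditions determine $F$ only up to $\eta\,(\sin\pi z/\pi)^2\,\ell/(z(\ell-z))=\eta\,(\sin\pi z/\pi)^2\,(1/z-1/(z-\ell))$, which has zero integral and remains admissible for all sufficiently small $\eta\neq 0$; like the paper, you leave that last verification at the level of a calculus check near the nodes, which is acceptable.
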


\begin{proof} We begin with ${\mathcal M}^+_\ell$ 
and again restrict ourselves to $f$ which also 
satisfy $\|f-\chi_{[0,\ell]}\|_1<\infty$. $\MSB$Just 
as in the proof 
of Theorem 2.7 \vspace*{-.62ex}
(but with $f'\MSB \hookrightarrow \MSA f$), 
\[
\sum^\infty_{m=-\infty} f(x+m)=\hat{f}(0)+0\,.
\]
Putting $x=0$ immediately gives $\hat{f}(0)
\geqq \ell+1$. By Theorem 2.3(c), equality holds
 for $f=S_\ell$. Any $\MSZ$\emph{other} extremal function 
$F$ necessarily satisfies
\[
F(m) = \chi_{[0,\ell]}(m)\quad \mbox{ and }\;\quad 
F'(k)=0\, \mbox{ if }\, k\neq 0,\ell\,\,.
\]
By (1.11) and $F \in L_1$, we then get
\begin{equation}\label{2.13}
F(z) = S_\ell (z) + \eta \Big (\frac{\sin \pi z}{\pi}
\Big )^2 \frac{\ell}{z(\ell-z)}\;\,,
\end{equation}
wherein $\eta = F'(0) - S'_\ell (0) = F'(0)-1$. 
 Nonuniqueness stems from the fact that the right-hand
 side of (2.13) continues to lie in 
${\mathcal M}^+_\ell$ for any sufficiently
 small $\eta \in {\mathbb R}-\{0\}$. Cf. 
[Sel2, p.217 (line 3)]. Observe too that, 
for $\ell \in {\mathbb Z}$\,,
\[
\int^\infty_{-\infty} \Big (\frac{\sin \pi x}
{\pi} \Big )^2 \frac{\ell}{x(\ell-x)} dx 
= \lim_{R\rightarrow \infty} \int^R_{-R} 
\Big (\frac{\sin \pi x}{\pi} \Big )^2 \Big [
 \frac{1}{x} + \frac{1}{\ell-x}\Big ] dx=0\,.
\]
The analysis for ${\mathcal M}^-_{\ell}$ runs 
similarly; see [Sel2, p.217 (line 10)] for the 
pertinent $\eta$-condition. \,\bx
\end{proof}

If $\MSB\ell \notin {\mathbb Z}\MSB$, the qualitative 
reasoning in [Sel2, pp.218 (bot), 219 (top)] immediately 
shows that the functions $S_\ell$ and $\sigma_\ell$ 
are \emph{no longer} extremal: $\MSA$one already does better, 
in fact, with certain multiples 
$\MSA(1-\varepsilon)S_\ell (z)\MSA$ and $\MSA(1+u)
\sigma_\ell(z)\MSA$. (Cf. the second clause of Theorem 2.2(c).
Take $u = -1$ if $0< \ell <1$.)

For further information on this, consult
[DL, Log]\,\footnote{in the case of ${\mathcal M}^+_\ell$} 
and the two graphs given in [Sel2, p.219]. The fact
 that $S_\ell$ and $\sigma_\ell$ are suboptimal
 in regard to $L_1$ will turn out to be of
 relatively little consequence for subsequent purposes.

\begin{remark} In constructive function theory,
 approximants (of various $L_p$ types) for a
 given function $m(x)$ along ${\mathbb R}$ are 
often obtained simply by taking \linebreak convolutions with a 
rescaled Fej\'{e}r kernel $\alpha K(\alpha z)$ or 
similar. See [Tim, Ach] and, e.g., [Kat, p.$\MSA$125 
(theorem 1.10)]. Since $\{B, b, W, S_\ell, \sigma_\ell\}$ 
are basically \linebreak \emph{defined} via  
(1.11), they are not really amenable to being 
re-interpreted in a natural way as convolutions 
over ${\mathbb R}$ of the aforementioned special type. 
The formal $L_2$ expansions for $g(v)$ 
and $g(v-\alpha)$ derived in \S1.5 offer 
valuable insight on this last point.
\end{remark}

\subsection{} We close \S2 by determining what
 the analog of representation (1.3) is for
 the function $W(z)$. Since \vspace*{-.62ex}
\begin{equation}\label{2.14}
\sgn(x) = \int^\infty_{-\infty} \frac{1}
{\pi v}\sin (2\pi xv)dv\,,
\end{equation}
there is a natural suspicion that
\begin{equation}\label{2.15}
W(z) = \int^1_{-1} \frac{Q(v)}{v} \sin (2\pi zv)dv
\end{equation}
holds for some nice, even, continuous function
 $Q$ satisfying $Q(0)=1/\pi$.

Following the hint provided by the proof of 
Theorem 2.7, we first pass to $W'(z)$ and write
\[
W'(z) = \int^\infty_{-\infty} A(v)\cos(2\pi zv)dv
\]
with an even continuous $A(v)$ having support
 $\subseteqq [-1, 1]$. 
The asymptotics of $W(z)$ cited near (2.8)
 assure us that $W'(z)=O[(1+|z|)^{-3}]$ on every 
strip $\{|\Imm(z)|\leqq \Delta\}$. 
By Fourier inversion,
\[
A(v) = \int^\infty_{-\infty} W'(x)\cos (2\pi xv)dx\,.
\]
Letting $v=0$ gives $A(0)=2$. Since
\[
A'(v)=-2\pi \int^\infty_{-\infty} xW'(x)\sin (2\pi xv)dx\,,
\]
the function $A$ necessarily belongs to 
$C^1({\mathbb R})$. In view of the restriction on $\supp (A)$,
\\[-1.26ex]
\[
A(1)=A'(1)=0\,.\vspace*{-3.5ex}
\]
\\

We now write $W(x)=\int^x_0 W'(t)dt$ and exploit Fubini's 
theorem. This gives
\begin{align*}
W(x) \,& = \, \int^x_0 \int^1_{-1} A(v) 
\cos (2\pi tv)dv dt\\
\,& = \, \int^1_{-1} \int^x_0  A(v) \cos (2\pi tv)dt dv\\
\,& = \, \int^1_{-1} A(v) \frac{\sin (2\pi xv)}{2\pi v} dv\,,
\end{align*}
from which it is evident that 
\[
Q(v)=\frac{1}{2\pi}A(v)
\]
works in (2.15). (That $Q$ is {\it unique} in (2.15) follows
 immediately from Leibnitz's rule and the Plancherel theorem.)

\begin{theorem}
The function $W(z)$ 
is representable in the form (2.15) with
\[
Q(v) = \frac{1}{\pi} |v| + (1-|v|)\MSA v \ctn (\pi v)\,.
\]
Extending $Q$ to be zero off $[-1, 1]$ produces an \emph{even}
 $C^1({\mathbb R})$ function (call it $q$) which satisfies
\[
\left \{ \begin{array}{l}
\; \hspace{2em} q(0) = \frac{1}{\pi}, \quad q(1)=0, \quad q'(1)=0\\
\\
q''(0\mspace{1mu}\pm)= -\frac{2}{3} \pi, \;\; q''(1-)=
\frac{2}{3}\pi, \;\; q''(1+)=0\\
\\
\; \hspace{2.9em} q'(v) < 0 \quad\MSC 
\mbox{ for }\MSC\quad 0 < v < 1
\end{array}\right \}\,.
\]
\end{theorem}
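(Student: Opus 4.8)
The plan is to split the argument into identifying the explicit formula for $Q$ and then reading off the properties of $q$ from it. For the first part I would build on the reduction already in place above: (2.15) holds with a unique $Q$, namely $Q=\frac{1}{2\pi}A$, where $A$ is the even, $C^{1}$, $[-1,1]$-supported cosine transform of $W'$. Integrating by parts once, and using that $W'=(W-\sgn)'$ on $(0,\infty)$ together with the oddness of $W-\sgn$ and its boundary values $(W-\sgn)(0^{\pm})=\mp1$, one finds $A(v)=2+2\pi v\int_{\mathbb{R}}(W(x)-\sgn(x))\sin(2\pi vx)\,dx=2+4\pi v\int_{0}^{\infty}(W-\sgn)(x)\sin(2\pi vx)\,dx$. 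On $(0,\infty)$, formula (2.7) gives $W(x)-\sgn(x)=-2\bigl(\frac{\sin\pi x}{\pi}\bigr)^{2}\bigl[\frac{1}{2x^{2}}+\sum_{n\geq1}\frac{1}{(x+n)^{2}}-\frac{1}{x}\bigr]$.

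Next I would substitute the Laplace representations $\frac{1}{x^{2}}=\int_{0}^{\infty}te^{-xt}\,dt$ and $\sum_{n\geq1}\frac{1}{(x+n)^{2}}=\int_{0}^{\infty}\frac{te^{-xt}}{e^{t}-1}\,dt$, so the bracket becomes $\int_{0}^{\infty}e^{-xt}\bigl(\frac{t}{e^{t}-1}+\frac{t}{2}-1\bigr)\,dt$, and interchange the $x$- and $t$-integrations. The inner integral $\int_{0}^{\infty}\sin^{2}(\pi x)\,e^{-xt}\sin(2\pi vx)\,dx$ is elementary — expand $\sin^{2}$ and $\cos(2\pi x)\sin(2\pi vx)$ into first powers — and yields a combination of the kernels $\frac{1}{t^{2}+4\pi^{2}w^{2}}$, $w=v,\,v+1,\,1-v$, with coefficients whose leading (and next) orders cancel: $v-\frac{v+1}{2}-\frac{v-1}{2}=0$. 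That vanishing is exactly what makes Fubini legitimate (it forces the combined kernel to decay like $t^{-4}$, defeating the linear growth of $\frac{t}{e^{t}-1}+\frac{t}{2}-1$) and, after rescaling $t=2\pi s$, what makes the divergent logarithmic parts of the individual $t$-integrals cancel; the convergent parts are matched against Binet's second integral formula $\int_{0}^{\infty}\frac{s\,ds}{(e^{2\pi s}-1)(s^{2}+w^{2})}=\frac{1}{2}\bigl(\log w-\frac{1}{2w}-\psi(w)\bigr)$. Using $\psi(v+1)=\psi(v)+\frac{1}{v}$ and $\psi(1-v)=\psi(v)+\pi\cot(\pi v)$, all the $\psi$- and $\log$-terms collapse, leaving $A(v)=2v+2\pi v(1-v)\cot(\pi v)$ on $(0,1)$; by evenness this is $2|v|+2\pi(1-|v|)v\cot(\pi v)$, i.e. $Q(v)=\frac{|v|}{\pi}+(1-|v|)v\cot(\pi v)$. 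This bookkeeping — the Fubini justification and the clean collapse of the transcendental terms — is where essentially all the work is.

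With the formula for $Q$ (hence $q$) in hand, the rest is routine. Using $v\cot(\pi v)=\frac{1}{\pi}-\frac{\pi}{3}v^{2}-\frac{\pi^{3}}{45}v^{4}-\cdots$ and noting that the corner of $\frac{|v|}{\pi}$ at $0$ is cancelled by the $|v|$-term inside $(1-|v|)v\cot(\pi v)$, one gets $q(v)=\frac{1}{\pi}-\frac{\pi}{3}v^{2}+O(|v|^{3})$ near $0$, so $q(0)=\frac{1}{\pi}$ and $q''(0\pm)=-\frac{2}{3}\pi$. Putting $v=1-s$ and using $\cot(\pi-\pi s)=-\cot(\pi s)$ gives $q(v)=\frac{\pi}{3}(1-v)^{2}+O((1-v)^{3})$, whence $q(1)=0$, $q'(1-)=0$, $q''(1-)=\frac{2}{3}\pi$; since $q\equiv0$ for $|v|\geq1$ we have $q'(1+)=q''(1+)=0$, and these one-sided matchings together with analyticity of $q$ on $(-1,0)\cup(0,1)$ give $q\in C^{1}(\mathbb{R})$.

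Finally, for $q'(v)<0$ on $(0,1)$, write $g(v)=\frac{v}{\pi}+v(1-v)\cot(\pi v)$ (this is $q$ there), compute $g'(v)=\frac{1}{\pi}+(1-2v)\cot(\pi v)-\pi v(1-v)\csc^{2}(\pi v)$, multiply by $2\pi\sin^{2}(\pi v)>0$, and substitute $t=2\pi v$: the claim becomes $F(t):=1-\cos t+(\pi-t)\sin t-\pi t+\frac{t^{2}}{2}<0$ for $t\in(0,2\pi)$. Here $F(0)=F(2\pi)=0$, while $F'(t)=(\pi-t)(\cos t-1)$ is negative on $(0,\pi)$ and positive on $(\pi,2\pi)$; so $F$ strictly decreases to $F(\pi)=2-\frac{\pi^{2}}{2}<0$ and then strictly increases back to $0$, forcing $F<0$ on $(0,2\pi)$. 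I expect the Fourier-analytic identification of $Q$ in the first two paragraphs to be the only real obstacle; everything after it is essentially calculus.
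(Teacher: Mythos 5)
Your proposal is correct, but it identifies $Q$ by a genuinely different route than the paper. The paper never leaves elementary Fourier analysis: it writes $W$ as the uniform limit of its truncated partial-fraction expansion, replaces each term $K(z-m)$ and $zK(z)$ by the explicit transforms of the triangle kernel from \S 1.2, sums the finite series $\sum_{|m|\leqq N}(1-|v|)\sgn(m)e^{2\pi imv}$ in closed form (which is where the $\ctn(\pi v)$ appears), and kills the residual oscillatory term with the Riemann--Lebesgue lemma; this yields (2.15) with the stated $Q$ in one stroke. You instead exploit the reduction $Q=\frac{1}{2\pi}A$ and compute the cosine transform $A$ of $W'$ from (2.7) via Laplace representations of $x^{-2}$ and $\sum_{n\geqq 1}(x+n)^{-2}$, Binet's second formula for $\psi$, and the recurrence and reflection formulas for the digamma function; I checked the bookkeeping and the $\log$- and $\psi$-terms do collapse exactly as you assert, giving $A(v)=2v+2\pi v(1-v)\ctn(\pi v)$ on $(0,1)$. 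Your route is heavier (it imports special-function identities the paper avoids) but it makes good use of the asymptotic identity (2.7) already on record; the paper's route is shorter and self-contained. One small calibration: the interchange of the $x$- and $t$-integrations is already justified by absolute convergence (the inner $t$-integral is $O(x^{-2})$ near $0$, which the factor $\sin^2(\pi x)$ tames, and $O(x^{-3})$ at infinity), so the coefficient cancellation $v-\frac{v+1}{2}-\frac{v-1}{2}=0$ is not what legitimizes Fubini -- it is what you need when you subsequently split the combined $t$-integral into the three individually log-divergent Binet-type pieces, so that step should be run with a cutoff $T\rightarrow\infty$; you clearly see this, but the two issues should not be conflated. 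Finally, your calculus part supplies exactly the details the paper waves off as ``a bit of elementary calculus'': the Taylor expansions at $0$ and $1$ give the stated values of $q$, $q'$, $q''$ and the $C^1$ matching, and your monotonicity argument -- reducing $q'(v)<0$ to $F(t)=1-\cos t+(\pi-t)\sin t-\pi t+\frac{t^2}{2}<0$ on $(0,2\pi)$ with $F'(t)=(\pi-t)(\cos t-1)$ -- is clean and complete; it can also be cross-checked against the paper's observed symmetry $q(v)+q(1-v)=\frac{1}{\pi}$, which your closed form reproduces.
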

\begin{proof}
Consider $W(z)$ on the disk $\{|z|\leqq R\}$. One has
\[
W(z) = \lim_{N\rightarrow \infty} \Big (
\frac{\sin \pi z}{\pi} \Big )^2 \Big \{ 
\sum^N_{m=1} \frac{1}{(z-m)^2} - \sum^N_{m=1}
 \frac{1}{(z+m)^2} + \frac{2}{z}\Big \}
\]
as a uniform limit. But,
\begin{align*}
K(p-m) \,& = \, \int^1_{-1} (1-|v|)
e^{2\pi imv} e^{-2\pi ipv}dv\\
pK(p) \,& = \, \int^1_{-1} \frac{i}{2\pi}
 sgn(v) e^{-2\pi ipv}dv
\end{align*}
via \S1.2. Accordingly (note the ``$-z$''):
\begin{align*}
W(z)\,&= \, \lim_{N\rightarrow \infty} \int^1_{-1
} \Big( \sum^N_{-N} (1-|v|)\sgn(m)e^{2\pi imv} 
+ \frac{i}{\pi} \sgn(v) \Big )e^{-2\pi izv}dv\\
\,& = \,\lim_{N\rightarrow \infty} i\int^1_{-1} 
\Big [\frac{2(1-|v|)}{2\sin (\pi v)} 
\Ree( e^{\pi iv} - e^{2\pi iv(N+\frac{1}{2})} ) 
 + \frac{1}{\pi} \sgn(v)\Big ] e^{-2\pi izv}dv
\\[.31ex]
\,& = \, \int^1_{-1} \Big[(1-|v|)\ctn (\pi v)
+\frac{1}{\pi}\sgn(v)\Big] \sin (2\pi zv)dv\\
& \quad -\lim_{N\rightarrow \infty} \int^1_{-1} 
\frac{1-|v|}{\sin(\pi v)} \cos 2\pi 
v(N+ \vsfrac{1}{2})\sin (2\pi zv)dv\,.
\end{align*}
Since $\frac{1-|v|}{\sin(\pi v)}\sin (2\pi zv)$
 is a continuous function of $(v,z)$, the final
$N$-limit is 0 (uniformly w.r.t. $\!\!z$) by the 
Riemann-Lebesgue lemma. Representation (2.15)
 follows at once.

To conclude the proof, one simply uses the relation 
$q=\frac{1}{2\pi}A$ and a bit of 
elementary calculus.  Observe that $q(v)+q(1-v)
= \frac{1}{\pi}$ holds on $[0,1]$. \;\bx
\end{proof}

\section[Esseen's Lemma over ${\mathbb R}$]
{Esseen's Lemma for Probability Distributions over ${\mathbb R}$}
\renewcommand{\theequation}{3.\arabic{equation}}
\setcounter{equation}{0}

\subsection{}
Our primary focus will now shift to probability 
distributions $F(x)$ and their characteristic functions 
\[
\varphi(\zeta) \equiv \int_{{\mathbb R}} e^{i\zeta x}dF(x)\MSB,
\]
as discussed, for instance, in [Fel2, chap.~15], [Z2, p.262], and
[Bil, \S26].$\MSA$\footnote{We make the tacit assumption 
that all probability distributions 
on ${\mathbb R}$ are taken to be \emph{right} continuous. 
\,Similarly for ${\mathbb R}^k$.}

\begin{lemma}
Let $F$ and $G$ be any two probability distributions 
on ${\mathbb R}$ having continuous densities $f(x)$ 
and $g(x)$, respectively. Assume that 
$0 \leqq g(x) \leqq m$ and that\vspace*{-0.31ex}
\[
\int_{{\mathbb R}} |x|^\alpha dF(x) < \infty, \quad 
\int_{{\mathbb R}} |x|^\alpha dG(x) < \infty \vspace*{.31ex}
\]
for some positive $\alpha\MSH$. $\MSB\MSH$Take   
$\Omega > 0$ and put
\[
\varphi (\zeta) = \int_{{\mathbb R}} e^{i\zeta x}dF(x), \quad
 \psi (\zeta) = \int_{{\mathbb R}} e^{i\zeta x} dG(x)\,.
\]
The a priori inequality
\begin{equation}\label{3.1}
|F(t)-G(t)| \,\leqq\,  c_1\! \int^\Omega_{-\Omega}
 \bigg | \frac{\varphi (\zeta) - \psi(\zeta)}{\zeta}\bigg | 
\,d\zeta \,+\, c_2 \mspace{1mu} \frac{m}{\Omega}
\end{equation}
will then hold on ${\mathbb R}$ for certain universal 
constants $c_1$ and $c_2$. (The $d\zeta$-integral is 
automatically convergent as an improper Riemann integral.)
\end{lemma}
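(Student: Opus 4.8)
The plan is to run the classical Esseen smoothing argument, with the Beurling--Selberg majorant $B$ and minorant $b$ of $\sgn$ (equivalently, of the half-line indicator $\chi_{(-\infty,t]}$) in place of the usual Fej\'er kernel, so that Theorems 2.2 and 2.4 do the bookkeeping. Write $\mu = F - G$; since $F, G$ have continuous densities, $\mu$ has density $\kappa = f - g \in L^1(\mathbb R)$, $\mu(\mathbb R) = 0$, $F$ is continuous, and (convention of \S1.2) $\widehat{\kappa}(p) = \varphi(-2\pi p) - \psi(-2\pi p)$. With $\beta = \min(\alpha,1)$, the moment hypotheses together with $|e^{i\zeta x} - 1| \leq 2|\zeta x|^\beta$ give $|\varphi(\zeta) - 1| + |\psi(\zeta) - 1| = O(|\zeta|^\beta)$ near $\zeta = 0$, so $\widehat{\kappa}(p)/p$ is locally integrable --- which already yields the parenthetical assertion about the $d\zeta$-integral. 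Put $\lambda = \Omega/(2\pi)$ and
\[
M_t(x) = \tfrac{1}{2}\bigl(1 + B(\lambda(t-x))\bigr), \qquad m_t(x) = \tfrac{1}{2}\bigl(1 + b(\lambda(t-x))\bigr),
\]
so that $m_t \leq \chi_{(-\infty,t]} \leq M_t$ pointwise by Theorem 2.2(c). From $F(t) \leq \int M_t\,dF$, $G(t) \geq \int m_t\,dG$ and $\mu(\mathbb R) = 0$ we obtain $F(t) - G(t) \leq \int_{\mathbb R} M_t\,d\mu + \int_{\mathbb R}(M_t - m_t)\,dG$, and from $F(t) \geq \int m_t\,dF$, $G(t) \leq \int M_t\,dG$ we obtain $F(t) - G(t) \geq \int_{\mathbb R} m_t\,d\mu - \int_{\mathbb R}(M_t - m_t)\,dG$. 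In both, $M_t - m_t = \tfrac{1}{2}(B-b)(\lambda(t-\cdot)) = K(\lambda(t-\cdot)) \geq 0$ by Theorem 2.2(a), so with $0 \leq g \leq m$ and $\int_{\mathbb R} K = 1$ (cf.\ (2.4)) one has $\int_{\mathbb R}(M_t - m_t)\,dG \leq m/\lambda = 2\pi m/\Omega$: this produces the $c_2\,m/\Omega$ term and is the only place where the bound $g \leq m$ is used.

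It remains to control $\int M_t\,d\mu = \tfrac{1}{2}(B_\lambda * \kappa)(t)$ and $\int m_t\,d\mu = \tfrac{1}{2}(b_\lambda * \kappa)(t)$, where $B_\lambda(y) := B(\lambda y)$ and the constant in $M_t$, $m_t$ drops since $\mu(\mathbb R) = 0$. Since $B$ is entire of exponential type $2\pi$ (Theorem 2.4) and bounded on $\mathbb R$, its distributional Fourier transform is supported in $[-1,1]$; and since $B - \sgn \in L^1$ --- indeed $|B - \sgn| \leq 2K$ with $\int_{\mathbb R} 2K = 2$ (Theorem 2.2(f)) --- we may write $\widehat{B}(v) = \tfrac{1}{i\pi v} + \widehat{h}(v)$ with $h := B - \sgn$, $\widehat{h}$ continuous, $\|\widehat{h}\|_\infty \leq \|h\|_1 \leq 2$, and (forced by the support statement) $\widehat{h}(v) = -\tfrac{1}{i\pi v}$ for $|v| > 1$. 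Dilating, $\widehat{B_\lambda}(p) = \tfrac{1}{i\pi p} + \tfrac{1}{\lambda}\widehat{h}(p/\lambda)$ is supported in $[-\lambda,\lambda]$, so $\widehat{B_\lambda * \kappa} = \widehat{B_\lambda}\widehat{\kappa}$ is the genuine $L^1$ function $p \mapsto (\tfrac{1}{i\pi p} + \tfrac{1}{\lambda}\widehat{h}(p/\lambda))\,\widehat{\kappa}(p)\,\mathbf{1}_{[-\lambda,\lambda]}(p)$ (genuine because $\widehat{\kappa}(0) = 0$ and $\widehat{\kappa}(p)/p$ is integrable near $0$), and Fourier inversion gives
\[
(B_\lambda * \kappa)(t) = \int_{-\lambda}^{\lambda}\Bigl(\tfrac{1}{i\pi p} + \tfrac{1}{\lambda}\widehat{h}(p/\lambda)\Bigr)\widehat{\kappa}(p)\,e^{2\pi ipt}\,dp.
\]
Taking absolute values, substituting $\zeta = -2\pi p$, and --- for the $\widehat{h}$-piece --- using $|\widehat{\kappa}(p)| \leq \Omega\,|\widehat{\kappa}(p)/(2\pi p)|$ on $[-\lambda,\lambda]$ to absorb the $1/\lambda$, both pieces become $\leq (\text{const})\int_{-\Omega}^{\Omega}|(\varphi(\zeta) - \psi(\zeta))/\zeta|\,d\zeta$, the dilation factor $\lambda$ cancelling; one gets $|\int M_t\,d\mu| \leq (1 + \tfrac{1}{2\pi})\int_{-\Omega}^{\Omega}|(\varphi - \psi)/\zeta|\,d\zeta$, and the same computation applies verbatim to $b$ (for which $b(z) = -B(-z)$, hence $\widehat{b}(v) = \tfrac{1}{i\pi v} + \widehat{(b - \sgn)}(v)$ with $\|b - \sgn\|_1 \leq 2$). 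Combining with the preceding paragraph, $|F(t) - G(t)| \leq c_1\int_{-\Omega}^{\Omega}|(\varphi - \psi)/\zeta|\,d\zeta + c_2\,m/\Omega$ with universal $c_1, c_2$, i.e.\ (3.1).

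The delicate point is the Fourier-inversion step just used: $B_\lambda \notin L^1$ and $\mu$ need not have integrable tails, so the identity $\widehat{B_\lambda * \kappa} = \widehat{B_\lambda}\widehat{\kappa}$ and the passage to the inversion integral want a careful distributional justification. The cleanest way to avoid this is to prove (3.1) first under the additional hypothesis that $f$ and $g$ are Schwartz functions --- obtained by convolving $F, G$ with a narrow Gaussian $\mathcal{N}_\varepsilon$, which keeps $0 \leq g * \mathcal{N}_\varepsilon \leq m$ and merely multiplies $\varphi, \psi$ by $e^{-\varepsilon\zeta^2/2}$, so that $\kappa_\varepsilon$ and $\widehat{\kappa_\varepsilon}$ lie in $\mathcal{S}$ and every integral above is absolutely convergent with classical inversion --- and then to let $\varepsilon \downarrow 0$, invoking the continuity of $F, G$ together with dominated convergence on $[-\Omega, \Omega]$, the dominating function being exactly the locally integrable $|(\varphi - \psi)/\zeta|$ furnished by the moment hypotheses. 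The rest --- the sandwich, the $K$-estimate, and the arithmetic of constants --- is routine.
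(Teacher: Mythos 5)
Your overall strategy is the one the paper itself uses: sandwich $\chi_{(-\infty,t]}$ between $\frac{1}{2}(1+b)$ and $\frac{1}{2}(1+B)$ via Theorem 2.2(c), charge the gap $K(\lambda(t-\cdot))$ to the bounded density $g$ (Theorem 2.2(a) together with $\int_{\mathbb R}K=1$) to produce the $m/\Omega$ term, and convert the main term into an integral of $|(\varphi-\psi)/\zeta|$ over $[-\Omega,\Omega]$ by exploiting the fact that $\widehat{B}$ is $\frac{1}{i\pi v}$ plus a bounded continuous function supported in $[-1,1]$. The constant-chasing, the H\"{o}lder bound $|\varphi(\zeta)-\psi(\zeta)|=O(|\zeta|^{\min\{\alpha,1\}})$ near $\zeta=0$ (which indeed settles the parenthetical convergence claim), and your soft identification of $\widehat{B}$ --- exponential type $2\pi$ plus boundedness forcing support in $[-1,1]$, and $B-\sgn\in L_1$ supplying the continuous remainder --- are all sound; the last point is a legitimate substitute for the paper's explicit $Q(v)$ of Theorem 2.11 (eq.\ (3.3)), at the cost only of the sharp value of $c_1$.

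The gap sits exactly in the step you flag as delicate. Convolving with a narrow Gaussian does \emph{not} place $\kappa_\varepsilon$ in $\mathcal S$: it makes $\kappa_\varepsilon$ smooth and makes $\widehat{\kappa_\varepsilon}$ decay like a Gaussian, but it leaves the tails of $f-g$ untouched, and under the hypotheses these possess only a finite $\alpha$-th moment (possibly with $\alpha<1$), so $\kappa_\varepsilon$ need not be rapidly decreasing. Consequently the mollified problem still presents the very difficulty the detour was meant to remove: justifying $\widehat{B_\lambda\ast\kappa_\varepsilon}=\widehat{B_\lambda}\,\widehat{\kappa_\varepsilon}$ and the passage to the inversion integral with $B_\lambda\notin L_1$ and $\kappa_\varepsilon$ merely in $L_1$. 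The defect is local and repairable --- either carry out the tempered-distribution convolution argument honestly, or first truncate the densities to compact support (after which Gaussian smoothing genuinely yields Schwartz functions) and remove the truncation by a further limit --- but as written the step fails. It is worth noting how the paper avoids the issue altogether: it never invokes the convolution theorem, but instead substitutes the pointwise improper-Riemann representation (3.3) of $B(\Omega x)$ and $b(\Omega x)$ into the $dx$-integral and interchanges the two integrations directly, using the uniform boundedness of the partial sine integrals $\int_{\varepsilon}^{\Omega}\sin(2\pi xw)\,w^{-1}dw$ together with dominated convergence and Fubini; this works at once for arbitrary continuous densities satisfying the stated moment conditions, and the Gaussian regularization enters only later (Theorem 3.3), to remove the density assumption on $F$.
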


\begin{proof} 
By considering $F(x_0+u)$ and $G(x_0+u)$, it suffices 
to treat the case $t=0$. To expedite matters, 
we refer to Theorem 2.11 and set
\begin{align*}
T(v) & = \left \{ \begin{array}{l}
0\,, \quad \quad v=0\\[.63ex]
\dfrac{Q(v)-Q(0)}{v}\;, \quad 0<|v| \leqq 1
\end{array} \right \}
\in C[-1, 1]\,;\\[1.26ex]
R_B(v) & = \sfrac{1}{i} T(v) + (1-|v|) \,\in\, C[-1, 1]\,;\\[1.26ex]
R_b (v) & = \sfrac{1}{i} T(v) - (1-|v|) \,\in\, C[-1, 1]\,;\\[1.26ex]
\lambda & = \sup_{0\leqq \xi\leqq 1} \left|-iQ(\xi) + \xi (1-\xi)\right|\,.
\end{align*}
For $A>0$ and $h \in C\{0<|v| \leqq A\}$, we also set
\\[.21ex]
\begin{equation}\label{3.2}
\oint^A_{-A} h(v) dv \,=\, \lim_{\varepsilon \rightarrow 0^+}\, 
 \int_{\varepsilon\leqq |v|\leqq A}\, h(v)dv
\end{equation}
\\[.21ex]
whenever the right-hand limit exists.

By combining Theorems 2.2 and 2.11, it is virtually self-evident that
\begin{align*}
\hspace{0em}B(x) & = \oint^1_{-1} \big [\sfrac{Q(v)}{iv} +
  (1-|v|)\big ]e^{2\pi ixv} dv\hspace{0em}\\
\hspace{0em}b(x) & = \oint^1_{-1} \big [\sfrac{Q(v)}{iv} -
  (1-|v|)\big ]e^{2\pi ixv} dv\hspace{0em}\\[.5ex]
\hspace{7.74em}B(x) & = \oint^1_{-1} \big [ \sfrac{1}{\pi iv} +
 R_B(v)\big ] e^{2\pi ixv}dv\hspace{7.74em}&\mbox{(3.3a)}\\
\hspace{7.74em}b(x) & =  \oint^1_{-1} \big [ \sfrac{1}{\pi iv} +
 R_b(v)\big ] e^{2\pi ixv}dv\, .\hspace{7.74em}&\mbox{(3.3b)}
\end{align*}
One knows that
\begin{align*}
B(x) & = \sgn(x) + \theta(x), \quad\;\, 0 \leqq 
\theta (x) \leqq 2K(x)\,;\\[.31ex]
b(x) & = \sgn(x) + \sigma(x), \quad -2K(x) \leqq \sigma (x) \leqq 0 \,.
\end{align*}

Observe now that:
\begin{align*}
2\big(F(0)-G(0)\big) & = 2\big(1 - G(0)\big) \MSA -\MSB 
2\big(1 - F(0)\big)\\[.46ex]
& = \int^\infty_{-\infty} \big(g(x)-f(x)\big) 
   \big[1+\sgn (\Omega x)\big]\,dx\\[.31ex]
& = \int^\infty_{-\infty} \big(g(x)-f(x)\big) 
     \sgn (\Omega x)\, dx\\[.31ex]
& = \int^\infty_{-\infty} \big(g(x)-f(x)\big)
    B(\Omega x)\,dx - \int^\infty_{-\infty}
    g(x)\theta(\Omega x)\,dx\\[.31ex]
& \quad + \int^\infty_{-\infty} f(x) 
    \theta(\Omega x)\,dx\\[.31ex]
& \geqq \int^\infty_{-\infty} \big(g(x)-f(x)\big)
    B(\Omega x)dx - m \int^\infty_{-\infty}
   \theta(\Omega x)dx\\[.31ex] 
& = \int^\infty_{-\infty} \big(g(x)-f(x)\big)
    B(\Omega x)dx - \frac{m}{\Omega}
\end{align*}\\ 
by Theorem 2.2$\MSB$(g). \,A trivial substitution for $B(\Omega x)$ 
gives 
\begin{align*}
\int^\infty_{-\infty}\big(g(x)-f(x)\big)B(\Omega x)dx
& = \int^\infty_{-\infty} \big[g(x)-f(x)\big]
  \Big ( \oint^\Omega_{-\Omega}
 \frac{e^{2\pi ixw}}{\pi iw}\, dw\Big )dx\\
  + \int^\infty_{-\infty} \big[ & g(x)-f(x)\big]
  \Big (\oint^\Omega_{-\Omega} \frac{1}{\Omega}
  R_B \big (\frac{w}{\Omega}\big )
  e^{2\pi ixw}dw \Big )dx\,.
\end{align*} 
Notice, however, that one has
\begin{align*}
\int_{\varepsilon \leqq |w|\leqq \Omega} 
  \msfrac{e^{2\pi ixw}}{\pi iw}\, dw
& = \int_{\varepsilon \leqq |w|\leqq \Omega}
  \msfrac{\sin (2\pi x w)}{\pi w}\, dw\\[.31ex]
& = \msfrac{2}{\pi} \int^\Omega_{\varepsilon}
  \msfrac{\sin (2\pi xw)}{w}\, dw\\[.48ex]
& = \msfrac{2}{\pi}\int^{2\pi x \Omega}_{2\pi x \varepsilon}
  \msfrac{\sin u}{u} \, du \, = \, O(1)
\end{align*}
with an implied constant that is absolute (since
$\int^\infty_{0} u^{-1}\sin(u)\, du$ converges to
$\pi /2$). By the Lebesgue dominated convergence 
theorem and Fubini, it follows that
\small
\begin{align*}
\int^\infty_{-\infty} \big(g(x)-f(x)\big)
 B(\Omega x)dx
& = \oint^\Omega_{-\Omega} 
   \frac{1}{\pi iw} \int^\infty_{-\infty} \big[g(x)-f(x)\big]
   e^{2\pi ixw} dx\, dw\\[.31ex]
& \quad + \oint^\Omega_{-\Omega} \frac{1}{\Omega}
  R_B \big (\frac{w}{\Omega}\big )
  \int^\infty_{-\infty} \big[g(x)-f(x)\big]
  e^{2\pi ixw} dx\, dw\\[.31ex]
& = \oint^\Omega_{-\Omega} r(w)\big[\psi (2\pi w)-
  \varphi (2\pi w)\big] dw
\end{align*} 
\normalsize
wherein
\begin{align*}
r(w) & \equiv \msfrac{1}{\pi iw} + \msfrac{1}{\Omega} R_B 
  \big (\msfrac{w}{\Omega}\big )\\[.63ex]
& = \msfrac{1}{\Omega} \big (\msfrac{1}{\pi iv}+R_B(v)\big )
   \quad \mbox{\{with $v=w/\Omega$\}}\\[.63ex]
& = \msfrac{1}{\Omega} \Big [\msfrac{Q(v)}{iv}+ (1-|v|)\Big ]
  = \msfrac{1}{\Omega}\Big [\msfrac{-iQ(v)+v(1-|v|)}{v}\Big ]\,.
\end{align*}
Since $Q$ is even, we clearly have
\[
|r(w)| \leqq \frac{\lambda}{|w|} 
  \quad \mbox{for\,\, $0<|w|\leqq \Omega$}\,.
\]
Putting everything together, we find that:
\begin{align*}
2\big(F(0)-G(0)\big) & \geqq \int^\infty_{-\infty} 
  \big(g(x)-f(x)\big) B(\Omega x)dx - \frac{m}{\Omega}\\[.31ex]
& = \oint^\Omega_{-\Omega} r(w) 
   \big [\psi (2\pi w)-\varphi (2\pi w)\big ]dw-\frac{m}{\Omega}\\[.31ex]
& \geqq -\oint^\Omega_{-\Omega} |r(w)| \big |\varphi (2\pi w)
  - \psi (2\pi w)\big |dw - \frac{m}{\Omega}\\[.31ex]
& \geqq -\lambda \oint^\Omega_{-\Omega}
  \frac{|\varphi (2\pi w)-\psi (2\pi w)|}
  {|w|}\, dw - \frac{m}{\Omega}\\[.31ex]
& \geqq - \lambda \oint^{2\pi \Omega}_{-2\pi \Omega}
  \frac{|\varphi (w)-\psi(w)|}{|w|}\, dw - \frac{m}{\Omega}\,\,,
\end{align*}
where, in the last three lines, a natural convention is made if
the $dw$-integral diverges.
A parallel manipulation with $\sgn (\Omega x)=
b(\Omega x)-\sigma (\Omega x)$ gives
\[
2\big(F(0)-G(0)\big) \,\leqq\, \lambda \oint^
{2\pi \Omega}_{-2\pi \Omega}
   \frac{|\varphi(w)-\psi(w)|}{|w|}\, dw  +  \frac{m}{\Omega}\,\,.
\]
Dividing by ${\TS 2}$ and 
replacing $\Omega$ by $\Omega/2\pi$ finally yields
\[
|F(0)-G(0)| \,\leqq\, \frac{\lambda}{2}
 \oint^\Omega_{-\Omega} \frac{|\varphi(w)-\psi(w)|}{|w|}\,
  dw + \frac{\pi m}{\Omega}\,\, .
\]

To finish up, we set   
$\MSA\widetilde{\alpha} = \min \{\alpha, 1\}\MSA$ and 
simply note that
\begin{align*}
\varphi (w)-1 & = \int_{{\mathbb R}} (e^{ixw}-1)\MSH dF(x)  
= O(1)|w|^{\widetilde{\alpha}} \int_{{\mathbb R}} 
|x|^{\widetilde{\alpha}} dF(x)\MSH,
\end{align*}
the implied constant being at most 2. 
\,Similarly for $\psi (w)-1$. \vspace{0.78ex} \;\bx  
\end{proof} 

Estimate (3.1) corresponds to [Ess, p.32] and 
[Fel2, p.538 (3.13)] and is often referred to 
(together with Theorem 3.3 below) as the 
\emph{Esseen smoothing lemma}. \linebreak  The 
traditional proof of (3.1) makes use of an auxiliary 
convolution. The approach adopted 
here, based on the Beurling 
function (i.e., eq. $\MSZ$(3.3)),
\pagebreak is somewhat more direct. $\MSH\MSD$Since 
\\[-.20ex]
\[{\TS{ 
\sqrt{\MSAH 0 + \frac{1}{\pi^2}} \leqq \lambda
 \leqq \sqrt{\frac{1}{16}+\frac{1}{\pi^2}}<\frac{1}{2}\,,}}
\] 
\\[-1.80ex]
taking $\MSA (c_1, c_2)= (\frac{1}{4}, \pi)\MSAH$ will 
certainly be admissible.\footnote{Readers having computer
experience will readily check that $\lambda = .3263598$ to
7 decimal places. Those familiar with [Ess, chap. II]
may also wish to note 
that $\lambda/2 \approx (1.025)/2\pi \MSH.$} 

\begin{theorem}
\,Inequality (3.1) remains true without the simplifying 
hypothe-{\linebreak}sis 
that $F$ have a continuous density function on ${\mathbb R}$.  It 
is also valid when 
the \linebreak initial $\MSD C^1$ function $\MSA G \MSA$ is 
only known to be real and to satisfy 
\\[-0.93ex]
\setcounter{equation}{3} 
\begin{equation}\label{3.4} 
G(-\infty)=0\,,\:\: G(\infty)=1\,,\:\:  
|G'(x)|\leqq m\,,\:\: 
\TS \int^\infty_{-\infty}\,|x|^\alpha \MSA |G'(x)|\MSA dx < \infty\,.
\\[.93ex]
\end{equation}
\end{theorem}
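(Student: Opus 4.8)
The plan is to revisit the proof of Lemma 3.2, whose Beurling-function argument is considerably more robust than its hypotheses suggest, so that both relaxations cost only cosmetic changes. First I would dispose of the relaxation on $G$. Inspecting the proof of Lemma 3.2, one sees that neither the monotonicity of $G$ nor the sign of its density $g$ is ever used: what is actually needed is (i) $\int_{\mathbb R} G'(x)\,dx = G(\infty)-G(-\infty)=1$ --- the first two clauses of \eqref{3.4}; (ii) the bound $|g|\leq m$, which still yields $\int g(x)\theta(\Omega x)\,dx \leq m\int\theta(\Omega x)\,dx$ (here $\theta=B-\sgn\geq 0$, so $g\leq|g|\leq m$ does the job) and the parallel estimate with $\sigma=b-\sgn\leq 0$ on the $b$-side; and (iii) $\int|x|^\alpha|G'(x)|\,dx<\infty$, which together with $|G'|\leq m$ forces $g\in L_1({\mathbb R})$ and $\psi(w)-1=\int(e^{iwx}-1)G'(x)\,dx=O(|w|^{\widetilde\alpha})\int|x|^{\widetilde\alpha}|G'|\,dx$ exactly as before. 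Everything else in the proof --- the representations (3.3a)--(3.3b), the uniform $O(1)$ bound on $\oint_{-\Omega}^{\Omega}(\pi i w)^{-1}e^{2\pi ixw}\,dw$, the Fubini and dominated-convergence interchanges, the estimate $|r(w)|\leq\lambda/|w|$, and the arithmetic $\lambda<\tfrac12$ --- goes through word for word. Hence Lemma 3.2 holds as stated with $G$ replaced by any real $C^1$ function obeying \eqref{3.4}, provided $F$ still has a continuous density.

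Next I would remove the continuous-density hypothesis on $F$. The point is that this hypothesis enters the proof of Lemma 3.2 at exactly one place: the identity $2\bigl(1-F(t)\bigr)=\int_{\mathbb R}\bigl[1+\sgn(\Omega(x-t))\bigr]\,dF(x)$, which (since $\sgn(0)=0$ and $F$ is right continuous) picks up a spurious additive term $P(X=t)$ if $F$ has an atom at $t$. At a continuity point $t_0$ of $F$ this term vanishes, so one may run the entire argument of Lemma 3.2 --- in its $G$-generalized form above --- with $\int(\cdot)\,f(x)\,dx$ replaced throughout by $\int(\cdot)\,dF(x)$; all the interchanges remain legitimate, since $\oint$ against the $O(1)$ kernel is controlled by the total mass $1$ and the relevant $\widetilde\alpha$-moment of $F$ is finite. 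This delivers \eqref{3.1} at $t=t_0$. To reach an arbitrary $t$ I would use the decisive fact that the right-hand side of \eqref{3.1} does not depend on $t$: translating $F$ and $G$ by $t$ multiplies $\varphi$ and $\psi$ by the same unimodular factor, which cancels in $|\varphi-\psi|$. Since the continuity points of $F$ are dense, $F$ is right continuous and $G$ is continuous, one picks continuity points $t_n\downarrow t$, applies the previous step at each $t_n$, and lets $n\to\infty$ so that $F(t_n)-G(t_n)\to F(t)-G(t)$ against the fixed right-hand side; the constants $(c_1,c_2)=(\tfrac14,\pi)$ are untouched. (Equivalently one could mollify: replace $F$ by $F*\rho_n$ with $\rho_n\in C_c^\infty$ supported in $[-1/n,0]$, which has a $C^\infty$ density, retains a finite $\alpha$-moment, satisfies $(F*\rho_n)(t)\to F(t)$ by right continuity, and has characteristic function $\varphi\,\widehat{\rho_n}$ with $|\widehat{\rho_n}(\zeta)-1|\leq|\zeta|/n$, so the $d\zeta$-integral converges back to its original value.)

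I expect the only genuinely non-cosmetic step to be this last one --- the bookkeeping around atoms of $F$ and the accompanying density/right-continuity limit; the passage to $dF$ and to $|G'|$ everywhere else is just a matter of rereading the proof of Lemma 3.2 with those substitutions in place, there being no new analytic input required.
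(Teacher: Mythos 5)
Your proposal is correct, and its second half follows a genuinely different route from the paper's. For the relaxation on $G$, you and the paper do the same thing: the paper simply declares the extension to non-monotonic $G$ ``self-evident upon reviewing the earlier manipulations,'' and your itemization of the three places where the hypotheses actually enter (total mass $\int g =1$, the bound $|g|\leqq m$ against $\theta \geqq 0$ and $\sigma \leqq 0$, and the $\widetilde{\alpha}$-moment for convergence of the $d\zeta$-integral near $0$) is exactly that review, written out. For the relaxation on $F$, however, the paper does \emph{not} rerun the argument on Riemann--Stieltjes integrals as you do; it keeps Lemma 3.2 as a black box and convolves $F$ with the Gaussian ${\mathcal N}_\varepsilon$, checks that ${\mathcal N}_\varepsilon \ast F$ has a continuous density, characteristic function $e^{-\varepsilon^2\zeta^2/2}\varphi(\zeta)$, and (via ${\mathbb E}(|Q+X|^\alpha) \leqq 2^\alpha {\mathbb E}(|Q|^\alpha)+2^\alpha{\mathbb E}(|X|^\alpha)$) a finite $\alpha$-moment, and then lets $\varepsilon \rightarrow 0^+$, using $({\mathcal N}_\varepsilon \ast F)(t) \rightarrow F(t)$ at continuity points and the density of such points --- so your parenthetical mollification alternative is essentially the paper's proof, with a one-sided mollifier in place of the Gaussian. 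Your primary route --- observing that the density of $F$ is used only to avoid the atom term $P(X=t)$ in the identity $2(1-F(t))=\int[1+\sgn(\Omega(x-t))]\,dF$, running the whole computation with $dF$ at continuity points (the Fubini/dominated-convergence steps needing only the finiteness of $dF$ and the uniform $O(1)$ bound on the truncated kernel), and then closing up via translation-invariance of the right-hand side together with right continuity of $F$ and continuity of $G$ --- is more direct: it avoids verifying that the smoothed distribution satisfies the hypotheses and keeps the constants untouched by inspection. What the paper's mollification buys in exchange is modularity (Lemma 3.2 is reused verbatim, with no need to re-examine its interior), a pattern it then repeats in the $k$-variable setting of \S 4. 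Both arguments end with the same dense-continuity-points limit, and I see no gap in yours.
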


\begin{proof}
The extension to a non-monotonic $G$ is self-evident upon
reviewing the earlier manipulations.  With that augmentation in 
hand, widening the class of admissible $F$ is most easily
achieved by way of approximation. To this end, \linebreak one 
forms the (Riemann-Stieltjes) convolution 
${\mathcal N}_\varepsilon \ast F$ with
\[
{\mathcal N}_\varepsilon (x) = 
\frac{1}{\varepsilon \sqrt{2\pi}}
 \int^x_{-\infty} e^{-u^2/2\varepsilon^2} du
\]
and then passes to the $\varepsilon =0$ limit in 
(3.1). Compare [Fel2, pp.507 (bottom), 146 (theorem 4)]. 
The pertinent density and characteristic functions are
\[
\frac{1}{\varepsilon \sqrt{2\pi}} \int^\infty_{-\infty}
 e^{-(x-u)^2/2\varepsilon^2} dF(u) \quad 
\mbox{and}\quad\ e^{-\varepsilon^2\zeta^2/2}\varphi (\zeta)\,,
\]
respectively. The $dF$-integral is manifestly 
continuous w.r.t. \!$x\MSB$; one also knows that\vspace*{1.0ex}
\[
{\mathbb E} (|Q+X|^\alpha) \leqq 2^\alpha 
{\mathbb E} (|Q|^\alpha)+2^\alpha {\mathbb E} 
(|X|^\alpha) \vspace*{1.0ex}
\]
for general random variables $Q$ and $X$. That being said, 
to finish the 
proof, one simply notes that
\[
\lim_{\varepsilon \rightarrow 0^+}
\big [({\mathcal N}_\varepsilon \ast F)(t)-F(t)\big ]=
  \lim_{\varepsilon\rightarrow 0^+}
\int_{{\mathbb R}} \big [F(t-v)-F(t)\big ] 
d{\mathcal N}_\varepsilon (v) = 0\vspace*{-0.3ex}
\] 
holds at every point of continuity of $F$. 
Since such $t$ are everywhere dense along 
${\mathbb R}$, \,inequality (3.1) follows with 
the same constants $c_j$ as before. \bx
\end{proof}

Esseen's lemma admits an important 
$\MSA$(and very well-known)$\MSA$   
corollary in connection with 
convergence of probability distributions.

\begin{corollary}
Given $\{G, g, \alpha, \psi\}$ as in Lemma 3.2. Let 
$\{F_n\}^\infty_{n=1}$ be any \linebreak sequence of probability 
distributions on ${\mathbb R}$ for which\\[.63ex] 
(i)\phantom{i}  
\quad $\int_{{\mathbb R}} |x|^\alpha dF_n(x) = O(1)$;\\[.63ex] 
(ii) \quad $ \int_{{\mathbb R}} e^{i\zeta x} dF_n(x) 
\rightarrow \psi(\zeta)$ pointwise on ${\mathbb R}$ as $n 
\rightarrow \infty$.\\[.63ex]
We then have\vspace*{0.63ex}
\[
F_n(x) \rightarrow G(x)\vspace*{0.9ex}
\]
uniformly on ${\mathbb R}$. An analogous result holds 
when $n$ is replaced by a continuous variable $\xi$.
\end{corollary}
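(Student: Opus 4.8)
The plan is to invoke the extended Esseen inequality (Theorem 3.3) with the given $G$ playing the role of the smooth comparison function and each $F_n$ playing the role of $F$. Hypothesis (i) guarantees that the finite-$|x|^\alpha$-moment requirement needed to apply Theorem 3.3 holds for every $n$ (and $G$ has continuous density bounded by $m$ and an $\alpha$-moment by the standing assumptions ``as in Lemma 3.2''), so for each $\Omega>0$ and each $n$,
\[
\sup_{t\in\mathbb{R}}\,|F_n(t)-G(t)|\;\leq\; c_1\!\int_{-\Omega}^{\Omega}\Big|\frac{\varphi_n(\zeta)-\psi(\zeta)}{\zeta}\Big|\,d\zeta\;+\;c_2\,\frac{m}{\Omega},
\]
where $\varphi_n$ is the characteristic function of $F_n$. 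The right-hand side is independent of $t$, so uniformity will come for free once the bound is driven to $0$; this I would do by the standard ``choose $\Omega$ first, then let $n\to\infty$'' device.

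First I would fix $\varepsilon>0$ and pick $\Omega$ so large that $c_2 m/\Omega<\varepsilon$. With $\Omega$ now frozen, it remains to show $\int_{-\Omega}^{\Omega}|\varphi_n(\zeta)-\psi(\zeta)|/|\zeta|\,d\zeta\to 0$ as $n\to\infty$. Hypothesis (ii) supplies the pointwise convergence $\varphi_n(\zeta)\to\psi(\zeta)$ on all of $\mathbb{R}$, so the integrand tends to $0$ pointwise for $\zeta\neq 0$; the crux is therefore to produce an $n$-independent integrable majorant on $[-\Omega,\Omega]$ so that the dominated convergence theorem applies. For this I would recycle the elementary estimate from the end of the proof of Lemma 3.2: with $\widetilde\alpha=\min\{\alpha,1\}$ one has $|\varphi_n(\zeta)-1|\leq 2|\zeta|^{\widetilde\alpha}\int_{\mathbb{R}}|x|^{\widetilde\alpha}\,dF_n(x)$ and likewise for $\psi$. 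Splitting the moment integral over $\{|x|\leq 1\}$ and $\{|x|>1\}$ gives $\int|x|^{\widetilde\alpha}\,dF_n\leq 1+\int|x|^\alpha\,dF_n=O(1)$ by (i), while $\int|x|^{\widetilde\alpha}\,dG<\infty$; hence $|\varphi_n(\zeta)-\psi(\zeta)|\leq C|\zeta|^{\widetilde\alpha}$ with $C$ independent of $n$. Combining with the trivial bound $|\varphi_n(\zeta)-\psi(\zeta)|\leq 2$ yields
\[
\frac{|\varphi_n(\zeta)-\psi(\zeta)|}{|\zeta|}\;\leq\;\min\!\Big\{C|\zeta|^{\widetilde\alpha-1},\,\frac{2}{|\zeta|}\Big\},
\]
which is integrable on $[-\Omega,\Omega]$ because $\widetilde\alpha-1>-1$.

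Dominated convergence then sends the $d\zeta$-integral to $0$, so $\limsup_{n\to\infty}\sup_{t\in\mathbb{R}}|F_n(t)-G(t)|\leq c_2 m/\Omega<\varepsilon$; since $\varepsilon$ was arbitrary, $\sup_t|F_n(t)-G(t)|\to 0$, which is the asserted uniform convergence. For the version with a continuous parameter $\xi$ (say $\xi\to\infty$), I would argue by contradiction through sequences: if uniform convergence failed there would be $\delta>0$ and $\xi_n\to\infty$ with $\sup_x|F_{\xi_n}(x)-G(x)|\geq\delta$, and $\{F_{\xi_n}\}$ still satisfies (i) and (ii), contradicting the sequential case just established. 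I expect the only genuinely delicate point to be the construction of the $n$-uniform majorant near $\zeta=0$ — everything else being a mechanical application of Theorem 3.3 and dominated convergence — and that delicacy is handled precisely by hypothesis (i) together with the $|\zeta|^{\widetilde\alpha}$ bound on $\varphi_n(\zeta)-1$.
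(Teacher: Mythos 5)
Your proposal is correct and follows essentially the same route as the paper: apply Theorem 3.3, use hypothesis (i) together with the $|\varphi_n(\zeta)-1|=O(|\zeta|^{\widetilde{\alpha}})$ estimate from the end of the proof of Lemma 3.2 to get an $n$-independent integrable majorant, invoke dominated convergence, and handle the continuous parameter $\xi$ by contradiction. Nothing essential is missing.
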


\begin{proof}
An immediate consequence of Theorem 3.3 and the 
Lebesgue dominated convergence theorem. Cf. (i) and 
the last three lines in the proof of Lemma 3.2. 
A trivial modification of this shows that the 
characteristic functions $\{\varphi_n(\zeta)\}^
\infty_{n=1}$ 
satisfy a \emph{uniform} H\"{o}lder 
$\widetilde{\alpha}$-condition on ${\mathbb R}$. The convergence 
hypothesized in (ii) will thus be uniform on every 
$\zeta$-interval $[-\Omega, \Omega]$\,. 

To treat $F_\xi$, one can either exploit the H\"{o}lder 
$\widetilde{\alpha}$-condition or simply 
reason by contradiction. \bx
\end{proof}

\subsection{}
In terms of applications, the single most common one for
Corollary 3.4 is undoubtedly its use in establishing  
the central limit theorem for sums of $N$ 
identically distributed, independent random variables
$X_j$.  See, for instance, [Fel2, p.515]. \,In Corollary 5.5 
below, we look at a somewhat more involved case. $\MSAH$(See
[Fel2, pp.542--543] for a prototypical \emph{effective} 
example.)

\subsection{}
Hypothesis (i) in Corollary 3.4 is of course very mild. 
By a change in method, it can simply be expunged. 

To appreciate this, it suffices to recall two standard 
facts from basic probability theory. $\MSA$First: 
that (ii) is tantamount to $F_n$ approaching $G$ weakly, 
i.e., in distribution. $\MSA$Second: that distributional 
convergence $F_n \rightarrow {\mathcal P}$ is automatically 
upgradable to uniform convergence on ${\mathbb R}$ anytime 
the limiting probability distribution 
${\mathcal P}$ is everywhere continuous. (Cf. 
[Fel2, p.285 (problem 5)] or [KeS, \S4.11] as regards
the latter assertion.) $\MSA$Insofar as the ``target'' 
distribution $G$ is continuous, hypothesis (ii) will 
thus furnish both a necessary and sufficient condition 
for the uniform $x$-limit articulated in 
Corollary 3.4.$\MSA$\footnote{Consideration 
of ${\mathcal N}_\varepsilon \ast G$ 
provides a good illustration of what can go 
wrong if $G$ has atoms.}

The downside to switching over to this much   
more rudimentary viewpoint [based ultimately 
on just the Helly selection principle and 
Fourier-Stieltjes \linebreak[2]inversion] is that  
the \emph{rate} of convergence will 
typically not be very transparent.\footnote{$\MSH$It
is worth noting that the technique used in 
[Fel2, pp.257--260$\MSA$(top)] runs into similar issues. 
The second line in (4.7) should be deleted there.}

\section[Esseen's Lemma over ${\mathbb R}^k$]
{The Esseen Smoothing Lemma in Several Variables}

\renewcommand{\theequation}{4.\arabic{equation}}
\setcounter{equation}{0}

\subsection{}
It is only natural to wonder if there exists a reasonably 
simple counterpart of (3.1) 
and Theorem 3.3 \emph{in higher dimensions}.  We examine 
this question\linebreak   
in the present section, placing a premium 
on retaining some measure of formal\linebreak  
similarity.   
Because of certain algebraic difficulties, the matter is not 
as straightforward as one might initially expect. 

To streamline our exposition, it is helpful to begin with several 
preliminaries and a bit of notation.

\subsection{}
When $k>1$, the one-variable inequalities 
$f_j(x_j) \leqq \chi_{{\mathscr A}_j}(x_j)$ do \emph{not} 
in general concatenate w.r.t. $j$ to produce the relation \vspace*{0.5ex}
\[
{\TS \prod^k_{j=1}}\MSC f_j(x_j) \,\leqq \, \chi_{{\mathscr A}}(x_1, \dots, x_k)
\vspace*{0.5ex}
\]
with ${\mathscr A}= {\mathscr A}_1 \times \dots \times {\mathscr A}_k$ 
over ${\mathbb R}^k$. Fortunately, an elementary ring-theoretic 
lemma [whose form I owe to a 1990 conversation with A. Selberg]
enables one to circumvent this difficulty $\MSZ$with $\MSA$minimal 
fuss$\MSA$ in a wide variety of technical 
settings.\footnote{that of 
\S 2 being fairly typical}

\begin{lemma}
Given any integer $k\geqq 2 \MSB$. Define symbolic functions
\[
f_j = \chi_j-\delta_j\MSB , \quad g_j = \chi_j + \varepsilon_j
\]
for $1\leqq j \leqq k$. We then have
\begin{align}\label{4.1}
(1-k)g_1 \cdots g_k \MSC\MSC+\MSC\MSC f_1&g_2 \cdots  g_k   
\MSC\MSC+ \MSC\MSC g_1 f_2 \cdots g_k \MSC\MSC+ \MSE\MSE .\,\,.\,\,. \MSE\MSE
+\MSC\MSC g_1 \cdots g_{k-1} f_k\\[0.5ex] 
& = \chi_1 \cdots \chi_k \MSAH-\MSAH S \MSC, \nonumber
\end{align}
wherein $S$ is a nonempty sum of monomials $\omega_1 \cdots \omega_k$ 
satisfying the conditions\\[0.50ex]
(i)\phantom{iiiii} $\omega_j \in \{\chi_j, \varepsilon_j, \delta_j\}$ 
for each index $j$\,;\\[0.56ex]
(ii)\phantom{iiii} $\omega_\tau \not= \chi_\tau$ for at 
least one $\tau$.\\[0.53ex]
When $\MSA k > 2$, certain of the products   
$\MSA\omega_1 \cdots \omega_k$ will appear in $S$ with a 
multiplicity larger than one$\MSH$; $\MSAH$e.g., $\varepsilon_1 \cdots 
\varepsilon_k \MSA$.

\end{lemma}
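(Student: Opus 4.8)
The plan is to collapse (4.1) into a purely formal identity among monomials in the $\chi_j$'s, $\varepsilon_j$'s, $\delta_j$'s and then read off the three claimed properties by inspection. First I would write each $f_j$ in terms of $g_j$: since $f_j=\chi_j-\delta_j=(\chi_j+\varepsilon_j)-(\delta_j+\varepsilon_j)$, we have $f_j=g_j-(\delta_j+\varepsilon_j)$, so the $i$-th term $g_1\cdots g_{i-1}f_i g_{i+1}\cdots g_k$ of the sum in (4.1) equals $g_1\cdots g_k-g_1\cdots g_{i-1}(\delta_i+\varepsilon_i)g_{i+1}\cdots g_k$. Summing over $i$ contributes $k\MSH g_1\cdots g_k$ from the first pieces, and adding the leading term $(1-k)g_1\cdots g_k$ makes the coefficient of $g_1\cdots g_k$ collapse to $1$. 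Thus the entire left side of (4.1) equals
\[
g_1\cdots g_k-\sum_{i=1}^{k} g_1\cdots g_{i-1}(\delta_i+\varepsilon_i)g_{i+1}\cdots g_k .
\]

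Next I would expand everything in the natural ``multiplicative basis.'' One has $g_1\cdots g_k=\sum_{T\subseteq\{1,\dots,k\}}\prod_{j\in T}\varepsilon_j\prod_{j\notin T}\chi_j$, whose $T=\emptyset$ term is exactly $\chi_1\cdots\chi_k$. Likewise $g_1\cdots g_{i-1}\varepsilon_i g_{i+1}\cdots g_k=\sum_{T\ni i}\prod_{j\in T}\varepsilon_j\prod_{j\notin T}\chi_j$, so the $\varepsilon_i$-parts sum over $i$ to $\sum_{\emptyset\ne T}|T|\prod_{j\in T}\varepsilon_j\prod_{j\notin T}\chi_j$; and $g_1\cdots g_{i-1}\delta_i g_{i+1}\cdots g_k=\sum_{T\subseteq\{1,\dots,k\}\setminus\{i\}}\delta_i\prod_{j\in T}\varepsilon_j\prod_{j\notin T\cup\{i\}}\chi_j$. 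Substituting, the $T=\emptyset$ term survives, the nonempty-$T$ terms of $g_1\cdots g_k$ combine with the $\varepsilon$-sum to leave coefficient $1-|T|$, and one is left with the left side of (4.1) written as $\chi_1\cdots\chi_k-S$ with
\[
S=\sum_{\emptyset\ne T\subseteq\{1,\dots,k\}}(|T|-1)\prod_{j\in T}\varepsilon_j\prod_{j\notin T}\chi_j\;+\;\sum_{i=1}^{k}\ \sum_{T\subseteq\{1,\dots,k\}\setminus\{i\}}\delta_i\prod_{j\in T}\varepsilon_j\prod_{j\notin T\cup\{i\}}\chi_j .
\]

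Finally I would read off the assertions from this closed form. Each monomial has the shape required by (i); and since every one of them carries some $\varepsilon_j$ or $\delta_j$ in slot $j$ (for $T\ne\emptyset$, or for the slot $i$ in the second sum), condition (ii) holds. Nonemptiness is visible from the second sum — for instance $\delta_1\chi_2\cdots\chi_k$ appears with coefficient $1$. For the multiplicity remark, taking $T=\{1,\dots,k\}$ in the first sum shows $\varepsilon_1\cdots\varepsilon_k$ occurs with coefficient $k-1$, which exceeds $1$ exactly when $k>2$. Beyond keeping the index bookkeeping straight, the one point that genuinely needs care — and the only place a ``sum of monomials'' statement could fail — is verifying that no cancellation produces a negative coefficient; this is guaranteed because $|T|-1\geq 0$ for every nonempty $T$, so the $\varepsilon$-only terms never go negative. (As a sanity check one can carry out the $k=2$ case by hand: the identity gives $S=\delta_1\chi_2+\chi_1\delta_2+\delta_1\varepsilon_2+\varepsilon_1\delta_2+\varepsilon_1\varepsilon_2$, all coefficients $1$, consistent with no repeated monomials when $k=2$.)
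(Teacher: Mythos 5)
Your proposal is correct and follows essentially the same route as the paper: both first rewrite the left side as $g_1\cdots g_k-\sum_i g_1\cdots(\delta_i+\varepsilon_i)\cdots g_k$ via $f_j=g_j-(\delta_j+\varepsilon_j)$ and then expand the $g$'s into $\chi/\varepsilon$ monomials (your subsets $T$ are exactly the paper's $G_j(m_j)$ bookkeeping over $\{0,1\}^k$). The only difference is that you push the expansion to an explicit closed form for $S$ with coefficients $|T|-1$ and the $\delta_i$-terms, which makes precise the paper's more qualitative ``subsumed algebraically'' step and its multiplicity remark about $\varepsilon_1\cdots\varepsilon_k$.
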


\begin{proof}
Put
\\[-1.0ex]
\[
G_j(m) = \left \{ \begin{array}{ll}
\chi_j, & m=0\\[1.24ex]
\varepsilon_j, & m=1
\end{array}\right \}
\]
\\
and then keep $m_j \in \{0, 1\}$. \,Since the LHS of (4.1) is just
\\
\[
g_1 \cdots g_k \,\, + \,\, (f_1 - g_1)g_2 \cdots g_k \,\, 
+ \MSE\MSE .\,\,.\,\,. \MSE\MSE
+ \,\, g_1 \cdots g_{k-1}(f_k - g_k)\,,
\]
\\ 
one is free to re-express things as
\begin{align*}
\sum_{\{1,..., k\}} G_1 (&m_1) \cdots G_k(m_k)\, - \, (\delta_1 + \varepsilon_1) 
\sum_{\{2,..., k\}} G_2 (m_2) \cdots G_k(m_k) -\, \cdots  \\[.31ex]
& - (\delta_k+\varepsilon_k) \sum_{\{1,..., k-1\}} G_1(m_1)
 \cdots G_{k-1}(m_{k-1})\,,
\end{align*}
$\{a,\dots, b\}$ serving here as an obvious shorthand for 
$(m_a,\dots, m_b)$. \vspace*{.31ex}\,But:
\begin{align}\label{4.2}
\sum_{\{1,..., k\}} G_1 (&m_1) \dots G_k (m_k)\\[-.21ex]
  & = \chi_1 \cdots \chi_k  + \sum_{m_1+\cdots+m_k>0} 
G_1(m_1) \cdots G_k (m_k)\,.\nonumber
\\[-2.00ex]\nonumber
\end{align}
The last sum is clearly ``subsumed'' algebraically by
\begin{align*}
& G_1(1) \sum_{\{2,..., k\}} G_2(m_2) \cdots G_k(m_k) + \,\cdots\\[.63ex]
& \phantom{mmi}+ G_k(1) \sum_{\{1,..., k-1\}} G_1(m_1) \cdots 
G_{k-1}(m_{k-1})\\[.63ex]
& \equiv \,\varepsilon_1 \sum_{\{2,..., k\}} G_2(m_2) \cdots 
G_k(m_k) + \,\cdots \\[.63ex]
& \phantom{mmi}+ \varepsilon_k \sum_{\{1,..., k-1\}} G_1(m_1) \cdots 
G_{k-1}(m_{k-1})\, .
\end{align*}
Inserting a ``redacted'' form of the above into (4.2) and 
then backtracking, it quickly becomes apparent that $S$ has 
the properties we've posited for it. $\blacksquare$

\end{proof}

For later use, we'll abbreviate equation (4.2) as
\[
\hspace{10.2em} g_1 \cdots g_k \MSB=\MSB \chi_1 \cdots 
\chi_k \MSB+\MSB \widetilde{S}\,, 
\hspace{9.5em}\mbox{(4.1$\MSB$bis)}
\]
wherein $\widetilde{S}$ is now viewed as an $\{\chi_j,
\varepsilon_j\}$ \!--\! counterpart of the expression $S$ that 
occurs in (4.1).

\subsection{}
Moving onward, suppose next that $n\geqq 1$ and that the 
variables $\{x, v \} \MSA\cup\MSA \{x_j, v_j\}^n_{j=1}$ are all real. 
Keep $\Delta$ and $\Delta_j$ positive, and write
\[
E=[-\Delta, \Delta]\:\: , \,\; E_j = [-\Delta_j, \Delta_j]\,.
\]
Also set
\[
{\mathscr E} = E_1 \times \cdots \times E_n
\]
and let $d\vec{v}$ be an abbreviation for the standard Euclidean 
volume element $dv_1 \cdots dv_n$.

For functions $f \in C(E)$, we introduce a \emph{difference} 
operator $D$ by writing
\begin{equation}\label{4.3}
(Df)(v) = \sfrac{1}{2} [f(v)-f(-v)]\,\,.
\end{equation}
For $G \in C({\mathscr E})$, we then let $D_jG$ denote the 
obvious partial difference. It is easily seen that the 
operators $D_j$ commute; moreover, $D^2_j = D_j$. The 
vector space $C({\mathscr E})$ therefore splits into a direct 
sum of $2^n$ simultaneous $D_j$ \!--\! eigenspaces $C({\mathscr E})_\sigma$,
wherein $\sigma$ corresponds to $\MSB\prod^n_{j=1} \{0, 1\}\MSB$ in 
an obvious fashion.

On a related note, recall that a function $h \in C(E)$ 
is said to be \emph{Hermitian} when $h(-x)= \overline{h(x)}\MSB$. 
Similarly for $H \in C({\mathscr E})$; here
\[
H(-x_1,\dots, -x_n) = \overline{H(x_1,\dots, x_n)}\,.
\]
By the symmetry properties of $d\vec{v}$, one immediately sees that
\[
\int_{{\mathscr E}} H(v_1, \dots, v_n) d\vec{v} \,\in\, {\mathbb R}\,.
\]
Similarly for $h$.

A moment's thought shows that functions in $C({\mathscr E})$ are 
Hermitian precisely when their $\sigma$ \!--\! components in 
$C({\mathscr E})$  
$(G_\sigma$, say) are respectively either real or purely 
imaginary depending on the parity of 
$\|\sigma\| \equiv \sigma_1 + \cdots + \sigma_n$. 

In line with this, it is also helpful to observe that:\\[.31ex]
(i) the characteristic function $\varphi(v_1,\dots, v_n)$ 
of any probability distribution $F$ (or totally finite 
signed measure $\eta$) on ${\mathbb R}^n$ is 
automatically Hermitian;\\[.45ex]
(ii) likewise for any ${\mathbb R}$-linear combination 
$(D_{\iota_1} ... D_{\iota_r})\varphi$ with $\iota_m \uparrow$ 
and $1\leqq r \leqq n$;\\[.45ex]
(iii) and -- yet again -- for the functions 
$\frac{1}{\Delta} R_B \big (\frac{w}{\Delta} \big )$ and 
$\frac{1}{\Delta} R_b \big (\frac{w}{\Delta}\big )$ in 
$C(E)$ that arose in the proof of (3.1).

\subsection{}
Our subsequent continuity considerations and crude estimates 
for functions of types (i) and (ii) will generally rest
either implicitly or explicitly on some combination of 
three basic inequalities; \,viz.,
\begin{align}\label{4.4}
|e^{2i(\tau+\phi)} &- e^{2i\phi}| \MSA=\MSA |e^{2i\tau}-1| \MSA=\MSA 
2|\sin \tau| \MSA \leqq \MSA
 2\min (1, |\tau|) \leqq 2\min (1, |\tau|^\omega)\MSB,\\[1.88ex]
& \MSE {\TS \prod^n_{j=1}} (|x_j| + |y_j|)^{\beta_j} \leqq (\|X\|_\infty
+ \|Y\|_\infty)^{\beta_1+ \cdots + \beta_n} \MSC, \\[1.88ex]
  |{\TS \prod^n_{j=1}}\MSA &w_j \MSC-\MSC {\TS \prod^n_{j=1}}\MSA \xi_j| 
\MSA\leqq\MSA {\TS \sum^n_{j=1}} |w_j-\xi_j| \quad \mbox{\,whenever\,} 
\quad |w_j| \MSC,\MSA |\xi_j| \leqq  1\MSD .
\end{align}
It is understood herein that $\phi$ and $\tau$ are real, $0 < \omega < 1$, 
and $\beta_j \geqq 0$.

\subsection{}
Finally, in connection with both ${\mathbb R}$ and 
${\mathbb R}^n$, it is convenient to have at one's disposal 
a symbolic Dirac delta function $\delta(\cdot)$ such that, 
for any $r \in [1, n]$ and $G \in C({\mathscr E})$,
\begin{align}\label{4.7}
\int_{{\mathscr E}} \delta(v_1&) \cdots \delta(v_r) 
G(v_1,\dots, v_n) \MSA d\vec{v}\\
& = \left \{ \begin{array}{ll}
G(0,\dots, 0) & \mbox{if $r=n$}\\[1.50ex]
\int_{E_{r+1} \times \cdots \times E_n} G(0,\dots, 0, v_{r+1},\dots, v_n) \MSA 
\TS \prod^n_{j=r+1} dv_j & \mbox{if $r<n$}
\end{array}\right \} . \nonumber
\end{align}
Similarly for $f \in C(E)$ and permuted sets of indices.  Also  
for functions $\widetilde{G}$ whose continuity is assumed 
only over, e.g.,
\[
\widetilde{{\mathscr E}} \equiv {{\mathscr E}} \cap \{v_{r+1} \cdots v_n \not= 0\},
\]
the behavior elsewhere being such that one at least has
\begin{equation}\label{4.8}
\sup_{E_1 \times \cdots \times E_r} |\widetilde{G} (\xi_1, \dots, 
\xi_r;\, v_{r+1},\dots, v_n)| \in L_1 (E_{r+1} 
\times \cdots \times E_n)\,,
\end{equation}
the latter condition serving to ensure that the 
$dv_{r+1} \cdots dv_n$ ``cross-sectional integral'' of 
$\widetilde{G}$ remains nicely continuous as the level 
$(\xi_1, \dots, \xi_r)$ varies.

\subsection{}
We are now able to state a prototypical \vspace*{-.18ex}
version of our $\MSH k \MSB$-$\MSAH$variable 
counterpart of $\MSA$(3.1)$\MSA$. 

Matters are facilitated in this by letting ${\mathscr P}$ 
denote a generic partition of $\{1,2,..., k\}$ into three 
disjoint \,(possibly empty!) subsets $\langle {\mathscr B}, 
{\mathscr C}, {\mathscr D}\rangle$ and then writing 
\\[-1.5ex]
\[
D_{{\mathscr C}} \MSA G \equiv \big ( 
\TS \prod_{j\in {\mathscr C}} D_j \big )\MSA G\,.
\] 
\\[-0.9ex]
(Here and below, we follow the standard convention that 
empty products are understood to be either 1 or the 
identity operator. When the context is clear, we also 
agree that `` \raisebox{-.56ex}{$\widehat{\phantom{n}}$} ''  
atop anything signifies expungement.)

\begin{theorem}
Given \,$k \geqq 1$ and any list of positive numbers  
$\{\Omega_1 ,\dots,\Omega_k \}$. \linebreak
Let $F$ be any probability distribution 
on ${\mathbb R}^k$ and $\MSB$$dG= g(x_1,\dots, x_k)\MSA d\vec{x}\MSB$
any totally finite \emph{signed} measure having
$g\in C({\mathbb R}^k)$. \,Suppose that
\begin{equation}\label{4.9}
\int_{{\mathbb R}^k} \big (\max_{1\leqq j\leqq k}
 |x_j| \big )^\alpha \MSA dF < \infty\, , \MSD\MSD 
\int_{{\mathbb R}^k} \big (\max_{1\leqq j\leqq k}
 |x_j| \big)^\alpha
\MSA |g(x_1,\dots, x_k)|\MSA d\vec{x} < \infty
\end{equation}
for some $\alpha > 0$ and that, in addition,
\begin{equation}\label{4.10}
\int_{{\mathbb R}^{k-1}} |g(x_1,\dots, x_k)|\MSA  dx_1 \cdots 
\widehat{dx_\ell} \cdots dx_k \leqq m_\ell
\end{equation}
for every $\ell \in [1,k]$ (the integral sign simply 
being absent if $k=1$). Let $\MSB\varphi (\vec{v})\MSB$ and 
$\MSB\psi(\vec{v})\MSB$ be the usual multivariate characteristic
 functions of $F$ and $dG\MSA$. (Cf. \S 3.1.)  With
\begin{equation}\label{4.11}
G(y_1,\dots, y_k) \equiv \int^{y_1}_{-\infty} \cdots 
\int^{y_k}_{-\infty} g(\xi_1,\dots, \xi_k) \MSA d\vec{\xi}\,,
\end{equation}
there then exist$\MSD$\footnote{$\MSAH$explicitly computable} positive 
constants $c_1$ and $c_2$ depending 
solely on $k$ such that one has the a priori bound
\begin{align}\label{4.12}
&|F(t_1,\dots, t_k) - G(t_1,\dots, t_k)|\\[2.2ex]
&\,\leqq\,c_1 \sum_{{\mathscr P}} \int^{\Omega_1}_{-\Omega_1} \cdots 
\int^{\Omega_k}_{-\Omega_k} |D_{{\mathscr C}} (\varphi-\psi)|
\prod_{j\in{\mathscr C}} \sfrac{1}{|v_j|}
\prod_{j\in {\mathscr D}} 
\Big ( \sfrac{1}{\Omega_j} + 
\sfrac{|\sin (t_jv_j)|}{|v_j|}\Big ) 
\prod_{j\in {\mathscr B}} \delta(v_j)d\vec{v}\nonumber\\[.31ex]
& \quad\MSE +\MSA c_2 
\sum^k_{\ell=1} \msfrac{m_\ell}{\Omega_\ell}\vspace*{-.46ex}\nonumber
\end{align}
at every point $(t_1,\dots, t_k)\in {\mathbb R}^k$. (Though (4.12)
 does \emph{not} require $\psi (\vec{0}) = 1$, cases having 
$\psi (\vec{0})\not= 1$ will generally be of at most 
subsidiary interest from a probabilistic standpoint; 
cf. the term with ${\mathscr C} = {\mathscr D} = \phi$.)
\end{theorem}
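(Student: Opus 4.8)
I plan to transfer everything to the finite signed measure $\mu := F - dG$ on ${\mathbb R}^k$ (so that $\widehat{\mu} = \varphi-\psi$) and to start from
\[
F(t_1,\dots,t_k)-G(t_1,\dots,t_k)\;=\;\int_{{\mathbb R}^k}\prod_{j=1}^{k}\chi_{(-\infty,t_j]}(x_j)\,d\mu .
\]
Writing $\chi_j$ for $\chi_{(-\infty,t_j]}(x_j)$, introduce in each coordinate the Beurling majorant and minorant $g_j(x_j)=\tfrac12\bigl[1+B\bigl(\Omega_j(t_j-x_j)\bigr)\bigr]$ and $f_j(x_j)=\tfrac12\bigl[1+b\bigl(\Omega_j(t_j-x_j)\bigr)\bigr]$. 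By Theorem 2.2(c) one has $f_j\leqq\chi_j\leqq g_j$ on ${\mathbb R}$, both $g_j,f_j$ are $O(1)$, and, setting $\delta_j=\chi_j-f_j\geqq 0$, $\varepsilon_j=g_j-\chi_j\geqq 0$, Theorem 2.2(f) gives $0\leqq\delta_j,\varepsilon_j\leqq K\bigl(\Omega_j(t_j-x_j)\bigr)$. Lemma 4.2 then furnishes the pointwise identity $\prod_j\chi_j=(1-k)\prod_j g_j+\sum_{i}g_1\cdots f_i\cdots g_k+S$ with $S\geqq 0$, where $S$ is a sum, with $k$-bounded multiplicities, of monomials each of which has a factor $\leqq K\bigl(\Omega_\tau(t_\tau-x_\tau)\bigr)$ for some $\tau$ and all remaining factors in $[0,1]$.

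Integrating against $d\mu=dF-g\,d\vec x$ and discarding $\int S\,dF\geqq 0$ yields
\[
F(\vec t)-G(\vec t)\;\geqq\;(1-k)\!\int\!\prod_j g_j\,d\mu\;+\;\sum_i\int g_1\cdots f_i\cdots g_k\,d\mu\;-\;\int S\,g\,d\vec x .
\]
The last term is $O_k(1)\sum_{\ell} m_\ell/\Omega_\ell$ by hypothesis (4.10) together with $\int_{\mathbb R}K\bigl(\Omega_\ell(t_\ell-x_\ell)\bigr)dx_\ell=1/\Omega_\ell$ (Theorem 2.2(g)). For each remaining integral one expands $B$ and $b$ exactly as in the proof of Lemma 3.2, writing $B\bigl(\Omega_j(t_j-x_j)\bigr)$ as a principal-value integral $\oint_{-\Omega_j}^{\Omega_j}r_j(w_j)\,e^{2\pi i(t_j-x_j)w_j}\,dw_j$ with $|r_j(w)|\leqq\lambda/|w|$, multiplying out the product, and interchanging with $d\mu$ termwise — legitimate because the truncated kernels $\int_{\varepsilon\leqq|w_j|\leqq\Omega_j}r_j(w_j)e^{-2\pi i x_j w_j}dw_j$ are $O(1)$ uniformly (convergence of $\int_0^\infty u^{-1}\sin u\,du$) and $\varphi-\psi$ is H\"older-$\widetilde{\alpha}$ by (4.9). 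Splitting $r_j$ into its odd part (still $\sim 1/|w|$) and its even part ($\leqq 1/\Omega_j$, supported in $[-\Omega_j,\Omega_j]$) and using oddness to convert the pertinent factors of $\varphi-\psi$ into difference quotients $D_j(\varphi-\psi)$ — the translation $t_j$ contributing each time an extra factor $|\sin(t_j w_j)|/|w_j|$ through the elementary inequalities (4.4)--(4.6) — one checks that every term so produced is dominated, up to the harmless rescaling $\Omega\mapsto\Omega/2\pi$ familiar from Lemma 3.2 and constants depending only on $k$, by one of the partition summands on the right of (4.12); the Dirac factors $\delta(v_j)$ for $j\in{\mathscr B}$ simply record the coordinates in which the constant part $\tfrac12$ of $g_j$ was taken (the $\mathscr B=\{1,\dots,k\}$ summand being $|(\varphi-\psi)(\vec 0)|$). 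This gives the lower bound $F(\vec t)-G(\vec t)\geqq -c_1\sum_{{\mathscr P}}(\cdots)-c_2\sum_\ell m_\ell/\Omega_\ell$.

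For the reverse inequality I argue by induction on $k$, the case $k=1$ being a (weakened) consequence of Theorem 3.3 modified in the obvious way to allow $\int dG\neq 1$, which is what produces the summand $|(\varphi-\psi)(\vec 0)|$. In the inductive step,
\[
F(\vec t)-G(\vec t)\;=\;\int\Bigl(\prod_{j=2}^{k}\chi_j\Bigr)d\mu\;-\;\int(1-\chi_1)\Bigl(\prod_{j=2}^{k}\chi_j\Bigr)d\mu .
\]
Applying Lemma 4.2 to the product $(1-\chi_1)\prod_{j=2}^{k}\chi_j$ — coordinate $1$ now carrying the complementary indicator, whose Beurling majorant and minorant, $1-f_1$ and $1-g_1$, are again of the form $\tfrac12[1\pm\text{Beurling}]$ — and running verbatim the estimate of the previous paragraph shows that the second integral is $\geqq -c_1\sum_{{\mathscr P}}(\cdots)-c_2\sum_\ell m_\ell/\Omega_\ell$. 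The first integral equals $F'(t_2,\dots,t_k)-G'(t_2,\dots,t_k)$ for the $(k-1)$-dimensional marginals $F',G'$ of $F,G$ over coordinates $2,\dots,k$ (whose density $\int_{\mathbb R}g\,dx_1$ inherits hypotheses (4.9)--(4.10)); by the inductive hypothesis its absolute value is $\leqq c_1\sum_{{\mathscr P}'}(\cdots)+c_2\sum_{\ell\geqq 2}m_\ell/\Omega_\ell$, and each such summand is itself a summand in (4.12) for ${\mathbb R}^k$ (a partition of $\{2,\dots,k\}$ being one of $\{1,\dots,k\}$ with $1\in{\mathscr B}$). Combining the two displays gives the matching upper bound, with new constants still depending only on $k$; unwinding (4.11) shows these are explicitly computable.

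The only genuinely non-routine point is the asymmetry of Lemma 4.2: its combination is rigged so that the error $S$ is a sum of \emph{nonnegative} monomials, which is exactly what lets one discard $\int S\,dF$ and obtain a one-sided estimate — but only the lower one. Forcing through the opposite inequality is what compels the passage to a complementary indicator in one coordinate and the induction on dimension, and checking that the partition bookkeeping $\langle{\mathscr B},{\mathscr C},{\mathscr D}\rangle$ closes up under this maneuver — together with the fact that marginal densities retain the required properties — is the delicate part. A secondary technical matter is the clean extraction of the difference operators $D_{{\mathscr C}}$ and of the ${\mathscr D}$-kernels $\tfrac1{\Omega_j}+\tfrac{|\sin(t_jv_j)|}{|v_j|}$ out of $\prod_j g_j$: this rests on the odd/even decomposition of the kernels $r_j$, on handling the translations $t_j$ via (4.4)--(4.6), and on the uniform $O(1)$ bound for the truncated singular integrals, which is also what legitimizes both the Fubini interchange and the convergence of the improper integrals that appear on the right of (4.12).
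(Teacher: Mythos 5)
Your lower-bound half is, in substance, the paper's own argument: the combination furnished by the ring-theoretic lemma (Lemma 4.3 in the paper's numbering, your ``Lemma 4.2''), discarding $\int S\,dF\geqq 0$, absorbing $\int S\,|g|\,d\vec{x}$ into $c\sum_\ell m_\ell/\Omega_\ell$ via (4.10) and $\int_{\mathbb R}K(\Omega u)\,du=1/\Omega$, and then the three-chunk Fourier expansion of each Beurling factor, with the odd/even splitting, the flip of cosines into unimodular phases, and the resulting $D_{\mathscr C}$-, $\delta(v_j)$- and $\bigl(1/\Omega_j+|\sin(t_jv_j)|/|v_j|\bigr)$-kernels. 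Building the translation into $B\bigl(\Omega_j(t_j-x_j)\bigr)$ instead of translating the densities, and working directly with $d\mu=dF-g\,d\vec{x}$ rather than first assuming a continuous density and convolving with a Gaussian at the end, are only cosmetic deviations.

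The genuine problem is your upper bound. Your premise that the lemma delivers ``only the lower one'' is mistaken: majorants concatenate trivially, since $0\leqq\chi_j\leqq g_j$ gives $\chi_1\cdots\chi_k\leqq g_1\cdots g_k$, i.e.\ relation (4.1\,bis), $g_1\cdots g_k=\chi_1\cdots\chi_k+\widetilde{S}$ with $\widetilde{S}\geqq 0$ a sum of monomials in $\{\chi_j,\varepsilon_j\}$ containing at least one $\varepsilon_\tau\leqq K\bigl(\Omega_\tau(t_\tau-x_\tau)\bigr)$. The paper's (4.17b) then runs the majorant track through exactly the same Fourier bookkeeping, with $\int|g|\,\widetilde{S}\,d\vec{x}$ again absorbed into $c\sum_\ell m_\ell/\Omega_\ell$; no induction, complementary indicator, or marginal ever enters. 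Your substitute induction, as written, does not close: the inductive hypothesis is the theorem itself, whose hypotheses include continuity of the density, but the marginal $g'(x_2,\dots,x_k)=\int_{\mathbb R}g\,dx_1$ need not be continuous under (4.9)--(4.10). For instance, with $k=2$ take $g(x_1,x_2)=\sum_{n}h(x_1-n)\,\eta_n(x_2)$, where $h$ is a fixed bump of unit mass and the $\eta_n$ are disjointly supported bumps of height $1$ and width $4^{-n}$ centered at $1/n$: this $g$ is continuous and satisfies (4.9)--(4.10), yet $g'$ equals $1$ at the points $1/n$ and $0$ at the origin. In addition, your base case silently requires a version of Theorem 3.3 valid when $\int dG\neq\int dF$, which you assert but do not derive. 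Both defects are repairable, but the repair is unnecessary once the majorant identity above is noticed.
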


\begin{proof}
Apart from some additional bookkeeping, the basic procedure 
is largely a mimic of the approach that was used for Lemma 3.2 
and Theorem 3.3. 

We start by taking $\alpha < 1$ w.l.o.g and writing 
$\beta=\alpha/k$. 
By (4.4) and a trivial manipulation, \pagebreak[4]
\begin{align}\label{4.13}
& |\sin u| \leqq \min \{1, |u|\} \leqq \min 
\{1, |u|^\beta\}\MSA;\nonumber\\[1.50ex]
D_{{\mathscr C}} \varphi &= \int_{{\mathbb R}^k} {\TS \prod_{j\in {\mathscr C}}}
 \MSA \big (i \sin (v_jx_j) \big )\MSA {\TS \prod_{j\in {\mathscr B}\cup {\mathscr D}}}
\MSB e^{i v_jx_j} \MSB dF(\vec{x})\,.
\end{align}
(Similarly for $D_{{\mathscr C}}\psi$.) Relation (4.8) is thus fulfilled 
by a large margin for every ${\mathscr P}$ \!--\! summand in (4.12)$\MSA$; cf. 
(4.9) and \S 4.5. Notice {\it too} that there is no loss
 of generality if a large positive constant ${\mathfrak N}$ is 
added to each ``max'' in (4.9).

It will be convenient to first prove (4.12) in the case 
where $(t_j) = (0)$ and $F$ has a continuous density 
function $f(x_1,\dots, x_k)$.
To this end, we write
\[
\chi(x) = \chi_{(-\infty, 0]}(x)
\]
and then use Theorem 2.2 to get
\[
\sfrac{1}{2} [1-B (\Omega x)] \leqq \chi(x) \leqq 
\sfrac{1}{2} [1-b(\Omega x)]
\]
for each $\Omega > 0$. This quickly leads to
\begin{equation}\label{4.14}
\widetilde{f}_\Omega (x) \leqq \chi (x) \leqq \widetilde{g}_\Omega (x)
\end{equation}
with
\begin{align*}
\widetilde{f}_\Omega(x) & \equiv - \msfrac{1}{2}
 \oint^\Omega_{-\Omega} \msfrac{D_w(e^{2\pi ixw})}{\pi i w}\MSA dw +
\msfrac{1}{2} \int^\Omega_{-\Omega}
e^{2\pi ixw} \big [\delta (w) - \frac{1}{\Omega}
R_B  \big (\msfrac{w}{\Omega}\big )\big ]\MSA dw\\[1.5ex]
\widetilde{g}_\Omega(x) & \equiv -\msfrac{1}{2}
\oint^\Omega_{-\Omega} \, \msfrac{D_w(e^{2\pi ixw})}{\pi iw} \MSA dw
+ \msfrac{1}{2} \int^\Omega_{-\Omega} 
e^{2\pi ixw} \big [ \delta (w) - \msfrac{1}{\Omega} R_b 
\big ( \frac{w}{\Omega}\big ) \big ] \MSA dw
\end{align*}
thanks to equation (3.3).  
$\MSB$By Theorem 2.2(a), one knows that
\[
\widetilde{g}_\Omega (x) - \widetilde{f}_\Omega (x) = K(\Omega x)\,.
\]
The situation of Lemma 4.3 is thus applicable with
\begin{equation}\label{4.15}
\left \{\begin{array}{l}
\chi_j = \chi (x_j), \MSE f_j = \widetilde{f}_{\Omega_j}(x_j),\MSE 
g_j = \widetilde{g}_{\Omega_j}(x_j)\\[1.55ex]
0 \leqq \delta_j \leqq K(\Omega_j x_j), \MSE 0 \leqq 
\varepsilon_j \leqq K(\Omega_j x_j)
\end{array}\right \} .
\end{equation}

To continue, we let
\begin{align*}
T^- ( x_1,\dots, x_k) & = \mbox{the LHS of (4.1)}\\[.46ex]
T^+ (x_1,\dots,x_k) & = g_1 \cdots g_k
\end{align*}
and then exploit the fact that\\
\begin{equation}\label{4.16}
T^{-} + S \,=\, \chi_1 \cdots \chi_k \,=\, T^{+} - \widetilde{S}
\end{equation} \\
in accordance with (4.1) and (4.1$\MSB$bis). Since
\[
F(\vec{0}) - G(\vec{0}) = \int_{{\mathbb R}^k} (f-g) \MSB\chi_1 \cdots
 \chi_k \MSB d\vec{x}
\]
and $f \geqq 0$, one immediately obtains
\begin{align*}
\hspace{5.08em}F(\vec{0}) - G(\vec{0}) 
& \geqq \MSB\int_{{\mathbb R}^k} (f-g)
T^- \MSA d\vec{x} \MSB-\MSB \int_{{\mathbb R}^k} 
|g| S\MSA d\vec{x}\hspace{5.08em} 
& \mbox{(4.17a)}\\[.93ex]
F(\vec{0})-G(\vec{0}) 
& \leqq \MSB\int_{{\mathbb R}^k} (f-g)
T^+ \MSA d\vec{x} \MSB+\MSB \int_{{\mathbb R}^k} 
|g| \widetilde{S} \MSA d\vec{x}\MSB. 
& \mbox{(4.17b)}
\end{align*}
Each $g_j$ or $f_j$ appearing in $T^\pm$ can and \emph{will} 
be viewed as the sum of three obvious chunks; cf. the formulae
following (4.14).

Prior to pushing onward with this, we note that, by (4.10), 
(4.15), and \linebreak the algebraic format of $S$ and 
$\widetilde{S}$, the $\MSA$final$\MSA$ terms in (4.17a) and (4.17b) 
are \linebreak manifestly dominated by
\setcounter{equation}{17}
\begin{equation}\label{4.18}
\sum^k_{\ell=1} c_3 \int^\infty_{-\infty} 
\MSA K(\Omega_\ell x_\ell)\MSA m_\ell \MSA dx_\ell 
\MSE=\MSE  c_3 \sum^k_{\ell=1} \, \msfrac{m_\ell}{\Omega_\ell}
\end{equation}
for some choice of $c_3$ ($= c_3(k)$).

The central issue now becomes one of substitution and 
straightforward justification of the applicability of 
Fubini's theorem. For the latter, it suffices to 
combine (4.9), (4.5), and our earlier observations 
about $|\sin u|$ and ${\mathfrak N}$.  $\MSD$Cf. also 
here the discussion following (3.3), especially 
as regards the uniform boundedness of the integral
\\[-1.0ex]
\[
\int^{\gamma_2}_{\gamma_1} \,\, 
\msfrac{\sin (2\pi xw)}{\pi w} \MSA dw
\]
\\[-1.0ex]
and related use of the Lebesgue dominated convergence theorem.

On the basis of these remarks, it quickly becomes apparent 
that all relevant manipulations 
for ``$\MSB$case $\vec{0}\MSC$'' are 
easily carried out --- in fact, separately 
so for both $\varphi$ and $\psi$, and with 
\emph{good majorants} throughout. One clearly obtains 
(4.12) with $\varphi(2\pi \vec{w}),\, \psi (2\pi \vec{w})$ 
in place of $\varphi(\vec{v}), \, \psi (\vec{v})$. To finish 
up format-wise, one simply replaces each $\Omega_j$ by 
$\Omega_j/2\pi$ and writes $v_j = 2\pi w_j$.

Keeping $dF=fd\vec{x}\MSA$, we turn next to the case of an 
$\MSA$arbitrary$\MSA$ point $(t_j) \in {\mathbb R}^k$. One 
freezes $(t_j)$ and introduces the new densities
\[
f^\ast (x_1,\ldots, x_k) = f(t_1+x_1,\ldots, t_k+x_k), \MSE 
g^\ast (x_1,\ldots, x_k)= g(t_1+x_1,\ldots, t_k+x_k).
\]
Conditions (4.9) and (4.10) are still satisfied. Working 
separately with the analogs of (4.17a) and (4.17b) in 
accordance with the first two sentences of the 
preceeding paragraph [cf. also here (4.18)], it is 
immediately evident that, up to trivial 
${\mathbb Q}$-coefficients, one obtains a 
specific sum of $d\vec{w}$ \!--\! integrals over
\[
{\mathscr E} = [-\Omega_1, \Omega_1] 
\times \cdots \times [-\Omega_k,\Omega_k]
\]
whose \emph{integrands} have the form
\begin{align*}
& \prod_{j\in {\mathscr C}} \msfrac{1}{\pi w_j}
 \prod_{j \in {\mathscr D}}
\msfrac{1}{\Omega_j} R_\ast \Big (\msfrac{w_j}{\Omega_j}\Big )
\prod_{j\in {\mathscr B}} \delta (w_j)\\[.62ex]
& \quad \cdot \int_{{\mathbb R}^k} 
\prod_{j\in {\mathscr C}} \sin (2\pi w_j x_j)\MSA 
\prod_{j\notin {\mathscr C}} e^{2\pi i w_j x_j} \MSB
(f^\ast-g^\ast)\MSA d\vec{x}\,.
\end{align*}
(Cf. (4.14).) Writing $x_j= y_j-t_j$, each such integrand 
then becomes, again up to trivial 
${\mathbb Q}$-coefficients, a \emph{specific} sum of \vspace*{.75ex} 
expressions like
\begin{align}\label{4.19}
& \prod_{j\in {\mathscr D}} \msfrac{1}{\Omega_j} R_\ast
\Big (\msfrac{w_j}{\Omega_j}\Big )
\prod_{j\in {\mathscr B}} \delta (w_j)
\prod_{j\notin {\mathscr C}} e^{-2\pi i w_j t_j}\\[.62ex]
& \quad \cdot \int_{{\mathbb R}^k} \prod_{j\in {\mathscr C}_1}
\Big ( \msfrac{\sin (2\pi w_j y_j)}{\pi w_j}\Big )
\cos (2\pi w_jt_j) \prod_{j\in {\mathscr C}_2}
\Big ( \msfrac{\sin (2\pi w_j t_j)}{\pi w_j} \Big )
\cos (2\pi w_j y_j)\nonumber\\[.62ex]
& \quad\quad\quad \cdot \prod_{j\notin {\mathscr C}}
e^{2\pi i w_j y_j} \MSB (f-g) \MSA d\vec{y}\,,
\vspace*{-.46ex}\nonumber
\end{align}
wherein $\langle {\mathscr C}_1, {\mathscr C}_2\rangle$ 
is a partition of ${\mathscr C}$. Since 
$\max\{|y_j|, |t_j|\}\leqq |y_j| + |t_j|$ 
for $j \in {\mathscr C}$ and
\\
\[
\int_{{\mathbb R}^k} \big ( 1+ \|\vec{y}\MSA\|_\infty +
\|\vec{t}\MSB\|_\infty \big )^\alpha (f+|g|)\MSA d\vec{y} < +\, \infty\,,
\]
\\
each of these new integrands clearly admits a good 
majorant over the set ${\mathscr E}$.
Observe now that one can successively ``flip'' each 
$\cos (2\pi w_j y_j)$ (with $j \in {\mathscr C}_2$) 
into $\exp(2\pi i w_j y_j)$ without changing the 
\emph{numerical} value of the corresponding iterated 
$d\vec{w}$ \!--\! integral over ${\mathscr E}$. Similarly for 
$\cos (2\pi w_j t_j)$ with $j \in {\mathscr C}_1$.

The upshot of course is that each expression in 
(4.19) can thus be replaced by
\begin{align*}
& \prod_{j\in {\mathscr D}} \msfrac{1}{\Omega_j}
R_\ast \Big ( \msfrac{w_j}{\Omega_j}\Big )
\prod_{j\in {\mathscr B}} \delta (w_j)
\prod_{j\notin {\mathscr C}} 
e^{-2\pi i w_j t_j}\\[.62ex]
& \quad \cdot \int_{{\mathbb R} ^k} \prod_{j\in {\mathscr C}_1}
\Big (\msfrac{\sin (2\pi w_j y_j)}{\pi w_j} \Big )
e^{-2\pi i w_j t_j} \prod_{j\in {\mathscr C}_2}
\Big (\msfrac{\sin (2\pi w_j t_j)}{\pi w_j}\Big )\\[.62ex]
& \quad \quad \quad \cdot \prod_{j\in {\mathscr C}_2}
e^{2\pi i w_j y_j} \prod_{j \notin {\mathscr C}}
e^{2\pi i w_j y_j} \MSB (f-g)\MSA d\vec{y}\,.
\end{align*}
It makes sense to rewrite this \vspace*{.62ex} as
\begin{align}\label{4.20}
& \prod_{j\in {\mathscr D}} \msfrac{1}{\Omega_j}
R_\ast \Big (\msfrac{w_j}{\Omega_j} \Big )
\prod_{j \in {\mathscr B}} \delta (w_j)
\prod_{j\notin {\mathscr C}_2}
e^{-2\pi i w_j t_j} \prod_{j\in {\mathscr C}_2}
\Big ( \msfrac{\sin (2\pi w_j t_j)}{\pi w_j} \Big )\\[.62ex]
&\quad  \cdot \int_{{\mathbb R}^k} \prod_{j\in {\mathscr C}_1}
\Big (\msfrac{i \sin (2\pi w_j y_j)}{i\pi w_j}\Big )
\prod_{j\notin {\mathscr C}_1}
e^{2\pi i w_j y_j} \MSB(f-g)\MSA d\vec{y}\,, \nonumber
\end{align}
the $d\vec{y}$ \!--\! integral herein simply being
\\
\[
\msfrac{D_{{\mathscr C}_1} (\varphi-\psi)(2\pi \vec{w})}
{\prod_{j\in {\mathscr C}_1} (i\pi w_j)}\,\,.
\]
\\
Notice incidentally that (4.20)  is 
manifestly Hermitian w.r.t. $(w_1,\dots, w_k)$.

Writing ${\mathscr D}_a = {\mathscr D} \cup {\mathscr C}_2$, the 
corresponding contribution to the overall estimate for 
$|F(t_j)-G(t_j)|$ thus becomes some real 
number having absolute value\pagebreak
\vspace*{.31ex}
\[
\leqq c_4 \int_{{\mathscr E}} 
\prod_{j \in {\mathscr D}_a} 
\Big ( \msfrac{1}{2\pi \Omega_j} + 
\msfrac{|\sin (2\pi w_jt_j)|}{2\pi |w_j|}\Big )
\prod_{j\in {\mathscr B}} \delta (w_j)
\cdot \msfrac{|D_{{\mathscr C}_1}(\varphi-\psi)(2\pi \vec{w})|}
{\prod_{j\in {\mathscr C}_1} |2\pi w_j|}\,\, d\vec{w}.
\]
\\
Needless to say: $c_4 = c_4(k)$ and 
$\{1,2,\dots, k\}\,=\, {\mathscr C}_1 
\cup {\mathscr D}_a \cup {\mathscr B}$\,.

Upon taking $v_j = 2\pi w_j$, $\MSAH$temporarily setting 
$\Omega_j = E_j/2 \pi$, and looking at \vspace*{-0.62ex}
\[
{\mathscr P}^\ast \,=\, \langle {\mathscr B}, 
{\mathscr C}_1, {\mathscr D}_a\rangle \vspace*{-0.62ex}
\]
in lieu of ${\mathscr P}$, relation (4.12) follows.\footnote{
Note that the form of the product for $j\in {\mathscr D}_a$
 already accounts for all \emph{choices} of ${\mathscr C}_2$
 associated with a given ${\mathscr D}_a$.}

To establish (4.12) for a perfectly general $F$, one simply 
convolves $F$ with a multivariate Gaussian 
(${\mathcal N}^{\MSA\langle k \rangle}_\varepsilon$, say) 
having density \vspace*{-0.62ex}
\[
\big ( \msfrac{1}{\varepsilon \sqrt{2\pi}} \big )^k \,\,
{\TS \prod^k_{j=1}}\, \exp 
\big ( - \msfrac{{u_j}^2}{2 \MSB \varepsilon^2}\big ) \vspace*{-0.62ex}
\]
and then lets $\varepsilon \rightarrow 0$ in obvious 
analogy to what was done earlier for $k=1$ and 
Theorem 3.3. \,Since \vspace*{1.08ex}
\[
{\TS \prod^k_{j=1}}\, \exp 
\big (-\sfrac{1}{2}\MSA\varepsilon^2\MSA {v_j}^2 \big ) \vspace*{.93ex}
\]
is \emph{even} w.r.t. $\MSZ\MSZ$each $v_j$, $\MSD$the action of 
$D_{\mathscr C}$ is trivially visualized and there is
 no difficulty securing a good majorant for each
${\mathscr P}$ \!--\! summand over ${\mathscr E}$. $\blacksquare$ 
\vspace*{-1.9ex}
\end{proof}

\subsection{}
When $k=1$, (4.12) is readily seen to provide
a very slight improvement in (3.1)\, [apart 
from choice of constants]. 
For $k \geqq 2$, however, matters are less satisfactory. 
There (4.12) turns out to have two features that 
tend to be somewhat {\it problematic} for purposes of applications.

The most egregious of these is the fact that, when $k>1$, 
the RHS of (4.12) typically tends to blow up anytime 
one or more of the entries in $(t_1,\dots,t_k)$ 
diverges to $\pm \MSA\infty$.

The difficulty stems from the ``sine'' portion of the 
${\mathscr D}$-terms. It suffices to look at the 
partition for which ${\mathscr D}=\{1,2,..., k\}$. 
Assume, for simplicity, that $\alpha$ can be taken
very large in (4.9). 
After utilizing the well-known relation\vspace*{-.125ex} 
\begin{equation}\label{4.21}
e^{iy} = \sum^N_{j=0} \frac{(iy)^j}{j!} + \Theta\, 
\frac{|y|^{N+1}}{(N+1)!}\vspace*{-.125ex}
\quad \mbox{(for $y \in {\mathbb R}$)}\MSA\footnote{
Cf. equation (5.1) below.}  
\end{equation}
to form obvious Taylor polynomials and rescaling things a bit, 
the operative ``inflationary'' mechanism is most easily 
appreciated [in a model computation 
with $k=3\MSA$] $\MSA$by contemplating
the fact that one has \vspace*{.26ex}
\begin{align*}
& \int^1_{1/T_1} \int^1_{1/B_2} \int^1_{1/B_3}
[u^c + v^c + w^c - \lambda (u^{\ell+1} +
v^{\ell+1} + w^{\ell+1})]\,\,
\frac{dw dv du}{w v u}\\[.62ex]
& \quad \geqq \int^1_{1/T_1} \int^1_{1/B_2}
\int^1_{1/B_3} [u^\ell + v^\ell + w^\ell - (\lambda u) u^\ell -
(\lambda v) v^\ell - (\lambda w) w^\ell]\,\,
\frac{dw dv du}{wvu}\\[.62ex]
& \quad \geqq \frac{1}{2} \int^1_{1/T_1}
\int^1_{1/B_2} \int^1_{1/B_3} 
\frac{u^\ell + v^\ell + w^\ell}{uvw} \,\,dw dv du\\[.62ex]
& \quad \geqq \frac{
1-2^{-\ell}}{2\ell}\,\,
[(\log B_2) (\log B_3) + (\log T_1)(\log B_3) +
(\log T_1) (\log B_2)] \vspace*{.62ex}
\end{align*}
anytime $T_1 \geqq 2, \,B_2 \geqq 2, \,B_3 \geqq 2,\,\, 
1\leqq c \leqq \ell, \, \lambda \in [0,1/2]$. \,Here
$T_1$ corresponds essentially to $|t_1|$.

The second snag is more subtle and -- as will be seen
momentarily -- has a  
predominantly ``operational$\MSH$'' nature.   
$\MSB$It arises in connection with those entries 
\[
\frac{D_{\mathscr C}(\varphi - \psi)}
{\prod_{j\in {\mathscr C}} v_j}\qquad 
(\equiv Q_{\mathscr C})
\]
in (4.12) for which $\card ({\mathscr C})\geqq 2$.  
$\MSB$We take ${\mathscr C} = \{1, \ldots,m \}$ 
$\MSA$ w.l.o.g. (and continue to assume $k > 1$). To 
explicate matters\footnote{(some readers may prefer 
to merely $\MSZ\MSZ$\emph{skim}$\MSZ$ the details 
that follow)}, it is 
helpful to first observe that
\begin{equation}\label{4.22} 
(D_1 \cdots D_m)f = \sfrac{1}{2^m}
(v_1 \cdots v_m)\MSB\int_{[-1,1]^m}\MSB
{\mathcal D}^{[m]} f (v_1 u_1,\ldots ,v_m u_m)\,d\vec{u}
\end{equation}
holds whenever $f \in C^m$. $\MSB$Here 
${\mathcal D}^{[m]} f$ is the obvious mixed partial.  
(There \linebreak is clearly no harm in suppressing, as we do, any variables 
that are inactive. \linebreak Notice too 
that the $d\vec{u}$-integral is  
automatically \emph{even} w.r.t. each $v_j$.)

Suppose now that $\vec{v}$ is situated in that portion of
$\prod^k_{j=1}\,[-\Omega_j,\Omega_j]$ on which
\[
\{|v_1| < \tau, \ldots, |v_m| < \tau \}\MSD .
\]
Here $\tau$ is some suitably small constant. \vspace*{.25ex} 
Think of $\|\MSA\vec{t}\MSAH\|$ as being bounded. The need to   
avoid spurious divergences in (4.12) 
growing out of \vspace*{1.5ex}
\small
\[
\int^{\tau}_0 \cdots \int^{\tau}_0 
\frac{\lambda ( v_1^{\ell+1} + \cdots + v_m^{\ell+1}) }
{v_1 \cdots v_m} \,\,
dv_m \cdots dv_1 = +\infty \MSE\MSE\MSC (\lambda >0)
\vspace*{1.5ex}
\]
\normalsize
when relation (4.21) is applied 
crudely over ${\mathbb R}^m$ 
strongly suggests that any Taylor approximations 
for 
$D_{\mathscr C} (\varphi - \psi)$ w.r.t.$\MSZ\MSZ\MSZ\MSZ$ 
$(v_1,\ldots,v_m)$ $\MSH$that one proposes to exploit  
$\MSB$be created$\MSH\MSB$ in a ``pre-factored'' $\MSZ$format 
$\MSZ$by means of \emph{either} (4.22)+(4.21) 
or $\MSH$else$\MSH$ (4.13)$\MSB$+$\MSD$a variant of (4.21) 
$\MSB$[$\MSAH$call it$\MSH$ (4.21)$^\sharp\MSB$]$\MSB$ which 
refers to the product function \vspace{.05ex} 
\[
\frac{\sin(y_1)}{y_1} \cdots \frac{\sin(y_m)}{y_m}
\MSD\MSD .\vspace*{.10ex}
\] 
The remainder term in relation (4.21)$^\sharp\MSA$ is readily
seen (after a bit of manipulation 
with \pagebreak multinomial expansions) to 
have magnitude at most\vspace*{-2.50ex}
\[ 
O_m (1)\MSH (|y_1| + \cdots + |y_m|)^{N+1} \vspace*{.10ex}
\]
for $\MSA y_\nu \in {\mathbb R}\MSA$ once 
the total $\vec{y}\MSD$--$\MSD$degree pushes 
past $N$. A closer analysis reveals that any \emph{fixed}
number $D \geqq 1$ can be inserted as a denominator in
${\TS \sum^{m}_{j=1}}\MSA |y_j|$. 

To ensure that everything remains well-defined here 
[$\MSH$in the first approach as well$\MSH$], one tacitly 
assumes in the foregoing that $\MSH$(4.9)$\MSH$ holds 
with some $\alpha \geqq m + N + 1$.

Matters receive a useful clarification when Taylor's
theorem with remainder written in \emph{integral} form   
is brought into the picture. (Recall that the
${\mathbb R}^m$-version of Taylor's theorem is an easy
consequence of the corresponding result 
over ${\mathbb R}^1$.)
By writing things out for the function 
$G = \prod^m_{j=1} \exp (i\xi_j)$, 
putting $\xi_j = y_j u_j$, and then averaging
w.r.t. $\vec{u}$ over $[-1,1]^{m}$, an \emph{explicit} form of
(4.21)$^\sharp$ immediately ensues.  The essential point here,
of course, is that
\begin{equation}\label{4.23}
\Big( \frac{\sin y}{y} \Big )^{\MSZ(a)} = \msfrac{1}{2} \MSA
\int^1_{-1} \MSA (iu)^a\MSA e^{i y u}\MSB du \quad
\mbox{for $a \geqq 0$}\MSC .
\end{equation}
The equation for $D_{\mathscr C}\varphi / (v_1 \cdots v_m)$ 
produced by this version of (4.21)$^\sharp$ is 
clear-{\linebreak}ly just
the earlier Taylor $\MSAH$manipulation$\MSAH$      
repeated for the augmented function  \linebreak 
$\prod^m_{j=1} \exp (i\xi_j)\MSZ\MSZ
\cdot\MSZ\MSZ\Omega\MSAH$, $\MSB$wherein $\xi_j = (v_j x_j)u_j$
and
\[
\Omega = \TS\prod^k_{j=m+1} e^{i v_j x_j}\cdot
(ix_1)\cdots (ix_m)\MSA dF,
\]
followed by an integration w.r.t. $(\vec{u},\vec{x})$, $\MSB$the part
involving $\MSB\vec{x}\MSB$ being done \emph{last}.  

Since ${\mathcal D}^{[m]} \varphi (w_1, \ldots,
w_m \MSA ;\MSA v_{m+1},\ldots ,v_k)$ can be viewed as a
continuous superposition of $\MSB\prod^m_{j=1} 
\exp(i w_j x_j)\MSZ\MSHZ\cdot\MSHZ\MSZ\Omega\MSB$, it is hardly 
surprising that formula 
(4.22) with $f = \varphi$$\MSC$ appears when exactly 
the same procedure is followed, but the integra-{\linebreak}tion 
w.r.t. $\MSH(\vec{u},\vec{x})\MSB$ is done in reverse order. 
Integrals having the form
\[
\int_{{\mathbb R}^k} \MSA (ix_1)^{a_1} e^{i w_1 x_1} \cdots
(ix_m)^{a_m} e^{i w_m x_m}\MSZ\MSZ\cdot\MSZ\MSZ\Omega
\]
are merely partial derivatives 
of ${\mathcal D}^{[m]}\varphi\MSH$;  $\MSD$as such, it is easily seen
that the Taylor developments of 
$D_{{\mathscr C}}\varphi /\MSC(v_1 \cdots v_m)$ obtained 
by first integrating the one attached to 
(4.21)$^\sharp$ and, then, that of ${\mathcal D}^{[m]}\varphi$
$\MSA$(\`{a} la (4.22)) actually have summands that are 
numerically identical {\it term-by-term}. 
\,Methods 1 and 2 for handling $Q_{{\mathscr C}}$ over the given
portion \vspace*{-.10ex} of $\prod^k_{j=1} [-\Omega_j, \Omega_j]$ 
are thus {\it equivalent}, at least to the extent that the 
sharper version of (4.21)$^\sharp$ is used.  

When $F$ is fixed, none of this presents any serious difficulty.
In settings, however, where $\varphi$ is basically given
as a product of many (scaled down, more \linebreak basic) 
characteristic functions $\varphi_{\nu}$, making any 
effective use 
of this multiplica-{\linebreak}tivity  
vis \`{a} vis $D_{\mathscr C} \varphi /\MSA(v_1 \cdots v_m)$ 
requires a bit of thought, lest unwelcome 
spuri-{\linebreak}ous divergences re-appear.

On a practical level, 
for run-of-the-mill $\varphi_{\nu}$, 
one finds two main options.  
\linebreak First:  viewing $F$ as an $M$-fold convolution, 
one can seek to  
obtain some sort of $\MSA$\emph{a priori}$\MSA$ hold 
on the magnitudes of   
appropriately large (4.9)$\MSB$-$\MSB$style 
moments of $\MSA dF \MSA$.  $\MSE$In certain settings, 
such information may be relatively easy to obtain by \linebreak 
means of a bit of algebra. $\MSC$(With bounds of any 
reasonable quality here, use of even the cruder version     
of $\MSAH$(4.21)$^{\sharp}\MSH$ may already 
turn out to be sufficient for one's needs.)\pagebreak 

The second option would be to exploit method 1; {\it i.e.}, 
come in by way of (4.22).  
Writing $f=\varphi = \prod \varphi_{\nu}$,  
one would simply expand ${\mathcal D}^{[m]}\varphi$ via 
a rule of Leibnitz type. \,This is not unreasonable 
since $m \leqq k$. (We tacitly assume 
here$\MSA$\footnote{Compare [Fel2, p.528 (problem 15)].} 
that each $\varphi_{\nu}\MSZ$ corresponds to a $dF_\nu\MSZ$ 
having finite moments out to order at least $m+N+1$.) 
Proceeding in this way enables one to 
achieve fairly quickly at least \emph{some} orderly use of the 
multiplicativity ``without any division worries.''  Depending on 
the form of $\varphi_\nu$, keeping $\tau$ sufficiently small may 
also facilitate the calculation of any relevant derivatives 
of $\log \varphi_{\nu}$. 

In situations where $\varphi_{\nu}$ and $\varphi 
= \prod \varphi_{\nu}$ are nicely-behaved \emph{analytic}
functions of $\vec{v}$, a third line-of-attack would be to
express the linear combination    
$D_{\mathscr C} \varphi$  
as a Cauchy-type integral w.r.t. $\{ v_j \}^m_{j=1}$.  Doing this 
produces a factorization formula akin to (4.22).

In all three of these approaches, it pays to observe that 
those portions of $\prod^k_{j=1} [-\Omega_j, \Omega_j]$ having, 
e.g.,
\[
\{ |v_1| < \tau \MSB, \ldots, |v_\ell| < \tau 
\MSB;\MSB |v_{\ell+1}| \geqq 
\tau \MSB, \ldots, |v_m| \geqq \tau \}\qquad (1 \leqq \ell \leqq m)
\]
can be easily treated by first expressing $D_{\mathscr C}\varphi$ as
$D_1 \cdots D_\ell \MSH g$, wherein $g$ is the explicit linear combination
$D_{\ell+1} \cdots D_m \varphi$, and then simply repeating the earlier
ideas with ${\mathbb R}^\ell$ in place of ${\mathbb R}^m$.

What is $\MSAH$evident$\MSAH$ from all this is that, in seeking to gain 
some sensible control on $\MSA Q_{\mathscr C}$ in (4.12) by way 
of Taylor approximations when $\varphi = \prod\MSH\varphi_{\nu}\MSAH$, 
there is in-{\linebreak}variably an element of $\MSH$\emph{context}$\MSH$  
that enters the  
picture even if, $\MSA$say, $\MSA\alpha \geqq 1000 k$.

\subsection{} 
This fact, which is not entirely surprising, prompts one 
to ponder the possibility of making a 
fundamental \emph{change in mindset} 
that would enable one to simply $\MSH$sidestep$\MSH$ the 
bulk of any contextual issues with $Q_{\mathscr C}$.

To that end,  
the following corollary of (4.22) 
$\MSA$[$\MSH$taken now with any $m \geqq 1\MSH$]$\MSA$ 
is highly suggestive, particularly when 
specialized to $\varphi - \psi$. $\MSC$Namely: for 
any $f \in C^h$ and $\MSH\ell \in [1,m]\MSH$, one always has
\begin{equation}\label{4.24}
|D_1 \cdots D_m \MSA f| \leqq |v_1|^{h/ \ell} \cdots
|v_{\ell}|^{h / \ell}\MSA \Big ( \TS \prod^{\prime}_
{\sigma}\MSA {\mathscr M}_{\sigma}\MSA f
\Big )^{1/ \binom{\ell}{h}}\vspace*{.20ex}\hspace*{.48em}, 
\end{equation}
wherein $h \in [1,\ell]\MSA$,$\MSC$   
$\sigma$ ranges over all subsets of $[1,m]$ for which 
\[
\card(\sigma) = h \quad\MSE \mbox{and} \quad\MSE \sigma  
\subseteqq \{ 1,2, \ldots,\ell \} \MSE,
\]
and one defines, e.g.,
\[
{\mathscr M}_{\{1,2,...,h \}}\MSA f \equiv
\max_{\varepsilon^{\MSA 2}_{\beta} = 1}\MSA \sup_{|u_\gamma| \leqq 1} 
\Big | \frac{\partial^{h}f}{\partial\xi_1 \cdots \partial\xi_h} 
\Big (
 u_1 v_1,...,u_h v_h\MSA; \MSB\varepsilon_{h+1}v_{h+1},...,
\varepsilon_{m}v_m \Big ) \Big | \MSC .
\]
The index $\gamma$ ranges over $\sigma$, while the 
``\raisebox{.35ex}{\footnotesize {$\MSC\prime\MSZ$} \normalsize}'' 
serves to stress that only $\sigma \subseteqq [1,\ell]$ are 
included in the product. 

The case $\MSA h=0 \MSA$ is readily seen to hold 
trivially if 
the ${\mathscr M}_{\sigma}\MSC$-$\MSC$product is 
given an obvious interpretation. $\MSE$At the 
same time, it is also worth noting that, 
{\it as a technical artifice}, 
taking $\MSA h = 1\MSA$ will 
already be sufficient for most purposes. 

To prove (4.24) per se, we first put $g = D_{h+1} 
\cdots D_m \MSA f$ and then think of $g$ as a linear
combination.  The sum of the absolute
values of its coefficients is $2^{m-h} / 2^{m-h} = 1$.
With $\{v_{h+1},...,v_m\}$ frozen, we now apply (4.22)  
but with $m$ replaced by $h$.  \,This gives:
\begin{align*}
 | D_1 \cdots D_m \MSA f| = |(D_1 \cdots D_h)\MSA g| & \leqq |v_1|
 \cdots |v_h| \MSA \big ( {\mathscr M}_{\{1,...,h\}} \MSA g \big ) \\
 & \leqq |v_1| \cdots |v_h| \MSA \big ( {\mathscr M}_{\{1,...,h\}} 
\MSA f \big ) \MSC . 
\end{align*}
(The first ${\mathscr M}$ is taken on ${\mathbb R}^h$.)  Similarly for 
any other $\sigma \subseteqq [1,\ell]$ having 
cardi-{\linebreak}nality $h$.  
Multiplying over the collection of all 
relevant $\sigma$, one clearly gets
\[ 
 |D_1 \cdots D_m \MSA f|^{\binom{\ell}{h}} \leqq 
 \TS \prod^{\ell}_{j=1}\MSA |v_j|^{\binom{\ell - 1}{h-1}}\MSA\cdot 
\MSA \big ( \TS \prod^{\prime}_{\sigma}
\MSA {\mathscr M}_{\sigma}\MSA f \big )\MSC .
\]
Since
\small
\[
\binom{\ell - 1}{h-1} = \msfrac{h}{\ell} \binom{\ell}{h} 
\quad \mbox{\normalsize whenever} 
\quad  1 \leqq h \leqq \ell \MSE , 
\]
\normalsize 
inequality (4.24) follows  
immediately.\footnote{When $f$ is the restriction of a 
characteristic function $\varphi$ to ${\mathbb R}^m$, there are 
of course bounds similar to (4.24) having 
exponent $\lambda / \ell$ ($0 < \lambda \leqq \ell$) in 
place of $h/ \ell$. $\MSAH$These follow 
immediately from (4.4) and were already referenced, albeit 
implicitly$\MSH$, in the proof of Theorem 4.8. The 
${\mathscr M}_{\sigma}$-portion is best written 
as ${\mathbb E}(\|\vec{x}\|^{\MSH\lambda}_{\star} )$, 
where $\|\vec{x}\|_{\star} \equiv \max \{\MSH 
|x_j|\MSH: 1 \leqq j \leqq \ell \}$. }

There exists a \emph{slight extension} of (4.24) that 
will $\MSAH$also$\MSAH$ turn out to be useful 
in connection with $Q_{\mathscr C}$.    
$\MSE$To explain it, we'll continue to work 
in the setting of \S 4.4
with ${\mathbb R}^n$ replaced by ${\mathbb R}^m$. In addition 
to the operators $\{ D_1,\MSHZ...\MSA,D_m \}$, we propose to 
look at
\[
E_j = I- D_j \MSE,\MSE (P_j f)(\vec{v}) 
= f[\MSA((1-\delta_{hj})v_h)^{m}_{h=1}\MSA] \MSE,\MSE \Delta_j = I - P_j
\]
for $1 \leqq j \leqq m$.  
By straightforward symbol-pushing, each of
these operators on $C({\mathscr E})$ is readily seen to commute with
the other $4m-1\MSH,$ and to be an idempotent   
(i.e., satisfy $T^2 = T$). One also sees that\vspace*{-.62ex}
\begin{align*}
D_j E_j  &= 0 \MSB,\MSD\MSD D_j P_j = 0 \MSB,\MSD\MSD D_j \Delta_j = 
D_j\MSB,\\[.93ex]
 &E_j P_j = P_j \MSE,\quad P_j \Delta_j = 0 \MSE;
\end{align*}
\[
 \hspace{12.5em} I = P_j + D_j + E_j \Delta_j \MSE .\hspace{9.5em}(4.25)
\]
Besides being an idempotent, each summand in (4.25)  
annihilates the other two.  
The associated direct sum 
decomposition of $C({\mathscr E})$ can be viewed as providing a 
kind of ``Boolean'' Taylor   
development w.r.t. $v_j$. Corresponding to (4.22), one easily checks that
\setcounter{equation}{25}
\begin{align*}
 (\Delta_1 \cdots \Delta_m)f &= (v_1 \cdots v_m)\MSA\int_{[0,1]^m}\MSA
 ({\mathcal D}^{[m]}f)(t_1 v_1,...,t_m v_m)\MSA d\vec{t} \\[.31ex]
 ( {\TS \prod^{m}_{j=1}} E_j\Delta_j  ) f &= {\TS \prod^{m}_{j=1}}
(\vsfrac{1}{2} v^{\MSA 2}_{j}) \MSA \int_{ [0,1]^m \times [-1,1]^{m} } 
({\mathcal D}^{[m,m]}f)((t_j u_j v_j))\MSAH t_1 \cdots t_m\MSAH 
d\vec{u}\MSA d\vec{t} \\[.31ex]
({\TS \prod^{m}_{j=1}} E_j\Delta_j ) f &= {\TS \prod^{m}_{j=1}}
 (\vsfrac{1}{2} v^{\MSA 2}_{j} ) \MSA 
\int_{ [-1,1]^m }{\TS \prod^{m}_{j=1}} (1-|\xi_j|)\cdot
({\mathcal D}^{[m,m]} f) ((\xi_j v_j)) \MSA d\vec{\xi}
\end{align*}
for $f \in C^m$ and $C^{2m}$, respectively.  The notations 
${\mathcal D}^{[m,m]}$ and $((t_j u_j v_j))$ are obvious abbreviations.

Let $1 \leqq n \leqq m$ and $r = m - n$.  \,Writing $S_{j} = 
\partial /\partial \xi_j\MSB$, $\MSB$we now define
\small
\[
T_j = \left \{\begin{array}{ll}
S^{\MSA 2}_j, &\mbox{\,if\,\,\,} j \leqq n \\
\\
S_j, &\mbox{\,if\,\,\,} j > n 
\end{array} \right \}\MSC .
\]
\normalsize
Fix any $\ell \in [1,n]$ and $\delta \in [0,r]$. $\MSH$Put
$L = \ell + \delta$.  In 
line with \vspace*{.15ex} 
$\{ 1,\ldots,\ell \}\MSD\cup\MSA$ \linebreak 
$\{ n+1,\ldots,n+\delta \}$ and the actions
of both $\MSA I = P_j + \Delta_j\MSA$ and (4.25)$\MSA$, 
for any $h \in [1,L]$ $\MSA$and  
$f \in C^{h}\MSZ\MSZ,\MSC\MSH C^{2h}$ (resp.), $\MSB$one 
has 
\small
\begin{equation}\label{4.26}
 |\MSA\Delta_{1} \cdots \Delta_{n}\MSA D_{n+1} \cdots D_{m} f\MSA| 
\leqq  2^{m-h} \big (
 {\TS \prod^{\ell}_{j=1}} |v_j| \cdot {\TS \prod^{\delta}_{j=1}}
|v_{n+j}| \big )^{h/L} \MSA \big ( {\TS \prod^{\prime\prime}_{\sigma}}
\MSB {\mathscr M}_{\sigma} \MSA f \big )^{1/ \binom{L}{h}}\vspace*{-.40ex} 
\end{equation}
\begin{equation}\label{4.27}
 |\MSA{\TS \prod^{n}_{j=1}} E_j\Delta_j \MSB {\TS \prod^{m}_{j=n+1}} D_j 
f\MSA| \leqq  2^{m-h} \big ( {\TS \prod^{\ell}_{j=1}} |v_j| \cdot 
{\TS \prod^{\delta}_{j=1}} |v_{n+j}| \big )^{h/L}\MSA \big ( {\TS 
\prod^{\prime\prime}_{\sigma}}\MSB{\mathscr M}_{\sigma}\MSA f \big)^
{1/ \binom{L}{h}}\vspace*{.46ex}
\end{equation}
\begin{equation}\label{4.28}
 |\MSA{\TS \prod^{n}_{j=1}} E_j \Delta_j \MSB {\TS \prod^{m}_{j=n+1}} D_j
 f\MSA| \leqq  2^{m-h} \big ( {\TS \prod^{\ell}_{j=1}} |v_j|^2  \cdot
 {\TS \prod^{\delta}_{j=1}} |v_{n+j}| \big )^{h/L} \MSA \big ( {\TS
\prod^{\prime\prime}_{\sigma}} \MSB {\mathscr N}_{\sigma}\MSA f \big )^
{1 /  \binom{L}{h}}\vspace*{1.65ex}
\end{equation}
\normalsize
wherein $\sigma$ ranges over all subsets 
of $[1,n+r]\MSC\MSC(\MSA = [1,m]\MSH)$ $\MSA$for which
\[
\card(\sigma) = h \MSC,\MSD\MSD \sigma \subseteqq  
\{ 1,...,\ell \} \cup \{ n+1,...,n+\delta \}  
\]
and one defines, e.g.,
\begin{align*}
 {\mathscr M}_{ \{1,...,h \}} f &= \widehat{\max_{\varepsilon^
{\MSA 2}_{\beta}= 0,1}} 
 \MSD \sup_{|u_{\gamma}| \leqq 1} \MSA \big | (S_1 \cdots S_h\MSA f)
 (u_1 v_1,...,u_h v_h\MSA;\MSB \varepsilon_{h+1} v_{h+1},...,\varepsilon_m 
 v_m) \big | \\
 {\mathscr N}_{ \{1,...,h \}} f &= \widehat{\max_{\varepsilon^
{\MSA 2}_{\beta}= 0,1}}
 \MSD \sup_{|u_{\gamma}| \leqq 1} \MSA \big | (T_1 \cdots T_h\MSA f)
 (u_1 v_1,...,u_h v_h
 \MSA ;\MSB \varepsilon_{h+1} v_{h+1},...,\varepsilon_m 
 v_m ) \big |
\end{align*}
when $h \leqq \ell$.  Similarly for a split set   
$\{ 1,...,h_{1} \} \cup \{ n+1,...,n+h_{2} \} $.  \,In each 
instance, the index $\gamma$ ranges over $\sigma$ $\MSAH$and   
``\,\raisebox{-.56ex}{$\widehat{\phantom{n}}$}\,'' 
means$\MSH$ ``$\MSAH$omit $\MSH\varepsilon_\beta 
= 0$ $\MSAH$if$\MSAH$ $\beta > n\MSH . $''

Estimate (4.27) follows immediately from (4.26). $\MSH$The 
proofs of (4.26) and (4.28) are easy mimics of that of 
(4.24).  To keep things simple, one makes an intentional 
overshoot in the coefficient $2^{m-h}$.  

Just as before, the case ``$\MSH h = 0 \MSA$'' holds 
trivially when the product over $\sigma$ is given a 
natural interpretation. 

In regard to any applications with $f = \varphi-\psi$, 
the $\MSA$key$\MSH$ point in (4.24) and (4.26)--(4.28) is 
that $h$ can be taken equal to $1$. $\MSB$This imposes 
only the mild restriction 
that hypothesis $\MSA$(4.9)$\MSA$ be true 
with $\alpha = \MSH 1\MSA$ or $\MSH 2\MSH$ (as opposed to 
some-{\linebreak}thing more like $\alpha \geqq m+N+1$, in our 
earlier notation).

\subsection{}  
Before opting to place any further restrictions on $\alpha\MSH$,   
it seems only prudent to ask 
if there might not exist some  
easy variant of (4.12) [applicable 
for \linebreak \emph{any} $\MSH\alpha\MSH$] in which 
the structural hassles  
that we identified earlier (in \S 4.9) 
are \vspace*{-.05ex}simply absent.

$\MSD$The following result, based on truncation, 
achieves this goal in a \vspace*{-.20ex} relatively cheap   
way.

\phantom{mmm}
\pagebreak[4]

\phantom{mmm}\vspace*{-2.2ex}

\begin{thm48bisA}
$\MSA$Given the situation of Theorem 4.8 and any number 
$\Delta > 1$. Assume further that 
$\min (\Omega_1,\dots, \Omega_k)> 1$. $\MSH$Putting $v^{\bullet} 
\equiv v / \min \{1,\Delta |v| \}\MSB$ \emph{(and, say,}
$0^{\bullet} = 1/ \Delta\MSH)\MSH$, we then have 
\vspace*{-3ex}

\begin{align}\label{4.29}
 |F(t_1,&\dots, t_k) - G(t_1,\dots, t_k)| \\[.93ex]
 &\leqq  c_5 \int^{\Omega_1}_{-\Omega_1} \cdots 
\int^{\Omega_k}_{-\Omega_k} 
\frac{|\varphi (\vec{v})-\psi (\vec{v})|}
{|v^{\bullet}_1| \cdots |v^{\bullet}_k|} 
\MSB d\vec{v} \,+\, c_6 \sum^k_{\ell=1} \frac{m_\ell}{\Omega_\ell}
\nonumber\\
  &\hphantom{ni}+(k+1)\Delta^{-\alpha} \int_{{\mathbb R}^k}
(\max_{1\leqq j \leqq k} |x_j|)^\alpha\, 
(dF \,+\, |g| d\vec{x})\nonumber
\end{align}
for every $(t_1,\dots,t_k) \in {\mathbb R}^k$, wherein $c_5$ and $c_6$
are certain positive constants that depend solely on $k$. 
(Note the absence of $\vec{t}$ on the right hand side.)   
Each factor
$1/ |v^{\bullet}_j|$ in (4.29) can be replaced, 
whenever convenient, by  
the larger \vspace*{-.75ex}value
\[
\frac{\Delta}{|v^{\blacktriangle}_j|}\MSD, 
\quad\MSD\mbox{wherein}
\quad\MSE\MSE v^{\MSZ\blacktriangle} \equiv 
\frac{v}{ \min \{1, |v| \}} 
\MSD\MSD .
\vspace*{-.31ex}
\]
\end{thm48bisA}

\begin{proof}
It is useful to begin with an intermediate result.
Treating $\Delta$ as a natural cut-off, we first write
\[
t^\ast_j = \left \{ \begin{array}{rl}
t_j, & |t_j| \leqq \Delta\\[.62ex]
\Delta, & t_j > \Delta\\[.62ex]
-\Delta, & t_j < -\Delta
\end{array}\right \}
\]
for every index $j$. Since
\begin{align*}
&\hspace*{1.9em} |F(t_1, \eta_2,\ldots, \eta_k) - F(t^\ast_1, 
\eta_2,\ldots, \eta_k)| \,\leqq\, \Delta^{-\alpha}
\int_{{\mathbb R}^k} |x_1|^\alpha dF(\vec{x})\\
& \hspace*{19ex}\vdots\\
&|F(\eta_1,\ldots, \eta_{k-1}, t_k) - 
F(\eta_1,\ldots, \eta_{k-1}, t^\ast_k)| \,\leqq\,
\Delta^{-\alpha} \int_{{\mathbb R}^k} 
|x_k|^\alpha dF(\vec{x})\MSC,
\end{align*}
we immediately see that
\begin{equation}\label{4.30}
|F(t_1,\ldots, t_k) - F(t^\ast_1,\ldots,t^\ast_k)| \,\leqq\,
k\Delta^{-\alpha} \int_{{\mathbb R}^k}
\big ( \max_{1\leqq j\leqq k} |x_j|\big )^\alpha
\,dF(\vec{x})\MSC. 
\end{equation}
Similarly for $G$ and $|g|d\vec{x}\MSA$. \,Observe, however, that one
has
\[
|F(t_j)-G(t_j)| \,\leqq\, |F(t_j)-F(t^\ast_j)|
+ |F(t^\ast_j)-G(t^\ast_j)| + 
|G(t^\ast_j) - G(t_j)|
\]
in an obvious shorthand. Since 
$|\sin u| \leqq \min\{1, |u|\}$, by \vspace*{.05ex}
applying (4.12)
to $\MSA|F(t^\ast_j)-G(t^\ast_j)|\MSA$ and then noting  
that, on $\{|v_j| \leqq \Omega_j\}$,
\[
\msfrac{1}{\Omega_j} + \min \big (|t^\ast_j|, 
\msfrac{1}{|v_j|}\big )= \min \big ( \msfrac{1}{\Omega_j}+ |t^\ast_j|,
\MSB\msfrac{1}{\Omega_j} + \msfrac{1}{|v_j|}\big )\leqq
\min \big ( 2\Delta, \MSB\msfrac{2}{|v_j|}\big )
= \msfrac{2}{|v^{\bullet}_j|}\MSC\MSC , 
\]
one immediately finds \,[as a natural relative of 
(4.12)]\, that\vspace*{.62ex}
\begin{align}\label{4.31}
& |F(t_1,\ldots, t_k) - G(t_1,\ldots, t_k)|\\[1.24ex]
& \MSC\leqq 2^{k} c_1 \sum_{{\mathscr P}}
\int^{\Omega_1}_{-\Omega_1} \cdots \int^{\Omega_k}_{-\Omega_k}
|D_{\mathscr C}(\varphi-\psi)|\prod_{j\in {\mathscr C}}
\frac{1}{|v_j|} \prod_{j\in{\mathscr D}} 
\frac{1}{|v^{\bullet}_j|}
\prod_{j\in {\mathscr B}}\delta (v_j) 
d\vec{v}\nonumber\\[.50ex]
&\hspace*{1em} + c_2 \sum^k_{\ell=1} \frac{m_\ell}{\Omega_\ell}
+ k \Delta^{-\alpha}\int_{{\mathbb R}^k}
(\max_{1\leqq j \leqq k}|x_j|)^\alpha
\MSA(dF + |g| d\vec{x})\vspace*{.31ex}\nonumber
\end{align}
for every $(t_1,\ldots, t_k)\in {\mathbb R}^k$.  
Each ${\mathscr D}$$\MSA$-$\MSA$entry can be improved slightly since,
a couple lines earlier, we could have  
just as easily written 
\begin{align*}
\msfrac{1}{\Omega_j} + \min \big (|t^{\ast}_j|, 
\msfrac{1}{|v_j|} \big ) & =
\min \big ( \msfrac{1}{\Omega_j} + |t_j|, 
\MSC\msfrac{1}{\Omega_j}+ \Delta, 
\MSC\msfrac{1}{\Omega_j} + \msfrac{1}{|v_j|} \big )\\
& \leqq  \min \big ( \msfrac{2}{\Omega_j} + 2 |t_j|,
\MSC 2\Delta, 
\MSC \msfrac{2}{|v_j|} \big) \\
& =  2\min \big ( \msfrac{1}{\Omega_j}+|t_j|, 
\msfrac{1}{|v^{\bullet}_j|} \big )\MSD.
\end{align*}
Letting $\Delta \rightarrow \infty$ in this improved 
version of things  
basically reproduces (4.12).

Going back to (4.31), the issue is now one of 
trying to \emph{restructure} things so that all 
partitions but one (${\mathscr D} = \{1,\dots,k\}$) somehow 
disappear. \,For this purpose, we need to 
return to the proof of Theorem 4.8 and make several 
key modifications aimed at mollifying the 
$1/\pi iw$$\MSC$-$\MSC$singularity in $\widetilde{f}_\Omega$ 
and $\widetilde{g}_\Omega$ (cf. (4.14)).

To simplify matters, it is helpful to first treat the 
case in which $F$ has a continuous density 
function $f(\vec{x}$). We put
\[
L = 2\Delta
\]
and initially work with \emph{any} numbers 
$\{\Omega_j\}^k_{j=1}$ situated within $(1/2\pi, \infty)$. 
We also set\vspace*{.5ex}
\[
F_L (\vec{\xi}) = \int_{{\mathscr A}} f(\vec{x}) d\vec{x}
\quad \mbox{\,and\,}\quad G_L(\vec{\xi}) = \int_{{\mathscr A}} 
g(\vec{x}) d\vec{x}\,,\vspace*{.5ex}
\]
wherein\vspace*{.5ex}
\[
{\mathscr A} = (-L, \xi_1] \times \cdots \times 
(-L, \xi_k]\vspace*{.5ex}
\]
and each $\MSA\xi_j\MSA$ is understood to exceed $-L$. \,Since 
\[
{\TS \prod^k_{j=1}} (-\infty, \xi_j] - {\mathscr A} \MSD\subseteqq\MSD
\{\vec{x} \in {\mathbb R}^k: x_\ell \leqq -L
\mbox{ for some index $\ell$\}}\MSA,
\]
it is self-evident that
\begin{equation}\label{4.32}
\left \{\begin{array}{l}
0\leqq F(\vec{\xi}) - F_L(\vec{\xi}) \leqq
L^{-\alpha} \int_{{\mathbb R}^k }
\big (\max_{1 \leqq j \leqq k} |x_j|\big )^\alpha
f \MSA d\vec{x}\\[1.0ex]
\hphantom{xx}|G(\vec{\xi})-G_{L}(\vec{\xi})| \leqq
L^{-\alpha} \int_{{\mathbb R}^k}
\big (\max_{1 \leqq j \leqq \ell} |x_j|\big )^\alpha
|g| \MSA d\vec{x}
\end{array}\right \}\,.
\end{equation}
\pagebreak

The plan is to 
now bound $\MSA |F_L(\vec{\xi})-G_L(\vec{\xi})| \MSA$ 
by mimicking the  
considerations utilized near equation (4.16) in the  
proof of Theorem 4.8. 
With $\vec{\xi}$ temporarily frozen, we define
\[
\chi_j(x) = \chi_{(-L, \xi_j]}(x) \qquad (1\leqq j \leqq k)
\]
and observe that
\begin{align*}
\chi_{{\mathscr A}} (\vec{x}) & = 
\chi_1(x_1) \cdots \chi_k (x_k)\\[.62ex]
F_L (\vec{\xi})-G_L(\vec{\xi}) & =
\int_{{\mathbb R}^k} (f-g)
\MSC\chi_1(x_1)\, \cdots \,\chi_k(x_k)\,d\vec{x}\,.
\end{align*}
Dropping the subscript $j$, we also note (cf. (2.10)) 
that\vspace*{.5ex}
\begin{equation}\label{4.33}
\frac{b[\Omega(x+L)]-B[\Omega (x-\xi)]}{2}
\leqq \chi_{(-L, \xi]}(x) \leqq
\frac{B[\Omega (x+L)]-b[\Omega(x-\xi)]}{2}\vspace*{.5ex}
\end{equation}
for all $x \in {\mathbb R}$ (the points $-L$ and $\xi$ 
being handled via trivial limits). By \linebreak Theorem 2.2(a), 
the extreme members of (4.33) have a difference equal 
to $K[\Omega (x+L)] + K[\Omega (x-\xi)]$.

Following (4.14), we express the minorant portion of 
(4.33) as a Fourier transform
\begin{align*}
\widetilde{f}_\Omega (x) & = \frac{1}{2} 
\oint^\Omega_{-\Omega} \Big \{
\frac{e^{2\pi i Lw}}{\pi iw}-
\frac{e^{-2\pi i\xi w}}{\pi iw}\Big \}
e^{2\pi ixw}dw\\[2mm]
& + \frac{1}{2} \int^\Omega_{-\Omega}
\Big \{ \frac{e^{2\pi iLw}}{\Omega}\, R_b
\big ( \frac{w}{\Omega}\big )-
\frac{e^{-2\pi i\xi w}}{\Omega} \, R_B
\big ( \frac{w}{\Omega}\big )\Big \}
e^{2\pi ixw} dw
\end{align*}
and proceed similarly with the majorant (calling it 
$\widetilde{g}_\Omega$). Each ``brace'' is clearly Hermitian 
w.r.t. $w$. In marked contrast, however, to what happened 
previously, the \emph{first brace} is now a \emph{continuous} 
function on $[-\Omega, \Omega]\MSA$; $\MSE$its value is simply
\[
2\MSB\frac{\sin [\pi (L+\xi)w]}{\pi w}\MSB e^{\pi i(L-\xi)w}\MSC.
\]
At the same time, it is also natural to write
\begin{align*}
{\mathscr R}^- (w) & = \frac{e^{2\pi iLw}}{\Omega}\,
 R_b \big (\frac{w}{\Omega} \big )-
\frac{e^{-2\pi i \xi w}}{\Omega} \, R_B
\big ( \frac{w}{\Omega} \big );\\[.62ex]
{\mathscr R}^+ (w) & = \frac{e^{2\pi iLw}}{\Omega}\,
R_B \big (\frac{w}{\Omega} \big )-
\frac{e^{-2\pi i \xi w}}{\Omega}\, R_b 
\big (\frac{w}{\Omega}\big )\,.
\end{align*}
One clearly has
\[
|{\mathscr R}^\pm (w)| = \frac{O(1)}{\Omega}
\]
with an implied constant that is absolute (the 
value \small {$ 15 / 4 $} \normalsize 
certainly works here as a bit of 
elementary calculus shows).\vspace*{.31ex}

With the essential components all in place, the earlier 
\vspace*{.19ex} reasoning [beginning at (4.16)] is 
readily repeated with 
${\mathscr P} = \langle \phi, {\mathscr C}, {\mathscr D}\rangle$ 
and leads to
\small
\begin{align*}
&|F_L(\vec{\xi})-G_L(\vec{\xi})|\\
&\hspace{2em} \leqq
c_7 \int^{\Omega_1}_{-\Omega_1} \cdots \int^{\Omega_k}_{-\Omega_k}
|\varphi (2\pi \vec{w})-\psi (2\pi \vec{w})|
\prod^k_{j=1} \Big (\frac{|\sin \pi w_j(L+ \xi_j)|}{\pi|w_j|}
+ \frac{1}{\Omega_j}\Big ) d\vec{w}\\
&\hspace{2.6em} + 2^k c_3 \MSD\sum^k_{\ell=1} 
\frac{m_\ell}{\Omega_\ell}\MSB.
\end{align*}
\normalsize
The coefficient $\MSAH2^{k}\MSA c_3\MSAH$ arises 
from (4.18), \,the fact that
\[
\int^\infty_{-\infty} \{K[\Omega_\ell (x+L)] + 
K[\Omega_\ell (x-\xi)]\} dx = \frac{2}{\Omega_\ell}\MSC ,
\]
and the possibility that each $\delta_j, \varepsilon_j$ [in Lemma 4.3] 
can theoretically become as large as $2$ (cf. just after (4.33)).

In situations where $|\xi_j| \leqq \Delta$, we clearly 
have \vspace*{.46ex}
\small
\begin{align*}
\frac{|\sin \pi w_j(L+\xi_j)\MSA|}{\pi |w_j|}
 \leqq &\min \Big ( \frac{1}{\pi |w_j|}, |L+ \xi_j|\Big )
\leqq \min \Big ( \frac{1}{|w_j|}, \frac{3}{2} L\Big )\\[.93ex]
\frac{1}{\Omega_j} + \frac{|\sin (\ast\ast\ast)|}{\pi|w_j|}
 \leqq \min &\Big ( \frac{1}{\Omega_j}+ \frac{1}{|w_j|},
\frac{1}{\Omega_j} + \frac{3}{2} L\Big ) \leqq
\min \Big ( \frac{2}{|w_j|}, 5L \Big )\vspace*{.46ex}
\end{align*}
\normalsize
$\vphantom{ 2^{X^{X^Q}} }$on $\{|w_j| \leqq \Omega_j \}$, 
\,since $1/\Omega_j < 2\pi < \pi L$. 
\,Here, \vspace*{-.31ex}then,
\begin{align*}
 |F_L(\vec{\xi})-G_L(\vec{\xi})| & \leqq
5^k c_7\MSA\int^{\Omega_1}_{-\Omega_1} \cdots
\int^{\Omega_k}_{-\Omega_k} |\varphi (2\pi\vec{w})-
\psi (2\pi \vec{w})| \prod^k_{j=1}
\min \Big (\frac{1}{|w_j|}, L\Big ) d\vec{w}\\[-.31ex]
& + 2^k c_3\MSC \sum^k_{\ell=1} \frac{m_\ell}{\Omega_\ell}
\,.\vspace*{-.31ex}
\end{align*}

Putting $v_j=2\pi w_j$ and temporarily writing $\MSA \Omega_\ell =  
E_\ell/2\pi \MSA$ now gives\vspace*{-.18ex}
\begin{align*}
 |F_L(\vec{\xi})-G_L(\vec{\xi})| 
& \leqq 5^k c_7\MSA \int^{\Omega_1}_{-\Omega_1} \cdots
\int^{\Omega_k}_{-\Omega_k} |\varphi (\vec{v})-\psi (\vec{v})|
\prod^k_{j=1} \min \big (\frac{1}{|v_j|},  \Delta) d\vec{v}\\[-.31ex]
& + 2^k (2\pi c_3) \sum^k_{\ell=1} \frac{m_\ell}
{\Omega_\ell}\vspace*{-.18ex}
\end{align*}
for the \emph{original} $\MSC\Omega_j$ 
satisfying $\min (\Omega_1,\ldots, \Omega_k)> 1$. 
$\MSZ\MSHZ$(Observe 
that $2\pi c_3$ is simply\linebreak the coefficient $c_2$ that 
occurred earlier in (4.12); $\MSE$cf. (4.18).)
Relation (4.29) follows upon taking $\xi_j = t^\ast_j$, \,noticing 
that
\begin{align}\label{4.34}
|F(t_j)-G&(t_j)| \leqq
|F(t_j)-F(t^\ast_j)| + |F(t^\ast_j)-F_L(t^\ast_j)|\hspace*{6.3em}\\[.24ex]  
&\phantom{m} + |F_L(t^\ast_j)-G_L(t^\ast_j)|+ |G_L(t^\ast_j)-G(t^\ast_j)| +
|G(t^\ast_j)-G(t_j)|\MSB,\nonumber
\end{align}
and finally substituting (4.30) + (4.32).

The case of an arbitrary $F$ is handled by the usual convolution argument. 
In this connection, it may be worthwhile to point out that, while
\begin{align*}
{\mathbb E} [\MSC(\max_j |U_j \pm Y_{j}|)^\alpha ]^{1/\alpha}
& \leqq {\mathbb E}[\MSC(\max_j |U_j|+\max_j |
Y_{j}|)^\alpha]^{1/\alpha}\\
& \leqq {\mathbb E}[\MSC(\max_j |U_j|)^\alpha ]^{1/\alpha}
+ {\mathbb E} [\MSC(\max_j |Y_j|)^\alpha]^{1/\alpha}
\end{align*}
is completely standard for $\alpha \geqq 1$, the elementary inequality
\[
(|x_1| + |x_2|)^\omega \leqq |x_1|^\omega + |x_2|^\omega 
\quad \mbox{for \quad $0<\omega < 1$}
\]
leads to\vspace*{-.15ex}
\begin{align*}
{\mathbb E}[\MSC(\max_j |U_j \pm Y_{j}|)^\alpha]
& \leqq {\mathbb E} [\MSC(\max_j |U_j|
+\max_j |Y_{j}|)^\alpha]\\
& \leqq {\mathbb E}[\MSC(\max_j |U_j|)^\alpha +
(\max_j |Y_{j}|)^\alpha ]\\
& = {\mathbb E}[\MSC(\max_j |U_j|)^\alpha ] +
{\mathbb E}[\MSC(\max_j|Y_{j}|)^\alpha ]\\[-5ex]
\end{align*}
when $0<\alpha <1$. In either situation, taking a 
multivariate 
Gaussian ${\mathscr N}^{\langle k \rangle}_\varepsilon$ 
for \linebreak $Y$ and letting $\varepsilon \rightarrow 0$ 
clearly leads to no change 
in constants in \vspace*{.15ex}(4.29). \bx 
\end{proof}

By making a slight adaptation in the foregoing argument, the following 
result parallel to Theorem 4.8$\MSH$A is obtained virtually 
\vspace*{.10ex}immediately.

\begin{thm48bisB}
Given the situation of Theorem 4.8 and any number $D > 0$.  Assume 
further that $\min (\Omega_1,\ldots,\Omega_k ) > 1$.  Writing
$v^{\bullet} = v/ \min\{1,\Delta |v| \}$ with 
$\Delta = 1+D$,  
and taking \\[-.75ex]
\[
{\mathscr A}_{ab} \MSD=\MSD {\TS \prod^{k}_{j=1}}\MSH (a_j,b_j] \MSD
\subseteqq\MSD {\mathbb R}^{k}\MSC ,\\[.31ex]
\]
we then have, in the sense of measures,\\[-.62ex]
\begin{equation}\label{4.35}
|F\{{\mathscr A}_{ab}\} - G\{{\mathscr A}_{ab}\}|
  \leqq c_8 \int^{\Omega_1}_{-\Omega_1}\cdots 
 \int^{\Omega_k}_{-\Omega_k} \frac{|\varphi(\vec{v}) -
 \psi(\vec{v})|}{|v^{\bullet}_1|\cdots|v^{\bullet}_k|}
 \MSB d\vec{v} \MSB+\MSB c_9 \sum^{k}_{\ell = 1} \frac{m_{\ell}}
 {\Omega_{\ell}}\\[.62ex]
\end{equation}
whenever $\MSB 0 \leqq b_j - a_j \leqq D\MSB$.  $\MSB$The 
coefficients $c_8$ and $c_9$ will depend solely on $k$.\\[-.62ex]
\end{thm48bisB}

\begin{proof} 
The crux of the matter\vspace*{.09ex} begins just prior to 
(4.33); \,one simply replaces $-L$ by $a_j$ and $\xi_j$ by $b_j\MSA$,
and proceeds from there.  Taking $\Omega_j > 1/2 \pi$ 
as \vspace*{.09ex}before, 
the key observation is that, on $\{ |w_j| \leqq \Omega_j \}$,
\begin{align*}
\msfrac{1}{\Omega_j} \MSZ+\MSZ \msfrac{|\sin \pi w_j 
(b_j - a_j)|}{\pi |w_j|}
 &\MSB\leqq\MSB \msfrac{1}{\Omega_j} \MSZ+\MSZ \min 
\Big ( \msfrac{1}{\pi |w_j|}\MSA,
 \MSB |b_j - a_j| \Big ) \\
 &\MSB\leqq\MSB  2 \min \Big ( \msfrac{1}{|w_j|}\MSA,\MSB \pi + D \Big )\MSC .
\end{align*}
This quickly produces 
(4.35) with $c_8 = 2^k c_7$ and $c_9 = 2^k(2\pi c_3) = 2^{k}c_2$. 
$\MSC$\bx
\end{proof} 
\pagebreak

As a $k$$\MSA$-$\MSA$dimensional counterpart to Corollary 3.4, one 
now has

\begin{corollary}
$\MSE\MSC$Given probability distributions $\MSB F_n\MSB$ 
and $\MSB G\MSB$ on $\MSB {\mathbb R}^k\MSB$ having characteristic 
functions $\MSA \varphi_n \MSA$ and $\MSA \psi \MSA$. Assume that $G$
has a continuous density function $\MSA g(\vec{x}) \MSA$ 
which satisfies 
(4.9) and (4.10) of Theorem 4.8.  Assume further that \\[.78ex]
(i)$\phantom{i}$ \quad 
$\int_{{\mathbb R}^k} \MSC (\|\vec{x}\|_\infty)^\alpha \MSB dF_n(\vec{x}) 
= O(1) \MSB$; \\[.78ex]
(ii) \quad $\varphi_n (\vec{v}) \rightarrow \psi(\vec{v})$ pointwise
on ${\mathbb R}^k$ as $n \rightarrow \infty \MSB$. \\[.78ex]
We then have
\[
F_n(\vec{x}) \rightarrow  G(\vec{x})
\]
uniformly on ${\mathbb R}^k$.  An analogous result holds when $n$ is
replaced by a continuous variable $\xi$.
\end{corollary}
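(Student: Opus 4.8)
The plan is to derive the $k$-dimensional corollary from Theorem 4.8$\MSH$A in exactly the way Corollary 3.4 was derived from Theorem 3.3, letting the Lebesgue dominated convergence theorem do the real work. First I would apply Theorem 4.8$\MSH$A with $F = F_n$, $dG = g\,d\vec x$ (legitimate, since $G$ has a continuous density $g$ satisfying (4.9) and (4.10)), a single cut-off $\Omega_1 = \cdots = \Omega_k = \Omega > 1$, and a parameter $\Delta > 1$ still to be chosen. This gives, at every $\vec t \in {\mathbb R}^k$,
\[
|F_n(\vec t) - G(\vec t)| \,\leqq\, c_5 \!\int_{[-\Omega,\Omega]^k}\!\! \frac{|\varphi_n(\vec v)-\psi(\vec v)|}{|v^{\bullet}_1|\cdots|v^{\bullet}_k|}\,d\vec v \,+\, \frac{c_6}{\Omega}\sum^k_{\ell=1} m_\ell \,+\, (k+1)\Delta^{-\alpha}\!\int_{{\mathbb R}^k}\!\big(\textstyle\max_j|x_j|\big)^\alpha\big(dF_n + |g|\,d\vec x\big).
\]
The key structural point I would stress is that, by the very definition of $v^{\bullet}$, one has $1/|v^{\bullet}_j| = \min(1/|v_j|,\Delta) \leqq \Delta$, so the first integrand is bounded by $2\Delta^k$ on the compact box $[-\Omega,\Omega]^k$ --- there is no singularity and no improper-integral subtlety. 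This is precisely why I would route the argument through Theorem 4.8$\MSH$A rather than through (4.12) itself.

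Next I would run a three-$\varepsilon$ selection of constants, in the right order. Fix $\varepsilon > 0$. Using hypothesis (i) together with the (finite) second integral in (4.9), the supremum over $n$ of $\int_{{\mathbb R}^k}(\max_j|x_j|)^\alpha(dF_n + |g|\,d\vec x)$ is finite; hence I first choose $\Delta = \Delta(\varepsilon)$ so large that the third term above is $< \varepsilon/3$ \emph{uniformly in $n$}. With $\Delta$ fixed, I choose $\Omega = \Omega(\varepsilon) > 1$ so large that $c_6\,\Omega^{-1}\sum_\ell m_\ell < \varepsilon/3$. Finally, with both $\Delta$ and $\Omega$ frozen, I invoke dominated convergence on $[-\Omega,\Omega]^k$: by hypothesis (ii), $\varphi_n - \psi \to 0$ pointwise and the integrand is dominated by the constant $2\Delta^k$, so there is an $N$ beyond which the first term is $< \varepsilon/3$ as well. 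Combining the three estimates yields $\sup_{\vec t}|F_n(\vec t) - G(\vec t)| < \varepsilon$ for $n \geqq N$, i.e.\ uniform convergence. For the continuous-parameter version one argues verbatim (the dominated-convergence step being valid along every sequence $\xi_m \to \infty$), or one simply reasons by contradiction as in the proof of Corollary 3.4.

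The one step I expect to require genuine care --- rather than mechanical transcription --- is the \emph{uniform-in-$n$} control of the truncation remainder in (4.29); this is exactly where the $O(1)$ moment bound in hypothesis (i) is needed, as opposed to mere finiteness of the $\alpha$-th moment of each $F_n$ individually. Everything else is the several-variable shadow of the one-variable reasoning already carried out for Corollary 3.4, with Theorem 4.8$\MSH$A playing the role of Theorem 3.3.
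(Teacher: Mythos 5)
Your proposal is correct and follows essentially the paper's own route: the paper likewise deduces the corollary from Theorem 4.8$\MSH$A (or 4.8$\MSH$B) "as the cut-off parameter $\Delta$ is moved upward," and your three-$\varepsilon$ bookkeeping --- uniform-in-$n$ truncation control from hypothesis (i), then $\Omega$ large, then dominated convergence on the compact box where $1/|v_j^{\bullet}|\leqq \Delta$ --- simply makes explicit what the paper compresses into "an easy consequence." Your handling of the continuous parameter (dominated convergence along sequences, or contradiction) also matches the treatment already given for Corollary 3.4.
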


\begin{proof}
An easy consequence of either Theorem 4.8$\MSH$A or 4.8$\MSH$B as the
the cut-off parameter $\Delta$ is moved upward incrementally. $\MSB$In 
the case of 4.8$\MSH$B, one exploits 
the $\MSZ$\emph{double}$\MSZ$ cut-off idea implicit in (4.30)+(4.32) 
with, say, $\vsfrac{1}{4} \Delta$ in place of $\Delta$ \linebreak 
and keeps   
$\MSA\xi_j \in [-\vsfrac{L}{2},\MSH\vsfrac{L}{2}]\MSA$ in 
relation (4.32). $\MSB$The 
essential observation is then (4.34). $\MSB$See (4.6)+(4.4) 
concerning the necessary H\"{o}lder estimates. $\MSB$(Recall 
$\MSA$\emph{too} $\MSB$\S3.6\, and our earlier comment about 
continuous 
distributions ${\mathscr P}$.)$\MSE\MSE\MSE$\bx  
\vspace*{.15ex}
\end{proof}

If (i) holds with 
some $\alpha > \ell$ ($\ell \in {\mathbb Z}^+$), and 
${\mathcal D}$ is any partial 
derivative w.r.t.$\MSZ$ $\vec{v}\MSA$ of 
order $\leqq \ell$, the assertion 
$F_n \rightarrow G$ readily implies 
${\mathcal D}\varphi_n \rightrightarrows 
{\mathcal D}\psi$ on compacta.  
Similarly for the associated moments 
and $\MSA$absolute moments$\MSA$ of $dF_n$.

\subsection{}
$\MSZ\MSZ$Our third, and final, variant of Theorem 4.8 will 
arise from taking $\alpha = 1$ in $\MSH$(4.9)$\MSH$, $\MSA$and 
then revisiting the \emph{proof}$\MSZ$ of Theorem 4.8 aided  
by the circle$\MSA$-$\MSA$of$\MSA$-$\MSA$ideas 
looked at in \S4.10. $\MSB$Though its   
statement over $\MSH {\mathscr P} \MSH$ will be 
simple enough \linebreak (see (4.40)$\MSZ$ {\it infra}), the 
argument used in proving it $\MSHZ$will$\MSHZ$ entail a bit of 
bookkeeping. $\MSC\MSH$It is helpful to lay the 
groundwork for $\MSA$this$\MSA$ side of things first.

To that end, we begin by noticing, in connection with (4.25), 
that the functions $D_j f, \MSA E_j f, \MSA \Delta_j f, 
\MSA P_j f$, and $E_j \Delta_j f$ are all Hermitian 
anytime $\MSH f \MSH$ is; cf. \S4.4.  At the same time, 
we recall \vspace*{-.62ex}that
\begin{equation}\label{4.36}
\int^{\gamma_2}_{\gamma_1}\MSA\msfrac{\sin(tv)}{v}\MSA dv = O(1)
\MSE\MSE \mbox{for any}\MSE\MSE \gamma_2 > 
\gamma_1 \MSE,\MSE  t \in {\mathbb R}\vspace*{-.93ex}
\end{equation}
and\vspace*{-.15ex}
\begin{equation}\label{4.37}
\delta(v) = a\delta(av) \quad \mbox{whenever}\quad a > 0.\vspace*{.20ex}
\end{equation}
For ${\mathscr C} \subseteqq \{ 1,2,\ldots,k \}$ 
and $\vec{v} \in {\mathbb R}^k$, 
we now write
\begin{align*}
&\hspace{0em}{\mathscr C}_b(\vec{v}) = \{j \in {\mathscr C}: 
|v_j| \geqq 1 \}\MSA ,\quad  
 {\mathscr C}_s(\vec{v}) = \{j \in {\mathscr C}: |v_j| < 1\}\MSA,\quad
(\,\sim \mbox{big/small}\,)\\[.93ex]
&E_{\mathscr C}(\vec{v}) = \{\vec{\xi}  
: \xi_j = v_j 
\mbox{\,\,if\,\,} j \notin {\mathscr C}, \MSC \xi_j = \pm\MSH v_j \mbox{\,\,if\,\,} 
j \in {\mathscr C}_b(\vec{v}), \MSC |\xi_j| \leqq |v_j| \mbox{\,\,if\,\,} 
j \in {\mathscr C}_s(\vec{v}) \},\\[.93ex]
&{\mathscr E}_{\mathscr C}(\vec{v}) = \{\vec{\xi} 
: \xi_j = v_j
\mbox{\,\,if\,\,} j \notin {\mathscr C}, \MSC \xi_j = \pm\MSH v_j \mbox{\,\,if\,\,} 
j \in {\mathscr C}_b(\vec{v}), \MSC |\xi_j| \leqq 1 \mbox{\,\,if\,\,}  
j \in {\mathscr C}_s(\vec{v}) \}. 
\end{align*}
We also select any numbers $\MSH F_1,\ldots,F_k\MSH$ in $(1,\infty)$ and 
then restrict
$\vec{v}$ to lie in $\prod^{k}_{j=1}[-F_j,F_j]$. By applying the 
partition $[-F_j,F_j] = (-1,1) \cup \{ 1 \leqq |v_j| \leqq F_j \}$,
the box $\prod^{k}_{j=1}[-F_j,F_j]$ clearly splits 
into $2^k$ symmetrically-shaped 
(generally disconnected) subregions.  The set of \emph{interior} points of 
each such subregion will be succinctly referred 
to as a ``slab.''  Taking the union of 
all $2^k$ of these slabs produces $\prod^{k}_{j=1}[-F_j,F_j]$ 
apart from\vspace*{.06ex} 
a set of ($k$-dimensional) measure $0$.  The resulting ``grid''  
furnishes a natural underpinning for the obvious \emph{e}lectrical 
short-circuit interpretation of 
the sets $E_{{\mathscr C}}(\vec{v})$ and
${\mathscr E}_{{\mathscr C}}(\vec{v})$. $\MSAH$Finally,  
let $S_j = \partial / \partial \xi_j$ as in $\MSAH$\S4.10$\MSAH$ and 
$\MSA\vec{v} = 2\pi\vec{w}\MSAH$. $\MSB$We thus have\vspace*{.558ex}
\[ \vec{v} \in {\TS\prod^{k}_{j=1}}\MSH [-F_j,F_j]\hspace{1.5em}\mbox{and}
\hspace{1.52em}\vec{w} \in {\TS\prod^{k}_{j=1}}
\MSH [-\omega_j,\omega_j]\hspace{.20em},  \vspace*{-.176ex}
\]
wherein $\omega_j =  \vsfrac{1}{2\pi}F_j\MSA$.

Associated with each $f \in C^{1}(\prod^{k}_{j=1}[-F_j,F_j])$ 
and ${\mathscr C} \subseteqq \{1,2,\ldots,k\}$ are two 
\emph{piecewise continuous} functions, 
namely, $|f|_{\mathscr C}$ and $\|f\|_{\mathscr C}\MSA$, that 
one introduces by writing
\begin{equation}\label{4.38}
|f|_{\mathscr C}(\vec{v}) = \left \{\begin{array}{ll} 
\max \big[\MSA |f(\vec{\xi})|: \vec{\xi} \in E_{\mathscr C}(\vec{v})
\MSA\big ],&\mbox{\,\,if\,\,} {\mathscr C}_s(\vec{v}) = \phi \\
\\
\sup \big [\MSA|(S_j f)(\vec{\xi})|: \vec{\xi} \in E_{\mathscr C}(\vec{v}), 
j \in {\mathscr C}_s(\vec{v})\MSA\big ], &\mbox{\,\,if\,\,} 
{\mathscr C}_s(\vec{v}) > \phi
\end{array} \right \}
\end{equation}
and
\begin{equation}\label{4.39}
\|f\|_{\mathscr C}(\vec{v}) = \left \{\begin{array}{ll}
\max \big[ \MSA|f(\vec{\xi})|: \vec{\xi} \in {\mathscr E}_{\mathscr C}(\vec{v})
\MSA\big ],&\mbox{\,\,if\,\,} {\mathscr C}_s(\vec{v}) = \phi \\
\\
\sup \big[\MSA|(S_j f)(\vec{\xi})|: \vec{\xi} \in {\mathscr E}_{\mathscr C}(\vec{v}), 
j \in {\mathscr C}_s(\vec{v})\MSA\big], &\mbox{\,\,if\,\,} 
{\mathscr C}_s(\vec{v}) > \phi
\end{array} \right \}
\end{equation}
The functions $|f|_{\mathscr C}$ and $\|f\|_{\mathscr C}$ are 
clearly continuous on each slab. 

Since $\|f\|_{\mathscr C}$ typically manifests at least a 
partial dependence on $\vec{v}$, there is a slight abuse of 
notation when we write (4.39). Needless to \vspace*{-.31ex}say,
\[
|f|_{\mathscr C} \leqq \|f\|_{\mathscr C} \quad\mbox{and}
\quad |f|_\phi = \|f\|_\phi = |f(\vec{v})|\MSC.\vspace*{-.31ex}
\]
In working with $|f|_{\mathscr C}$ and $\|f\|_{\mathscr C}$ 
slab-by-slab, those variables $v_j$  
for which $j \in {\mathscr C}$ should naturally 
be viewed as primary (or$\MSA$ ``active'')$\MSH$. 
$\MSB\MSH$Those having $j \notin {\mathscr C}$ are 
best \vspace*{.46ex} treated    
as auxiliaries$\MSA$.

\begin{thm48bisC}
Consider the situation of Theorem 4.8. $\MSB$Assume that (4.9) holds 
with $\alpha = 1$ and that the quantities $\MSA\Omega_j\MSA$ 
satisfy $\MSB\min(\Omega_1,\ldots,\Omega_k) > 1$.  Let $v^{\blacktriangle}
= v/ \min\{1,|v|\}$ \,\emph{(and} $0^\blacktriangle = 1$\emph{)} as 
in Theorem 4.8$\MSH$A. $\MSE$We then have
\begin{align}\label{4.40}
&\hspace*{-1.3em} |F(t_1,\ldots,t_k) - G(t_1,\ldots,t_k)| \\[2ex]
& \phantom{mm} \leqq \widehat{c}_1\MSB 
\sum_{\mathscr P} \int^{\Omega_1}_{-\Omega_1}
\cdots \int^{\Omega_k}_{-\Omega_k}\MSA\frac{\|\MSB\varphi-\psi\MSB 
\|_{\mathscr C}}{ \prod_{j \in {\mathscr C}} \MSH |v^{\blacktriangle}_j| }\MSA
 \prod_{j \in {\mathscr D}} \msfrac{1}{\Omega_j}\MSA\prod_{j \in {\mathscr B}}
\MSA\delta (v_j)\MSA d\vec{v}\nonumber\\
& \phantom{mniii} + c_2 \sum^{k}_{\ell=1}\MSA\msfrac{m_\ell}
{\Omega_\ell}\vspace*{-.31ex}\nonumber
\end{align}
at every point $(t_1,\ldots,t_k) \in {\mathbb R}^k$, 
wherein $\widehat{c}_1$ and
$c_2$ are certain \emph{[}explicitly computable\emph{]} $\MSA$positive constants 
that depend solely on $k$. \,The constant $c_2$ is identical 
to the one that occurs in (4.12).  
\end{thm48bisC}
\pagebreak

\begin{proof} The basic idea is very simple: $\MSC\MSH$one 
merely repeats the proof of Theorem 4.8 
up to (4.20) with $\omega_j$ in$\MSZ$ place of $\Omega_j$,  
and then$\MSA$, after making a trivial ``switch over'' 
to $\vec{v} = 2\pi\vec{w}$ 
and $\MSA (\omega_j,F_j)\MSH$, applies$\MSH$ (4.25), (4.27), 
and (4.36) in a natural way to bound the value of the 
associated $d\vec{v}$\emph{$\MSA$--$\MSB$integral$\MSA$} 
on a slab-by-slab basis.  

The essential steps in the argument go as follows. $\MSC\MSA$Let 
$T_{{\mathscr B}{\mathscr D}{\mathscr C}_1{\mathscr C}_2}d\vec{v}$
signify the updated integrand in (4.20), including \vspace*{.93ex}
any trivial ${\mathbb Q}$-coefficients.  Clearly
\begin{align}\label{4.41}
 T_{ {\mathscr B} {\mathscr D} {\mathscr C}_1 {\mathscr C}_2 } 
 = \widehat{A} \MSB\prod_{j \in {\mathscr B}} &\MSA\delta(v_j)
 \MSC\prod_{j \in {\mathscr D}} \MSA \msfrac{1}{F_j} 
 R_{\ast}\Big ( \msfrac{v_j}{F_j} \Big ) 
 \prod_{j \in {\mathscr B} \cup {\mathscr D}} e^{-i v_j t_j}
 \prod_{j \in {\mathscr C}_1}\MSH e^{-i v_j t_j}\\
 & \cdot \MSA \msfrac{ D_{{\mathscr C}_1}(\varphi-\psi)(\vec{v}) }
 { { \TS \prod_{j \in {\mathscr C}_1} } (i v_j) }\MSA
 \prod _{j \in {\mathscr C}_2} 
 \Big (\msfrac{\sin(v_j t_j)}{v_j} \Big )\MSB,\nonumber 
\end{align}
where $\MSH\widehat{A}\MSH$ is some real constant depending 
solely on 
$k,{\mathscr B}, {\mathscr D}, 
{\mathscr C}_1 , {\mathscr C}_2$  
and whether\linebreak the 
RHS of (4.41) is attached 
to either the majorant or minorant track.  
Note that $T_{\ast}$ is Hermitian 
w.r.t. $\MSZ\MSZ\vec{v}\MSB$; cf. \S4.4.  After 
fixing any number $\eta$ in $(0,1/k]$, say, 
$1/k$, $\MSB$we set $|v|_{\star} = 
\max\{|v|, |v|^{1-\eta}\}$ and form 
the quantity
\begin{equation}\label{4.42}
Q \MSA\equiv\MSA \widehat{C}\MSC
\sum_{{\mathscr P}'}\MSH \int^{F_1}_{-F_1}
\cdots \int^{F_k}_{-F_k}\MSA
\frac{ |\varphi - \psi|_{{\mathscr C}'} }
{\TS \prod_{j \in {\mathscr C}'}\MSA
|v_j|_{\star}}\MSA\prod_{j \in {\mathscr D}'}\msfrac{1}{F_j}
\MSA\prod_{j \in {\mathscr B}'}\MSA\delta(v_j)\MSA d\vec{v}\MSA,
\end{equation}
wherein ${\mathscr P}' = \langle {\mathscr B}', {\mathscr C}',
{\mathscr D}' \rangle\MSAH$ \`{a}$\MSC\MSC$la \S4.7, 
and $\widehat{C}$ is some sufficiently large 
constant depending solely on $k$.

Let ${\mathscr S}$ be any slab.  The plan is to 
show that the real number $\int_{{\mathscr S}}\MSH 
T_{{\mathscr B}{\mathscr D}{\mathscr C}_1
{\mathscr C}_2}\MSA d\vec{v}$ 
splits into at most $2^k$ summands, each of which has absolute value 
majorized by \emph{some} summand of $\MSA Q\MSA$ (restricted, 
of course, to ${\mathscr S}$).  
No specific relation is implied here between
$\{{\mathscr B}, {\mathscr D}, {\mathscr C}_1, 
{\mathscr C}_2 \}$ and $\{ {\mathscr B}', {\mathscr D}', 
{\mathscr C}' \}$.  \,Since
\[
|\varphi - \psi|_{{\mathscr C}'} \leqq \|\varphi-\psi\|
_{{\mathscr C}'}\quad\MSE\mbox{and}\MSE\quad \int^1_{-1}\MSA
\msfrac{1}{|v|_{\star}}\MSA dv < +\infty\MSB,
\]
once this inclusion property is shown, estimate (4.40) 
follows immediately.

With ${\mathscr S}$ and $\{ {\mathscr B}, {\mathscr D}, 
{\mathscr C}_1, {\mathscr C}_2 \}$ now viewed as fixed, there
is obviously no loss of generality if we assume that things have
been relabelled so that:\\[.31ex]
\hspace{1em}(i) the portion of ${\mathscr C}_2$ ``having'' $|v_j|
< 1$ $\MSA$w.r.t.$\MSA$ ${\mathscr S}\MSA$ 
is $\{ 1,2,\ldots,N \}\MSA$;\\[.10ex]
\hspace{1em}(ii) ${\mathscr C}_1 = \{ N+1,\ldots,m \}\MSA$;\\[.10ex]
\hspace{1em}(iii) ${\mathscr C}_2 - \{1,\ldots,N\} = 
\{m+1,\ldots,M \}\MSAH$.\\[.31ex]
The integers $N, m, M$ are adapted to any empty sets in the
obvious way.  

Letting $\Fh = \varphi - \psi$, we next write $\Fh = I\MSH\Fh$, 
where\vspace*{-.46ex}
\[
I = \prod^N_{j=1}\MSH (P_j + D_j + E_j\Delta_j)\vspace*{-.46ex}
\]
\`{a} la (4.25).  This decomposes the function $\Fh$ into 
$3^N$ summands of the form $W\Fh$, wherein each $W$ is a 
``word'' in (i.e., element of) $\MSB\prod^N_{j=1}
\{ P_j, D_j, E_j\Delta_j \}$. The number $\int_{\mathscr S}
\MSH T_{ {\mathscr B}{\mathscr D}{\mathscr C}_1 
{\mathscr C}_2 }\MSA d\vec{v}$  
$\MSAH$($= T \MSH {\mathscr S}$, for short)$\MSAH$ decomposes similarly.  
We stress here that$\MSA$  
${\mathscr C}_1 \cap  {\mathscr C}_2 = \phi$. 

Suppose for a moment that $W$ contains some $D_\iota$.  The
function $D_{{\mathscr C}_1}(W\Fh)$ is \emph{odd} w.r.t. $v_\iota$. 
\,On the other hand, $\MSB\sin(v_\iota t_\iota) / v_\iota \MSB$ 
is even.  Since ${\mathscr S}$ is symmetric, the 
corresponding ``chunk'' of $T\MSH{\mathscr S}$, viewed as
an iterated integral (cf. (4.41)), is therefore $0$.  \,In other
words: we need only look at $W \in \prod^N_{j=1}\MSH\{ P_j, 
E_j\Delta_j \}$.

At this point, one is finally free to take, w.l.o.g.,\\[.46ex]
\small
\[
 W = \big (\TS \prod^n_{j=1} \MSH E_j\Delta_j\big )\MSH
 \TS \prod^N_{j=n+1} P_j  \quad
 \mbox{and}\quad 
 D_{{\mathscr C}_1} W\Fh = \TS \prod^N_{j=n+1} P_j \MSC
 \big ( D_{{\mathscr C}_1}{\TS \prod^n_{j=1}} \MSH E_j\Delta_j \big )  
 \MSH \Fh \MSB .
\]\\[-.93ex]
\normalsize
Referring back to (4.41), one readily sees that the 
corresponding summand of $T\MSH{\mathscr S}$ 
is $\MSA$real$\MSA$ and 
factors into
\[
 I_N[n+1]\prod^N_{j=n+1} \Big( \int^{1}_{-1} 
 \msfrac{ \sin (t_j v_j) }{v_j} \MSH d\MSA v_j \Big )\MSA ,
\]
wherein (4.36) applies and $I_N[n+1]$ is a certain iterated 
integral in the
variables $\{ v_j : j \in [1,k]\MSH , \MSAH j 
\notin  [n+1,N] \}$ having a ``level 
parameter'' $( v_{n+1},\ldots,v_N )$ 
that has been set equal to $(0,\ldots,0)$.  
The integrand in this more general expression is 
simply (4.41) with its $\{ n+1 \leqq j \leqq N \}$ (sine 
quotient) terms replaced by $1$ and its numerator entry
$D_{{\mathscr C}_1}(\varphi-\psi)$ replaced by
\begin{equation}\label{4.43}
D_{{\mathscr C}_1}\MSH(E_1\Delta_1)\cdots(E_n\Delta_n)\MSH \Fh\MSB.
\end{equation}
The interpretation of $(v_{n+1},\ldots,v_N)$ as a level parameter
makes eminently good sense $\MSZ$near$\MSZ$ $(0,\ldots,0)$ and 
fits with \S4.6. $\MSE$The ${\mathscr C}_2$ 
entries ``still standing'' in the integrand 
have \vspace*{-.08ex}value
\[
\prod^{n}_{j=1} \Big ( \msfrac{\sin (t_j v_j)}{v_j}\Big  )
\prod^{M}_{j=m+1} \Big (\msfrac{\sin (t_j v_j)}{v_j} \Big )\MSB .
\vspace*{-.08ex}
\]
Note that, in the second half of this product, 
one has $|v_j| > 1$;  cf. (iii).

With $W$ fixed, we now form the $\MSH$absolute value$\MSH$ of our 
$T{\mathscr S}\MSB$-$\MSB$summand and apply relations (4.41), (4.36), 
(4.43), and (4.27) with $\ell = n$ and $h = 1$ (or possibly $0$).
See also (4.24).  After reviewing definition (4.38), dividing  
${\mathscr C}_1$ into two chunks (\`{a} la $s / b$), and doing 
a bit of 
slow but elementary bookkeeping, one finds  
an obvious inclusion w.r.t. $\MSHZ\MSZ Q\MSAH$ over $\MSH {\mathscr S}\MSH$, 
wherein\vspace*{.46ex}
\[{\mathscr B}' = {\mathscr B} \cup \{n+1,\ldots,N\},
\MSE\MSB {\mathscr D}'={\mathscr D},
\MSE\MSB {\mathscr C}' = {\mathscr C}_1 \cup \{1,\ldots,n\} \cup
\{m+1,\ldots,M\}\MSH .\vspace*{.46ex}
\]
The key point is that $L \leqq k$ in (4.27)$\MSA$; $\MSB$note 
too that use of   
$\MSAH |\MSH\Fh|_{{\mathscr C}'}\MSAH$ in $\MSH$(4.42)
actually corresponds to making a convenient overshoot in 
the ${\mathscr M}_{\sigma}\MSAH$-$\MSAH$terms.  

This completes the proof of (4.40) with $F_j$ in 
place of $\Omega_j$. $\MSE$\bx \\[-5.1ex]

\end{proof}

By restricting one's attention to those $T_{\ast}$ having
${\mathscr C}_2 = \phi$, it is immediately seen that the 
${\mathscr B}'{\mathscr D}'{\mathscr C}'$$\MSAH$-$\MSAH$process 
``visits'' every portion of $\MSA Q\MSA$ as $\MSA{\mathscr S}\MSA$ 
varies. $\MSD$It also goes without saying here that 
the ``1'' appearing in 
${\mathscr C}_b(\vec{v})\MSA,\MSA {\mathscr C}_s(\vec{v})
\MSA,\MSA {\mathscr E}_{{\mathscr C}}(\vec{v})\MSA,\MSA$ 
etc$\MSA$, \linebreak can be replaced by any other positive 
constant $\tau$, $\MSA$as need be.\vspace*{.1ex}

\subsection{}
Just as in the case of ${\mathbb R}^1$, the 
$\MSH$single$\MSH$ most common application ``of 
all {\linebreak} this'' is 
a result that ensues nearly 
immediately from Corollary 4.12; $\MSH$viz., the 
\pagebreak 
central limit theorem ([Cr, p.112])   
for a sum of $N$ identically 
distributed, independent, vector-valued random 
variables $\MSA\vec{X}_j\MSA$.

One simply looks at
$\varphi_n(\vec{v}) = \varphi( \vec{v} / {\sqrt{n}} )^n $, 
$\MSH$where $\varphi$ is the characteristic function of the common
distribution function $F(\vec{x})$. $\MSB$We assume herein that  
\vspace*{.24ex} 
\begin{equation}\label{4.44}
\int_{{\mathbb R}^k}\,\vec{x}\,dF(\vec{x}) = \vec{0}
\MSD\MSE\quad\mbox{and}\quad\MSE\MSD
\Rank \Big ( \int_{{\mathbb R}^k}\,x_j x_\ell\MSC dF \Big) = k \MSC .
\vspace*{.24ex}
\end{equation}
($\MSH$Cf. $\MSZ\MSZ$(4.10) $\MSA$and$\MSA$ 
[Cr, p.$\MSH$109$\MSE$(145)$\MSH$]$\MSA$.) $\MSD$The key 
technical ingredient in verifying \linebreak hypothesis (ii) in 
Corollary 4.12 is the rudimentary 
fact \vspace*{.75ex}that
\[
\hspace{-1.4em}e^{it} = 1 + it + \tfrac{1}{2} (it)^2 + 
O(|t|^2)\min \{1, |t| \} \hspace{1.3em} 
\mbox{for $\hspace{0.9em} t \in {\mathbb R}\MSE$.}\vspace*{.75ex}
\]
In condition (i), one naturally takes $\alpha$ equal to $2$.

\vspace*{-.31ex}
\subsection{}
$\MSZ$From the standpoint of modern probability theory,  
estimates like (4.12), (4.29), $\MSH$(4.35), and   
$\MSH$(4.40) $\MSH$[with genesis in Lemma 4.3]$\MSH$ have 
a structural style that is 
slightly dated.    
$\MSH$In \S 5.13, we'll 
cite a $\MSHZ$few $\MSHZ$references $\MSHZ$that 
$\MSHZ$help $\MSZ$to $\MSZ$connect things to the more recent 
literature$\MSE$-$\MSD$and  
to smoothing ideas of 
other sorts. \vspace*{.39ex}

\section{More on The Central Limit Theorem}

\renewcommand{\theequation}{5.\arabic{equation}}
\setcounter{equation}{0}

\subsection{}
With its rich history and multitude of formulations, the central
limit theorem ([Cr, Fel1, Fel2, Usp, Po2]) is aptly regarded 
as one of the main results in classical probability theory.

In the first part of this section, we'll use Corollary 4.12 to 
quickly establish two [low-level] variants  
of the C.L.T. having particular 
relevance for our later work with 
logarithms of $L$$\MSA$-$\MSA$functions. 
$\MSB$To keep matters simple, we'll formulate things in a Liapounov 
style and restrict ourselves to contexts where, after 
rescaling, the underlying sequence of \vspace*{-.20ex} independent random 
variables $\,\{X_j\}^\infty_{j=1}\,$ has a common distribution 
function $\MSA F(z)\MSA$ 
on $\MSB{\mathbb C}\MSB$ ($\MSB\cong\MSB {\mathbb R}^2\MSA$).

\subsection{}
It is helpful to begin by recalling several  
very basic inequalities. 
\,For this purpose, let $n\geqq 0$,  
$N \geqq 1,\,\, 
m \geqq 2,\,\, \lambda \geqq 1$, 
and \,$x_j \in [0, \infty)$. \,Put
\[
s_N = (x^2_1 + \dots + x^2_N)^{1 / 2}\, , \quad B_N =
 \max \{x_1,\dots, x_N\}\, ,
\]
and let $\Theta$ signify a complex number (not always 
the same) having modulus no bigger than 1. \,We then have:

\small
\begin{equation}\label{5.1} 
e^{it} = \sum^n_{k=0} \frac{(it)^k}{k!} + \Theta\, 
\frac{|t|^{n+1}}{(n+1)!}
\quad \mbox{for \, $t \in {\mathbb R}$}
\end{equation}
\begin{equation}\label{5.2}
\Log (1+z) = z+\Theta |z|^2 \quad \mbox{for\, $|z| 
\leqq \frac{1}{2}$}
\end{equation}
\begin{equation}\label{5.3}
B_m \leqq \big (\sum^m_1 x^\lambda_j \big )^{\frac{1}{\lambda}}
 \leqq \sum^m_{j=1} x_j \leqq m^{1-\frac{1}{\lambda}}
 \big (\sum^m_1 {x_j}^\lambda\big )^{\frac{1}{\lambda}} \leqq mB_m
\vspace*{-.31ex}
\end{equation}
\begin{equation}\label{5.4}
\Big (\frac{B_N}{s_N}\Big )^3 \leqq \frac{1}{s^3_N}
 \sum^N_{j=1} x_j^3 \leqq \frac{B_N}{s_N} \quad
 \mbox{anytime}\quad s_N \neq 0\MSA .\vspace*{.31ex}
\end{equation}
\normalsize
Relation (5.1) follows from the identity
\small
\[
e^{it} = 1 + \int^t_0 ie^{iu} du
\]
\normalsize
by a trivial induction. At the same \vspace*{.25ex}time, one also sees that
\small
\begin{equation}\label{5.5}
e^{it} = \sum^n_{k=0} \frac{(it)^k}{k!} + 
\Theta \frac{2^{1-\omega} |t|^{n+\omega}}{(1)(1+\omega)
...(n+\omega)}\vspace*{.25ex}
\end{equation}
\normalsize
for any $\omega \in [0,1)$. The remainder term in (5.1) 
thus admits the \vspace*{.25ex}alternate format
\small
\[
\hspace*{11.37em}\Theta \min \Big \{\frac{|t|^{n+1}}{(n+1)!}\, ,
\, \frac{2|t|^n}{n!} \Big \}\, . 
\vspace*{.40ex}\hspace{11.37em}\mbox{(5.1bis)}
\]
\normalsize
Notice too that, in relation (5.3), the first and last terms 
simply correspond to taking $\lambda = \infty$ for 
each fixed $m$. 

\subsection{}
Let $\{X_j\}^\infty_{j=1}$ now be any sequence of 
\emph{complex-valued}$\MSH$, mutually independent random variables 
having a $\MSH$common$\MSH$ distribution function $\,F(z)\,$ 
on $\,{\mathbb C}\,$. \linebreak  Assume that $F$ satisfies
\begin{equation}\label{5.6}
\int_{\mathbb C}\,|z|^3\, dF(z)= \rho^3 < \infty
\end{equation}
in addition to \vspace*{.31ex}
\begin{equation}\label{5.7}
\int_{\mathbb C}\,z\,dF = 0\MSC,\quad \int_{\mathbb C}\,|z|^2\,dF = 
2 \beta^2 \MSC,\quad \int_{\mathbb C}\,z^2\,dF = 0 \hspace{1.2em} 
\mbox{with $\MSE \beta > 0 \MSE\MSE$}.
\end{equation}
(Cf. Remark 5.10 concerning the $z^2$$\MSB$-$\MSB$integral.)  
Put $z = x\,+\,iy$ as usual and then 
define the characteristic function of $F$ by
writing 
\begin{equation}\label{5.8}
\varphi(\xi) = \int_{\mathbb C}\,\exp [i\Ree(\overline{\xi}z)]\,
dF(z)\hspace{1.5em}\mbox{for $\MSB \xi \in {\mathbb C}\MSE\MSE$}.
\end{equation}
Taking $\xi = v_1\,+\,i v_2$ gives 
the standard ${\mathbb R}^2\MSB$--$\MSB$version. Similarly for any 
other distribution function $\MSB F_{arb}\MSB$ on $\,{\mathbb C}\,$.  
The RHS of (5.8) is, of course, nothing other than 
$\MSC{\mathbb E}[\exp(i\Ree(\overline{\xi}\MSA Y)]\MSC$ with $Y=X_j$\,.
\begin{theorem}
With $F$ and $X_j$ as above, let 
$\{b_j\}^\infty_{j=1}$ be any sequence of complex numbers such 
that $b_j \not\equiv 0$ \vspace*{-.62ex}and 
 \begin{equation}\label{5.9}
\lim_{N\rightarrow \infty}\MSD \sum^N_{j=1} \msfrac{|b_j|^3}
{s^3_N}= 0 \MSA ,
\end{equation}
wherein $s_N  = (|b_1|^2+ \cdots + |b_N|^2)^{1/2}$. 
$\MSD$For $t \in {\mathbb R}$, let ${\mathscr A}_t = (-\infty,t]$. 
In the limit of large $N$, the random \vspace*{-.50ex}variable
\[
T_N \equiv \msfrac{1}{s_N}\, \sum^N_{j=1}\,b_{j}X_{j}
\]
will then become distributed like a complex \vspace*{.75ex}Gaussian; 
\,i.e., 
\begin{equation}\label{5.10}
\Pr \{ T_N \in {\mathscr A}_x \times {\mathscr A}_y \} \,\rightarrow\, 
\big (\msfrac{1}{\beta \sqrt{2\pi}} \big)^2\,\int
^x_{-\infty}\,\int^y_{-\infty}\,\exp 
\big (-\msfrac{u^2+v^2}{2\beta^2} \big )\,dv\,du \vspace*{.75ex}
\end{equation}
for every $(x,y) \in {\mathbb R}^2$. \,The convergence  
will be uniform w.r.t. $x$ and $y$.
\end{theorem}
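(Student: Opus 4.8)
The plan is to realize $T_N$ as an $\mathbb{R}^2$-valued random variable and invoke Corollary 4.12 with $k=2$, $\alpha=2$, and $G$ the complex Gaussian whose density is the right-hand integrand of (5.10). Write $g(x,y)=\bigl(\tfrac{1}{\beta\sqrt{2\pi}}\bigr)^2\exp\bigl(-\tfrac{x^2+y^2}{2\beta^2}\bigr)$; this is continuous on $\mathbb{R}^2$ with finite moments of every order, so (4.9) and (4.10) hold for any $\alpha$, and its characteristic function (in the convention of (5.8)) is $\psi(v_1,v_2)=\exp\bigl(-\tfrac{\beta^2(v_1^2+v_2^2)}{2}\bigr)$ by the Gaussian transform rule recorded in \S1.2, suitably rescaled. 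Hypothesis (i) of Corollary 4.12 is then immediate with $\alpha=2$: since $\|\vec x\|_\infty\le|z|$ and the $X_j$ are independent with mean zero and $\mathbb{E}|X_j|^2=2\beta^2$ by (5.7),
\[
\int_{\mathbb{R}^2}(\|\vec x\|_\infty)^2\,dF_N(\vec x)\;\le\;\mathbb{E}|T_N|^2\;=\;\frac{1}{s_N^2}\sum_{j=1}^N|b_j|^2\,\mathbb{E}|X_j|^2\;=\;2\beta^2\;=\;O(1).
\]
Everything thus reduces to verifying hypothesis (ii), namely the pointwise convergence $\varphi_N(\xi)\to\psi(\xi)$.

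For that, expressing the contribution of $b_jX_j/s_N$ through the common $\varphi$ of (5.8) gives, by independence, the factorization $\varphi_N(\xi)=\prod_{j=1}^N\varphi(\eta_j)$ with $\eta_j\equiv\overline{b_j}\,\xi/s_N$; note $\sum_j|\eta_j|^2=|\xi|^2$ and $\max_j|\eta_j|\to0$, the latter because $\max_j(|b_j|/s_N)^3\le\sum_j(|b_j|/s_N)^3\to0$ by (5.9). Inserting the Taylor expansion (5.1) with $n=2$ into the integral defining $\varphi$, using (5.7) both to annihilate the first-order term and to evaluate the quadratic term — one checks that $\mathbb{E}z^2=0$ forces $\mathbb{E}x^2=\mathbb{E}y^2=\beta^2$ and $\mathbb{E}xy=0$, hence $\mathbb{E}[\Ree(\overline\eta z)^2]=\beta^2|\eta|^2$ — and bounding the remainder by (5.1bis) together with (5.6), one arrives at
\[
\varphi(\eta_j)=1-\tfrac12\beta^2|\eta_j|^2+r_j,\qquad \sum_{j=1}^N|r_j|\le\frac{\rho^3}{6}\sum_{j=1}^N|\eta_j|^3=\frac{\rho^3|\xi|^3}{6s_N^3}\sum_{j=1}^N|b_j|^3\longrightarrow0
\]
by (5.9). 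Taking $\Log$ of each factor (legitimate once $N$ is large, since every $\varphi(\eta_j)$ is then uniformly close to $1$) and applying (5.2), the linear parts sum to $-\tfrac12\beta^2|\xi|^2+\sum_j r_j$, while the quadratic error parts are controlled by $\bigl(\max_j(\tfrac12\beta^2|\eta_j|^2+|r_j|)\bigr)\cdot\sum_j(\tfrac12\beta^2|\eta_j|^2+|r_j|)\to0$. Hence $\Log\varphi_N(\xi)\to-\tfrac12\beta^2|\xi|^2$, i.e.\ $\varphi_N(\xi)\to\psi(\xi)$ pointwise on $\mathbb{C}\cong\mathbb{R}^2$.

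With (i) and (ii) established, Corollary 4.12 yields $F_N\to G$ uniformly on $\mathbb{R}^2$; unwinding the two-dimensional distribution function (right-continuity convention of \S3.1) and noting that $F_N(x,y)=\Pr\{T_N\in\mathscr{A}_x\times\mathscr{A}_y\}$, this is exactly (5.10) with uniform convergence in $(x,y)$, and the analogue for a continuous parameter $\xi$ follows from the corresponding clause of Corollary 4.12. The only step demanding genuine care is the remainder bookkeeping in the two expansions above — above all, making the $\Log$-manipulation valid uniformly in $j$, which is precisely where the Lindeberg-type consequence $\max_j|b_j|/s_N\to0$ of (5.9) is indispensable; everything else is routine.
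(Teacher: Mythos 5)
Your proposal is correct and follows essentially the same route as the paper: verify hypothesis (ii) of Corollary 4.12 by factoring $\varphi_N$ over the independent summands, Taylor-expanding via (5.1) with (5.7) reducing the quadratic term to $-\tfrac12\beta^2|\mathcal{U}_j|^2$, bounding the cubic remainders through (5.9), and passing to logarithms via (5.2). The only (immaterial) differences are that you check moment hypothesis (i) with $\alpha=2$ where the paper uses $\mathbb{E}|T_N|\leqq\rho$, and you establish pointwise rather than uniform-on-disks convergence of $\varphi_N$, which Corollary 4.12 equally accepts.
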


\begin{proof}
By H\"{o}lder's inequality, ${\mathbb E} (|T_N|) \leqq (2 \beta^2)^{1 / 2}
\leqq \rho$.  In view of 
Corollary 4.12, it suffices to check that one has
\[
\varphi_N(\xi) \, \rightrightarrows \,\, 
\exp(-\sfrac{1}{2} \MSA \beta^2 |\xi|^2)
\]
on every finite disk  $\{ |\xi| \leqq A \}$.  Here $\varphi_N$ is the 
characteristic function of $T_N$.  Since the
$X_j$ are independent, one immediately gets
\[
\varphi_N(\xi) = \prod^N_{j=1} \varphi ({\mathcal U}_j),
\quad\MSE {\mathcal U}_j \equiv \msfrac{\overline{b}_j}{s_N}\MSA\xi \, .
\]
Using (5.1) and conditions (5.6)+(5.7), we quickly see that
\begin{align}
\varphi_N(\xi) & =
\prod^N_{j=1} \Big ( 1- \sfrac{1}{2} \beta^2 \Ree^2 ({\mathcal U}_j)-
\sfrac{1}{2} \beta^2  \Imm^2 ({\mathcal U}_j) + \sfrac{1}{6}\MSA\Theta
\MSA\rho^3\MSA |{\mathcal U}_j|^3 \Big )\nonumber\\
& = \prod^N_{j=1} \Big ( 1-\sfrac{1}{2} \beta^2 |{\mathcal U}_j|^2 +
\sfrac{1}{6} \MSA \Theta\MSA \rho^3 \MSA |{\mathcal U}_j|^3 \Big )\,.
\end{align}

Letting $B_N = \max \{|b_1|, \dots , |b_N|\}$, $\MSB$one also knows by  
(5.4) that
\[
\hspace{14.11em}\msfrac{B_N}{s_N} \rightarrow 0\hspace{14.11em}(5.9')
\]
is {\it equivalent} to (5.9).  With this point noted,  
we now keep $N$ large enough to 
have, say, 
\[
4 \beta^2 \MSB \Big ( \frac{A B_N}{s_N}\Big )^2 + 
2 \MSB \sum^N_{j=1} 
\,\Big ( \rho \frac{A |b_j|}{s_N} \Big )^3 \, < 1\, .
\]

Bearing in mind that $|{\mathcal U}_j| \leqq A B_N/s_N$, 
\,for \emph{some} branch of the logarithm we then get:
{\allowdisplaybreaks
\begin{align*}
\log \varphi_N (\xi)\, & =\,
\sum^N_{j=1} \, \Big (\!- \sfrac{1}{2}\,\beta^2 |{\mathcal U}_j|^2 + 
\sfrac{1}{6}\MSA\Theta\MSA\rho^3\MSA|{\mathcal U}_j|^3 \Big )\\
& \quad + \sum^N_{j=1} \, \Theta \Big (\sfrac{1}{2}\,\beta^2
|{\mathcal U}_j|^2 + \sfrac{1}{6}\MSA\rho^3\MSA |{\mathcal U}_j|^3 
\Big )^2\\
& = - \sfrac{1}{2}\beta^2 \, \sum^N_{j=1} \, |{\mathcal U}_j|^2 +
\sfrac{1}{6}\MSA\Theta\MSA \sum^N_{j=1}\,\rho^3
|{\mathcal U}_j|^3\\
& \quad + \sfrac{1}{2}\MSA\Theta \, \sum^N_{j=1} \, 
\big ( \beta |{\mathcal U}_j| \big )^4 +
\sfrac{1}{18}\MSA\Theta\MSA \sum^N_{j=1} \, 
\big ( \rho^3\MSA|{\mathcal U}_j|^3 \big )^2 \\
& = - \sfrac{1}{2}\beta^2 \,\sum^N_{j=1} \,
|{\mathcal U}_j|^2 + \sfrac{1}{6}\MSA\Theta\MSA
\sum^N_{j=1} \,\rho^3 |{\mathcal U}_j|^3\\
& \quad + \sfrac{1}{4}\MSA\Theta\, 
\sum^N_{j=1}\, \Big (  \beta^3 \MSA |{\mathcal U}_j|^3
 +  \rho^3 |{\mathcal U}_j|^3  \Big )\qquad  \{\beta \leqq \rho \} \\
& = - \sfrac{1}{2}\beta^2 \,\sum^N_{j=1} \,|{\mathcal U}_j|^2 +
\sfrac{2}{3}\Theta\,\sum^N_{j=1}\,\rho^3 |{\mathcal U}_j|^3 \\
& = - \sfrac{1}{2}\beta^2 \MSA |\xi|^2 +  \MSA\sfrac{2}{3}\Theta 
\rho^3 A^3\,\sum^N_{j=1}\,\msfrac{|b_j|^3}{s_N^3} \,\, . 
\end{align*}  }
Upon exponentiating and letting $N \rightarrow \infty$, the desired
convergence of $\varphi_N (\xi)$ follows at once. $\MSC$\bx
\end{proof}

Prior to continuing, it is convenient to pause for several comments
concerning the ${\mathbb R}$$\MSB$--$\MSB$analog of Theorem 5.4.  
To this end, let $E(x)$ be any probability distribution 
on ${\mathbb R}$ satisfying an analog of (5.6)+(5.7) with 
respective values $\{ \rho_{E}^3\MSAH; \,0, \beta^2, \beta^2 \}$.  
Let $\{ ({\mathcal X}_j, {\mathscr Y}_j) \}^\infty_{j=1}$ be 
any {\it i.i.d.} $\MSZ\MSZ$sequence of random variables on ${\mathbb R}^2$ 
associated with the product measure $dE \times dE$.  The 
corresponding $F(z)$ clearly satisfies (5.6)+(5.7).
The following result is now immediate either 
by imitating the proof of Theorem 5.4 $\MSB$or$\MSB$ simply 
putting 
$y = \infty$ in (5.10).\vspace*{.15ex}
\begin{corollary}\,(The ${\mathbb R}$$\MSB$--$\MSB$analog of Theorem 5.4.)
Given $E$ and independent random 
variables $\{ {\mathcal X}_j \}^\infty_{j=1}$ as above.  $\MSAH$Let 
$\{b_j\}^\infty_{j=1}$ be any sequence of real  
numbers ($\MSA\not\equiv 0$) satisfying condition (5.9).  
Write\vspace*{-.31ex}
\[
T_N \equiv \msfrac{1}{s_N}\,\sum^N_{j=1}\,b_j {\mathcal X}_j \vspace*{-.31ex}\MSC .
\]
In the limit of large $N$, we then have
\[
\Pr \{ T_N \leqq x \} \rightarrow  
\msfrac{1}{\beta \sqrt{2\pi}}\MSA \int^x_{-\infty}\MSA
\exp \big (-\msfrac{u^2}{2\beta^2} \big )\,du
\]
for every $x \in {\mathbb R}$.  The convergence will be \vspace*{.15ex} 
uniform w.r.t. $x$.
\end{corollary}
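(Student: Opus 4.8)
The plan is to deduce the corollary from Theorem~5.4 by a marginalization trick, exactly as the sentence preceding the statement suggests (``putting $y = \infty$ in (5.10)''); a direct re-run of the proof of Theorem~5.4 with real scalars is an equally short alternative, which I would record at the end.

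First I would manufacture a complex distribution to which Theorem~5.4 applies. Let $F(z)$ be the law of $X_j \equiv \mathcal{X}_j + i\mathscr{Y}_j$, i.e.\ the product measure $dE \times dE$ on $\mathbb{C} \cong {\mathbb R}^2$; the $X_j$ are then {\it i.i.d.} A one-line computation using the ${\mathbb R}$-analog of (5.7) for $E$ gives $\int_{\mathbb C} z\,dF = 0$, $\int_{\mathbb C} |z|^2\,dF = \beta^2 + \beta^2 = 2\beta^2$, and $\int_{\mathbb C} z^2\,dF = \beta^2 - \beta^2 + 0 = 0$, while $\int_{\mathbb C} |z|^3\,dF < \infty$ follows from $|z|^3 \leqq 4(|x|^3 + |y|^3)$ and the finiteness of $\rho_E$. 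Hence $F$ satisfies (5.6)+(5.7). Since the $b_j$ are real, $\frac{1}{s_N}\sum_{j=1}^N b_j X_j$ has real part equal to the $T_N$ of the corollary and imaginary part $\frac{1}{s_N}\sum_{j=1}^N b_j \mathscr{Y}_j$; moreover hypothesis (5.9) is unaffected, and $s_N$ is the same in both settings.

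Next I would invoke Theorem~5.4 for this $F$ and the sequence $\{b_j\}$, obtaining that the bivariate distribution functions $H_N(x,y) \equiv \Pr\{\frac{1}{s_N}\sum b_j X_j \in \mathscr{A}_x \times \mathscr{A}_y\}$ converge, \emph{uniformly on} ${\mathbb R}^2$, to the bivariate Gaussian $H(x,y)$ in (5.10). Now let $y \to \infty$: by continuity of measure $H_N(x,y) \uparrow \Pr\{T_N \leqq x\}$, while $H(x,y) \to \frac{1}{\beta\sqrt{2\pi}}\int_{-\infty}^x \exp(-u^2/2\beta^2)\,du$ because $\int_{-\infty}^\infty \exp(-v^2/2\beta^2)\,dv = \beta\sqrt{2\pi}$. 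Passing $y \to \infty$ inside $|H_N(x,y) - H(x,y)| \leqq \sup_{{\mathbb R}^2}|H_N - H|$ then yields the asserted estimate together with its uniformity in $x$. The only point deserving a moment's attention is this interchange of the $y \to \infty$ and $N \to \infty$ limits, and, as just indicated, it is handled gratis by the uniformity already present in Theorem~5.4 (alternatively, use the uniform tail bound $\Pr\{|\frac{1}{s_N}\sum b_j \mathscr{Y}_j| > y\} \leqq \beta/y$, valid for all $N$ by Chebyshev since $\mathbb{E}(\frac{1}{s_N^2}(\sum b_j \mathscr{Y}_j)^2) = \beta^2$).

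Alternatively, one bypasses Theorem~5.4 altogether: with $\varphi_E$ the characteristic function of $E$ and $\varphi_N$ that of $T_N$, independence gives $\varphi_N(v) = \prod_{j=1}^N \varphi_E(b_j v / s_N)$, and the Taylor-expansion-and-logarithm computation in the proof of Theorem~5.4 carries over verbatim with the real scalars $b_j v / s_N$ in place of the $\mathcal{U}_j$, yielding $\varphi_N(v) \rightrightarrows \exp(-\sfrac{1}{2}\beta^2 v^2)$ on every finite interval. Since $\mathbb{E}(T_N^2) = \beta^2$ forces $\mathbb{E}(|T_N|) \leqq \beta$, hypothesis (i) of Corollary~3.4 holds (with $\alpha = 1$), and that corollary delivers the uniform convergence at once. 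Either way there is no genuine obstacle here; the statement is a true corollary, and the bulk of the ``work'' is the trivial moment bookkeeping for the product measure $dE \times dE$.
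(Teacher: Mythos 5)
Your proposal is correct and follows exactly the two routes the paper itself indicates (marginalizing Theorem 5.4 via $y=\infty$ in (5.10), or rerunning the Theorem 5.4 argument with real scalars and invoking Corollary 3.4); the moment bookkeeping for $dE\times dE$ and the limit interchange via the uniformity in Theorem 5.4 are just the details the paper leaves implicit.
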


Corollary 5.5 dates back to Liapounov (1901); cf. [Lia].  
Either of the indicated proofs immediately adapts to 
encompass the nominally broader framework 
of $(2+\delta)$$\MSB\MSH$-$\MSB$moments 
and \emph{non}-identically 
distributed, $\beta^2$$\MSA$-$\MSA$normalized  
${\mathcal X}_j$ that one traditionally uses in 
stating this result. Cf. estimate (5.5) and [Usp, p.284], 
\,[Cr, \S VI.4], \,[Bil, p.371$\MSH$(27.16)].\footnote{The 
numerators in (5.9) are now modified to include 
appropriate $\rho^{2+\delta}_j$ factors arising 
from the measures $dE_j$.  (Cf. 
the many $\rho^3 |{\mathcal U}_j|^3$ terms 
in the proof of Theorem 5.4.)}     
$\MSB$For effective versions of Corollary 5.5,\linebreak 
see, e.g., [Ess, p.43] or [Fel2, p.544].  A quick 
look at [Fel3, theorem 1] is also illuminating.\vspace*{.50ex}

\begin{remark}
When the variables ${\mathcal X}_j$ have their support restricted to 
a finite interval $[-\Delta, \Delta]$, it is also feasible 
to obtain Corollary 5.5 by a consideration of $T_N$ \!-\! moments. 
Cf. [Bil, pp.407 (example 30.1) \!-- \!410]. Note that, by (5.4), 
Billingsley's limit hypothesis (30.5) basically coincides with (5.9). 

Markov showed in 1913 that, with a bit of ingenuity in forming 
truncated variables, the moment method could be successfully 
recast  
(i.e., ``partitioned'') to yield 
Corollary 5.5 in full generality. $\MSC$Cf. [Usp, pp.388--395] 
and [Bil, p.600 (30.1)].  
$\MSC$An analogous splitting will 
come up later in connection with logarithms of Euler products.
\end{remark}

\subsection{}
In our second C.L.T. variant, we employ a kind 
of vector ``twist'' to ``lift'' the Gaussian of 
Theorem 5.4 $\MSA$up$\MSA$ to a related one 
on ${\mathbb C}^J$.  The twist is induced by replacing 
the earlier coefficients $\MSH b_n \MSH$   
(note the change of 
index $j \MSH \hookrightarrow \MSH n$) 
with \linebreak  a string of appropriately-behaved   
column vectors $\MSH\vec{b}_n \in \MSH{\mathbb C}^J \MSH$.

To keep the new Gaussian as unencumbered \vspace*{-.07ex}as 
possible, we normalize things by insisting that  
\begin{equation}\label{5.12}
\lim_{N\rightarrow \infty}\MSD \msfrac{1}{\sigma_N^2}\, \sum^N_{n=1}\,
\big( b_{n,j} \big) \big(\MSA \overline{b}_{n,\ell} \big)^t 
= \big [\beta_j\, \delta_{j\ell} \big ]\vspace*{.93ex} 
\end{equation}
hold in the sense of matrix addition for certain 
$\beta_j>0$ and certain monotonically increasing positive  
scaling factors $\sigma_N$.   
$\MSB$[$\MSA$Observe that each $J\times J$ 
summand in \linebreak (5.12) 
is nonnegative-definite$\MSH$; $\MSB$the thought of transforming (5.12) 
under a {\it uni-{\linebreak}tary} change of basis is 
thus present here \vspace*{.045ex} virtually 
ab initio$\MSH$. $\MSB$Notice $\MSA$too$\MSA$ 
that each summand is $\MSAH$preserved$\MSAH$ under 
the ``local'' action     
$\MSB \vec{b}_n \MSA \mapsto\MSA \exp(i\omega_n)
\vec{b}_n$.$\MSB$]\vspace*{.62ex}

\begin{theorem}
Let $\{X_n\}^\infty_{n=1}$ be any sequence of complex-valued 
independent random variables having a common distribution 
function $F(z)$, where $F$ satis-{\linebreak}fies 
both (5.6) and (5.7).  
Let $\MSA\{b_{n,j},\, \sigma_N,\,\beta_j \}\MSA$ be as 
in (5.12). Assume that
\begin{equation}\label{5.13}
\lim_{N\rightarrow\infty}\MSD \sum^N_{n=1} \, 
\sum^J_{j=1} \, \frac{|b_{n,j}|^3}
{\sigma_N^3} = 0\,.
\end{equation}
In the limit of large $N$, the ${\mathbb C}^J$-\,valued random
variable
\[
T_N \equiv \,\msfrac{1}{\sigma_N} \sum^N_{n=1}\,X_n ( b_{n,j} )
\] 
then becomes distributed like a multivariate Gaussian
\\[.15ex]
\begin{equation}\label{5.14}
\int_{\mathcal E} \,{\TS \prod^J_{j=1}}\, \msfrac{1}{2\pi\beta_j\beta^{2}} 
 \exp \big(-\msfrac{|w_j|^2}{2\beta_j\beta^{2}}\big )\;
{\TS \prod^J_{j=1}}\, du_j dv_j
\end{equation}
\\[.15ex]
wherein ${\mathcal E} \subseteqq {\mathbb C}^J$ 
and $w_j \equiv u_j+ iv_j$ corresponds 
to the $j^{\rm th}$ component of $T_N$.
\end{theorem}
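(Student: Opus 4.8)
The plan is to deduce Theorem 5.8 from Corollary 4.12, applied with $k = 2J$ after identifying $\mathbb{C}^J$ with $\mathbb{R}^{2J}$ via $w_j = u_j + i v_j$. First I would observe that the candidate limit (5.14) has a continuous density which is a product of $J$ normalized complex Gaussians; it is therefore $C^\infty$, all of its $\|\vec x\|_\infty$-moments are finite, and each of its one-variable marginal integrals is again a bounded Gaussian, so hypotheses (4.9) and (4.10) of Theorem 4.8 hold for the target with room to spare. Thus it only remains to check conditions (i) and (ii) of Corollary 4.12 for the laws of $T_N$.

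Condition (i) is cheap: using independence, $\int z\, dF = 0$, and $\int|z|^2 dF = 2\beta^2$, one gets $\mathbb{E}\big[|T_N|^2\big] = 2\beta^2\,\sigma_N^{-2}\sum_{n=1}^N\|\vec{b}_n\|_2^2$, and taking the trace of the matrix identity (5.12) shows $\sigma_N^{-2}\sum_{n}\|\vec{b}_n\|_2^2 \to \sum_j\beta_j$; hence $\int_{\mathbb{R}^{2J}}(\|\vec x\|_\infty)\,dF_n \leq \mathbb{E}[|T_N|] \leq \mathbb{E}[|T_N|^2]^{1/2} = O(1)$, so (i) holds with $\alpha = 1$.

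For (ii) I would essentially rerun the proof of Theorem 5.4. Writing the $\mathbb{C}^J$-characteristic function $\varphi_N$ of $T_N$ via the pairing (5.8) and using only independence, one has $\varphi_N(\vec\xi) = \prod_{n=1}^N \varphi(\mathcal U_n)$ with $\mathcal U_n \equiv \sigma_N^{-1}\,\overline{\langle\vec\xi,\vec{b}_n\rangle}$. From (5.1) together with (5.7) --- which makes $\mathbb{E}[\Ree(\bar u Y)^2] = \beta^2|u|^2$ and kills the $z^2$-term --- one obtains $\varphi(\mathcal U_n) = 1 - \tfrac12\beta^2|\mathcal U_n|^2 + \tfrac16\Theta\rho^3|\mathcal U_n|^3$. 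Two limits then drive everything: by (5.12), $\sum_n|\mathcal U_n|^2 = \sigma_N^{-2}\sum_n\sum_{j,\ell}\overline{\xi_j}\,\xi_\ell\,\overline{b_{n,j}}\,b_{n,\ell} \longrightarrow \sum_j\beta_j|\xi_j|^2$ uniformly on $\{|\vec\xi|\leq A\}$; and by Cauchy--Schwarz, (5.3), and (5.13), $\sum_n|\mathcal U_n|^3 \leq |\vec\xi|^3\,J^2\,\sigma_N^{-3}\sum_{n,j}|b_{n,j}|^3 \to 0$, which also forces $\max_n|\mathcal U_n| \to 0$ (so (5.2) applies for large $N$). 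Collecting the $O(\sum|\mathcal U_n|^3)$ and $O(\max_n|\mathcal U_n|\cdot\sum|\mathcal U_n|^3)$ error terms exactly as on the displayed lines in the proof of Theorem 5.4 gives $\log\varphi_N(\vec\xi) = -\tfrac12\beta^2\sum_n|\mathcal U_n|^2 + o(1) \to -\tfrac12\beta^2\sum_j\beta_j|\xi_j|^2$, i.e.\ $\varphi_N(\vec\xi)\to\prod_j\exp\!\big(-\tfrac12\beta_j\beta^2|\xi_j|^2\big)$. Since a complex Gaussian with density $\tfrac{1}{2\pi\beta_j\beta^2}\exp(-|w_j|^2/2\beta_j\beta^2)$ has exactly that characteristic function, $\varphi_N$ converges pointwise (indeed locally uniformly) to the characteristic function of (5.14), so (ii) holds. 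Corollary 4.12 now yields convergence of $\Pr\{T_N\in\mathcal E\}$ to the right-hand side of (5.14), uniformly over the boxes $\mathcal E = \prod_j(\mathscr A_{x_j}\times\mathscr A_{y_j})$, which is the claim; the continuous-parameter version follows from the H\"older-$\widetilde\alpha$ remark used in the proof of Corollary 3.4.

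I do not expect a genuine obstacle: the computation is a routine Lindeberg--L\'evy-style triangular-array argument. The only step needing real care is the bookkeeping that translates the $\mathbb{C}^J$-pairing (5.8) into the $\mathbb{R}^{2J}$-characteristic function consumed by Corollary 4.12 while simultaneously showing that the limiting quadratic form extracted from (5.12) is precisely the diagonal covariance displayed in (5.14); the unitary-invariance comment following (5.12) is exactly what legitimizes assuming that diagonal normalization. One could instead diagonalize (5.12) by a unitary change of basis at the outset, but carrying (5.12) through the computation directly is cleaner.
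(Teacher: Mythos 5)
Your proposal is correct and follows essentially the same route as the paper: reduce matters to Corollary 4.12 and verify locally uniform convergence of $\varphi_N(\xi_1,\dots,\xi_J)$ by rerunning the Theorem 5.4 expansion, with the cubic error controlled via $\|\vec{b}_n\|_1 \leqq J^{2/3}\|\vec{b}_n\|_3$ and (5.13), and the quadratic form converging to $\sum_j \beta_j |\xi_j|^2$ through (5.12). Your explicit check of hypothesis (i) with $\alpha=1$ via the trace of (5.12) is only a slightly more detailed version of the moment bound the paper handles as in Theorem 5.4.
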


\begin{proof}
One simply modifies the approach used over ${\mathbb C}^{1}$.    
We need to check [in an obvious notation] that\vspace*{.20ex}
\[
\varphi_N (\zeta_1, \kappa_1, \zeta_2, 
\kappa_2,\dots , \zeta_J, \kappa_J)\;
\rightrightarrows \; {\TS\prod^J_{j=1}} \, \exp 
\big (\!- \sfrac{1}{2} \, \beta_j\beta^{2}\MSH (\zeta^2_j + \kappa^2_j)\big )
\vspace*{.31ex}
\]
holds on every compact subset of ${\mathbb R}^{2J}$. To 
verify this, it is natural to write 
$\xi_j = \zeta_j+ i\kappa_j$ and then look at\vspace*{.15ex}
\begin{align*}
\varphi_N(\xi_1,\dots, \xi_J)\, &=\,
{\mathbb E}\big [\exp (i \,{\TS \sum^J_{j=1}} 
\Ree (\overline{\xi}_j\MSA T_{N,j}))\big ]\\[.31ex]
&=\,{\mathbb E}\big [ \exp(i \MSH\msfrac{1}{\sigma_N}\MSH
{\TS \sum^J_{j=1}}\MSD{\TS \sum^N_{n=1}}\Ree (\overline{\xi}_j b_{n,j} X_n)
\big ]\vspace*{.15ex}
\end{align*}
on the multidisk $\{|\xi_1| \leqq A,\dots, |\xi_J| \leqq A\}$. Putting
\[
\varphi (\xi) = \int_{{\mathbb C}} \exp 
[i \Ree (\overline{\xi}\MSA z)]dF(z) 
\]
as in (5.8) and using the independence of $X_n$, we first see that
\[
\varphi_N(\xi_1,\dots, \xi_J) = \prod^N_{n=1} \varphi ({\mathcal U}_n),
\quad\MSE  {\mathcal U}_n \equiv \msfrac{1}{\sigma_N}\, \sum^J_{j=1}\,
 \overline{b}_{n,j}\MSA\xi_j\, .
\]
From (5.11), we then get
\[
\varphi_N(\xi_1,\dots, \xi_J)  = 
 \prod^N_{n=1} \Big ( 1-\sfrac{1}{2}\beta^2 |{\mathcal U}_n|^2 +
\sfrac{1}{6} \MSA \Theta\MSA \rho^3 \MSA |{\mathcal U}_n|^3 \Big )\,. 
\]

By applying (5.3) with $x_j = |w_j|$, one 
immediately \vspace*{.31ex}checks that
\begin{equation}\label{5.15}
\frac{1}{J} \MSC\leqq\MSC \frac{\|\vec{w}\|_{\infty}}
{\|\vec{w}\|_{1 \MSB}} \MSC\leqq\MSC \frac{\|\vec{w}\|_{\lambda}}
{\|\vec{w}\|_1} \MSC\leqq\MSC 1 \vspace*{.31ex}
\end{equation}
for any $\vec{w} \in {\mathbb C}^J$ and $1 \leqq \lambda < \infty$. 
$\MSC$Temporarily fixing any $\mu \in [1,\infty)$, we now write   
$\MSB C_n = \|\vec{b}_n\|_{\mu}\MSB$ and avail ourselves of
(5.4)$\MSAH$+$\MSAH$(5.15) to see that\vspace{.31ex}
\[
\hspace{13.82em}\msfrac{D_N}{\sigma_N} 
\rightarrow 0 \vspace*{.48ex}\hspace{13.82em}(5.13')
\]
with $D_N = \max \{C_1,..., C_N\}$ is {\it equivalent} to 
(5.13).  $\MSC$Cf. (5.9$'$).

At this point, we set $\mu = 1$, \,so 
that $|{\mathcal U}_n| \leqq A C_n / \sigma_{N}$, \,and then 
keep $N$ large enough to have
\[
4 \beta^2 \Big ( \frac{A D_N}{\sigma_N} \Big )^2 + 
2 \MSA \sum^N_{n=1} \Big ( \rho \frac{A\MSH C_n}
{\sigma_N}\Big )^3 < 1\MSC .
\]
For some branch of the logarithm, we finally get 
\pagebreak
\begin{align*}
\log \varphi_N (\xi_1,\dots, \xi_J)\, & =\,
\sum^N_{n=1} \, \Big ( \!-\sfrac{1}{2}\beta^2 
|{\mathcal U}_n|^2 + \sfrac{1}{6}\MSA\Theta\MSA\rho^3\MSA
|{\mathcal U}_n|^3 \Big )\\
& \quad + \sum^N_{n=1} \, \Theta \Big ( \sfrac{1}{2}\beta^2
|{\mathcal U}_n|^2 + \sfrac{1}{6}\MSA\rho^3\MSA |{\mathcal U}_n|^3 
\Big )^2\\
& = - \sfrac{1}{2}\beta^2\MSA \sum^N_{n=1} \MSB |{\mathcal U}_n|^2 +
\sfrac{2}{3}\MSA\Theta\MSA\sum^N_{n=1}\,
\rho^3 |{\mathcal U}_n|^3\\
& = - \sfrac{1}{2}\beta^2 \MSA\sum^N_{n=1} \MSA
|{\mathcal U}_n|^2 + \sfrac{2}{3}\MSA\Theta\MSA\rho^3\MSA A^3
\MSA\sum^N_{n=1} \MSA \frac{C_n^3}{\sigma_N^3}  \vspace*{.62ex}
\end{align*}
exactly as in the proof of Theorem 5.4. $\MSD$Since $C_n 
\leqq J^{2/ 3}\MSA \|\vec{b}_n\|_{3}$ by (5.3), $\MSH$and
\[
\sfrac{1}{2}\beta^2 \MSA \sum^N_{n=1} \MSB |{\mathcal U}_n|^2 =
\sfrac{1}{2}\beta^2 \MSA \sum^J_{j=1}\, \sum^J_{\ell=1}\,
\overline{\xi}_j \MSH \xi_\ell \big (\msfrac{1}{\sigma_N^2}\,
\sum^N_{n=1} \, b_{n,j}\, \overline{b}_{n,\ell} \big )\MSE ,
\]
the desired convergence of $\varphi_N (\xi_1,\dots, \xi_J)$
follows at once. $\MSC$ \bx \vspace*{.31ex}
\end{proof}

In situations where the coefficients $\MSH b_{n,j}\MSH$ are 
{\it real} $\MSA$and$\MSH$ the probability distri-{\linebreak}bution 
$\MSH E(x)\MSH$ is as in Corollary 5.5, $\MSAH$a temporary passage 
to $dF \MSZ =\MSHZ dE \times dE$ in Theorem $\MSH$5.8$\MSH$ 
immediately shows \vspace*{-.46ex} that
\[
S_N \equiv \msfrac{1}{\sigma_N}\MSC\sum^N_{n=1}\MSC
{\mathcal X}_n\MSH (b_{n,j})\vspace*{-.46ex}
\]
becomes Gaussian \vspace*{.31ex}distributed like
\begin{equation}\label{5.16}
\int_{\mathcal E}\MSC{\TS \prod^J_{j=1}}\MSC \msfrac{1}{\beta
\sqrt{2 \pi \beta_j}}\MSB \exp \big ( -\msfrac{u_j^2}
{2 \beta_j \beta^2} \big )\MSC {\TS \prod^J_{j=1}}\MSC du_j
\vspace*{.31ex}
\end{equation}
in the limit of large $N$ over ${\mathbb R}^J$. The case 
$J = 1$ reduces to a minor extension of \vspace*{.05ex}
Corollary 5.5.

\begin{remark}
Theorem 5.8 is clearly a bit more ``exotic'' than Theorem 5.4.  It
is important to note, however, that both results 
are easy corollaries of the (now standard) 
Lindeberg formulation of the 
$\MSA$C.L.T.$\MSA$ over ${\mathbb R}^{2J}\MSHZ$; $\MSA$condition (5.7) 
enables the key link from ${\mathbb C}^J$ to ${\mathbb R}^{2J}$. 
$\MSAH$Cf. [Cr, \S\S X.3, VI.3--4] and, $\MSZ$for instance, 
[Fel2, pp.260--261, 263$\MSD$(lines 5--6,$\MSC$11--16)].  
A quick look at the change-of-basis idea
utilized in [Ber, pp.44--48$\MSC$(top)] is also helpful.  Seen from
this perspective, Theorem 5.8 (with $J > 1$) is simply a form of
the C.L.T. in which the successive random variable summands have
their supports restricted to ``thin'' sets.  

One very slight advantage to   
our ``pedestrian level'' proofs of 
Theorems 5.4 and $\MSH$5.8$\MSH$ is that, when viewed in 
conjunction with 
Theorems 4.8$\MSA$A--$\MSA$4.8$\MSA$C, mat-{\linebreak}ters 
are $\MSZ$\emph{explicit} enough therein to furnish what would seem to be 
a convenient springboard for the 
development of some parallel results having 
well-controlled error terms, $\MSA$at least for box-like 
${\mathcal E}\MSH$.$\MSA$\footnote{In 
our later work with $L$-functions, $\MSA dF\MSA$ 
will just be ordinary Haar measure on $\{ |z| = 1 \}$.}
$\MSD\MSD$Cf. [Fel2, pp.544--545].
\end{remark}

\begin{remark}
$\MSC$Coming back to a point just touched on [concerning (5.7)], 
suppose for a few moments that, beyond having mean $0$, 
distribution function $F(z)$   
merely satisfies ${\mathbb E}(|z|^q) < \infty$ with some $q \geqq 3$. (Cf.
(5.6).) Let $\| \cdot \|$ denote the Euclidean norm on ${\mathbb C}^J$. 
In formulating Theorem 5.8, we have been guided by a desire
to remain in a structural setting where: 
\begin{tabbing}
\phantom{mi}(i) \=  matters are robust 
enough to produce a good limit distribution for\\  
   \> $T_N$ anytime (5.12) holds and 
$\max \{\| \vec{b}_1 \|,...,\| \vec{b}_N \|
\} /\sigma_N = o(1)\,$;\:\;and, \\
\phantom{m}(ii) \> \emph{in the process of that}$\MSH$, \,every  
${\mathbb R}^{2J}\MSAH$-$\MSAH$ style second moment of $T_N$ is seen\\  
       \> to converge to \emph{some} limit as 
$N \rightarrow \infty$\,.
\end{tabbing}
The essential point in (i) is that convergence should still occur
even if the \linebreak vectors $\MSA \vec{b}_n\MSA$ are 
multiplied by  
arbitrary phase factors $\MSAH \exp(i\omega_n)\MSAH$.

In connection with (ii), thanks to the algebraic \vspace*{.25ex}identity
\[
\left ( \begin{array}{ll}
x_1 x_2 & x_1 y_2\\
y_1 x_2 & y_1 y_2
\end{array}\right ) = \sfrac{1}{2} 
\left ( \begin{array}{rr}
1 & 1\\
-i & i
\end{array}\right )\MSA
\left ( \begin{array}{ll}
z_1 \overline{z}_2 & z_1 z_2\\
\overline{z}_1 \overline{z}_2 & \overline{z}_1 z_2
\end{array}\right )\MSA
\sfrac{1}{2}\MSA
\left ( \begin{array}{rr}
1 & i\\
1 & -i
\end{array}\right ), \vspace*{.25ex}
\]
we certainly want, e.g. $\MSZ$for $J=1$, the 
limiting \vspace*{.25ex}covariance 
\[
{\mathcal M} \equiv \lim_{N\rightarrow \infty}
\left ( \begin{array}{ll}
{\mathbb E}(T_N \overline{T}_N) & {\mathbb E} (T_N^2)\\[0.7ex]
{\mathbb E} (\overline{T}^2_N) & {\mathbb E} (\overline{T}_N T_N)
\end{array}\right ) \vspace*{.25ex}
\]
to \emph{exist}. $\MSC$Taking $|b_{n,1}|\equiv 1$ 
and $\sigma_N = \sqrt{N}$ produces
\[
{\mathcal M} = \lim_{N\rightarrow \infty}
\left ( \begin{array}{cc}
A & \frac{B}{N}\MSA C_N\\[0.7ex]
\frac{\overline{B}}{N}\MSA\overline{C}_N  & A
\end{array}\right )\, ,
\]
with
\\[-0.93ex]
\[
A = \int_{{\mathbb C}} |z|^2 dF(z), \quad 
B= \int_{{\mathbb C}} z^2 dF(z), \quad
\mbox{and}\quad
C_N=\sum^N_{n=1}\MSA b^2_{n,1}\, .
\]
\\[-2.75ex]

Putting $A= 2 \beta^2 > 0$ w.l.o.g., and considering the effect 
of $e^{i\omega_n}\MSB$-$\MSB$twists 
in ${\mathcal M}\MSA$, it is nearly self-evident that 
the coefficient $B$ can only be $0$. \,Similarly 
for $J>1$. \emph{For this reason: the normalization (5.7) is}  
\emph{basically forced on us in the present setting.}

Further corroboration of this point 
ensues from the simple observation 
that, when $q \geqq 4$ and assumption (5.12) 
holds$\MSH$, $\MSAH$convergence of $T_N$ in 
distribution  
automatically carries with it fulfillment of 
property (ii).  One checks this by bounding the 
expectations of $\Ree (T_{N,j})^4$ and
$\Imm (T_{N,j})^4$ for each $j$, $\MSH$and then exploiting the 
comment made immediately after the proof of Corollary 4.12.  
Since $\int z \MSA dF = 0\MSA$, the only 
partitions of $\MSH 4\MSH$ that contribute 
nontrivially here are $\MSH 4\MSH$ and $2+2\MSAH$; the 
aforementioned expectations \vspace*{.05ex}are thus majorized by
\[
\frac{\mbox{constant}}{\sigma_N^4}\bigg (\sum^N_{n=1}\MSA\|\vec{b}_n\|^2 
\bigg )\MSA \bigg ( \sum^N_{\ell=1}\MSA\|\vec{b}_{\ell}\|^2 \bigg )\MSB .
\vspace*{.46ex}
\]
Compare: $\MSH$[Bil, pp.409$\MSA$(middle)$\MSC$--$\MSC$410$\MSA$(top)].  
$\MSB$By making use of the more sophisticated 
Marcinkiewicz-Zygmund inequality (see [MZ1, \S8] or [MZ2, \S3]) 
and the convexity of $\MSA\Theta(u) = u^\beta\MSA$ for 
$\MSH\beta > 1\MSH$, the foregoing observation is readily seen 
to hold for $\MSH 2 < q  < 4 \MSH$ as well$\MSH$. 
\end{remark}

\subsection{}
To wrap things up, it is now helpful to ``make \vspace*{-.105ex} 
a slight change of gears.''  Once one has 
available a reasonable ${\mathbb R}^k\MSA$-$\MSB$analog of
Esseen's lemma, it becomes natural to contemplate finding  
$\MSA k\MSA$-$\MSA$variable    
counterparts for not only the clas-{\linebreak}sical  
Berry-Esseen-type error estimates in the 
C.L.T., $\MSA$but also the 
related {\it re}-{\linebreak}{\it finements}$\MSAH$   
of {\it uniform asymptotic type}$\MSA$ \vspace*{.05ex}traditionally 
associated with the names of Edgeworth and Cram\'{e}r.
$\MSD$See [Cr, \S X.3 (para 1)] for some early remarks on both issues,  
$\MSA$albeit from the standpoint of a slightly different type 
of smoothing, viz., 
[Cr, \S\S $\MSA$VII.3, $\MSB$VII.5--6].$\MSH$\footnote{In this 
connection, see also [Po2, pp.172$\MSD$(lines 4--18), 
177$\MSD$(para 2)].}  

Though this report is 
clearly $\MSHZ\MSZ$\emph{not} the place for any sort of
comprehensive discussion of these  
matters (the technical details being rather heavy even in the case of
Theorems 5.4 and 5.8), it does seem reasonable 
to offer  
at least a few remarks on what can
be achieved using 
Theorems 4.8$\MSA$A$\MSA$--$\MSA$4.8$\MSA$C in \vspace*{.05ex} 
the \emph{classi-{\linebreak}cal} $\MSAH$C.L.T. setting of \S$\MSA$4.14.  
$\MSAH$[The ready availability of a pre-existing literature here  
facilitates preparation of an $\MSH$intelligible$\MSH$ commentary 
within the span of just a few pages.]

\subsection{}
In the $\MSAH$i.i.d. $\MSHZ\MSZ$situation of \S4.14, one has
\[
\vec{T}_n = \sfrac{1}{\sqrt n}\MSA ( \vec{X}_1 + \cdots + \vec{X}_n )
\hspace{1.6em} \mbox{and} \hspace{1.6em} \psi(\vec{v}) = \exp\MSH ( 
-\vsfrac{1}{2} \TS \sum \MSA v_j \MSH a_{j\ell}\MSH v_\ell )\MSB,
\]
wherein $a_{j \ell} = {\mathbb E}(x_j x_\ell)\MSAH$;  cf. (4.44).  The 
characteristic function $\psi(\vec{v})$  
corresponds to that of 
a $k\MSC$-$\MSB\MSH$variate 
Gaussian ($\MSH$[Cr, $\MSHZ\MSZ$p.109$\MSB$(144)]$\MSH$) 
having covariance 
matrix $\MSA [\MSH a_{j\ell}\MSH]\MSA$. $\MSH$Anytime 
the initial probability distribution  
$F(\vec{x})\MSAH$ (of $\MSA$\S$\MSH$4.14)$\MSA$ satisfies
\begin{equation}\label{5.17}
\int_{{\mathbb R}^k} \MSA \big ( \| \vec{x} \|_{\infty} \big )^q \MSA dF
\MSA < \MSA \infty 
\end{equation}
with an appropriately big $q \geqq 3$, there is a concomitant expectation 
that, after a modicum of calculation and parameter optimization,  
Theorems 4.8$\MSH$A$\MSH$--$\MSA$4.8$\MSH$C will each be 
found capable of producing quantitative refinements in 
the C.L.T. analogous  
$\MSH$to $\MSH$those $\MSH$known $\MSH$to $\MSH$hold $\MSH$in $\MSH$the 
$\MSH$case $\MSH$of $\MSH$one$\MSA$-variable$\MSA$.  

In particular, $\MSB$insofar as the characteristic function $\varphi$ 
of $d\MSH F$ satisfies
\begin{equation}\label{5.18}
\limsup_{\| \vec{v} \| \rightarrow \infty} \MSAH |\varphi(\vec{v})| 
\MSA < \MSA 1
\end{equation}
[\emph{i.e.}, Cram\'{e}r's condition (C)], it should emerge rather 
quickly that there exist \linebreak asymptotic developments 
of $\MSAH$Edgeworth-type$\MSA$ over ${\mathbb R}^k$ comparable to 
those articulated in, say, [Cr, \S VII.5], [Ess, pp.48--52], 
and [Fel2, pp.539, 541$\MSAH$(bot-{\linebreak}tom)] 
for ${\mathbb R}^1$.  To help one appreciate the 
more formal side of things, $\MSA$a quick review 
of [Cr, pp.26$\MSB$(lemma 1), 71$\MSB$(lemma 2), 
74$\MSB$(lemma 3), 75$\MSB$(102), 81$\MSB$(C), \,85, 86$\MSAH$(101a)], 
[Ess, p.44$\MSAH$(lemma 2)], and [vBa, pp.72--74$\MSAH$(top)] is very 
useful.  The paper by von Bahr deals specifically 
with ${\mathbb R}^k$ and has the added advantage of addressing 
derivatives of $\MSA\varphi (\vec{v}/ \sqrt n )^{\MSH n}\MSA$ 
as well$\MSA$; cf. $\MSHZ$his Lemmas 2 
and 3.$\MSH$\footnote{$\MSH$Note that the $\alpha$ 
in Lemma 3 depends on the specifics of 
how (5.18) is fulfilled.} 

Just as in the case of ${\mathbb R}^1$, $\MSA$there are no real 
difficulties implementing these technical formalities $\MSZ$in 
$\MSZ$the $\MSZ$overall $\MSZ$calculation  
$\MSZ$\emph{until} $\MSHZ$it $\MSZ$comes time 
to bound the\linebreak  
final error term.  The 
essential question then shifts to how small  
a size can be achieved in the latter 
through an appropriate choice of parameters.  The 
\linebreak outcome in this will naturally depend on the 
particular type of smoothing \linebreak inequality 
that is available.

Theorem 2 in [vBa] shows that, under hypothesis (5.18), 
matters behave {\it precisely as one would expect}$\MSAH$   
given the analogous results over ${\mathbb R}^1$. $\MSC$von 
Bahr's \linebreak theorem serves to refine an earlier 
estimate (of Berry-Esseen type) obtained around twenty 
years earlier by H.$\MSD$Bergstr\"{o}m; $\MSB$see 
[Berg1, $\MSH$pp.$\MSH$109,$\MSD$121$\MSA$(top)] 
and [Berg2, $\MSA$p.40$\MSB$(D)].$\MSH$\footnote{$\MSH$It is 
illuminating at this juncture to also have a look at  
$\MSA$[Rao, $\MSH$p.$\MSH$360, $\MSH$theorem 2]$\MSAH$ and\linebreak  
the less polished, partial refinement obtained 
by [Dun, $\MSH$Sad] 
around 1966--68 utilizing Sadi-{\linebreak}kova's 
slightly $\MSZ$\emph{non}linear,  
but highly suggestive,  
two-dimensional variant of (3.1).} 

In perusing this work, what becomes evident $\MSZ$nearly 
$\MSZ$immediately $\MSZ$is that our Theorem 4.8 $\MSH$(with 
its $D_{\mathscr C}\MSB$-$\MSB$terms and related 
sum over ${\mathscr P}$)$\MSH$ manifests a certain   
\emph{structural similarity} with    
[vBa, pp.77(8), $\MSZ$75(lemma 4)]. $\MSB\MSH$Likewise 
for Theorem 4.8$\MSH$C.$\MSAH$\footnote{Readers 
for whom reference [vBa] is new may find it helpful to view 
this latter assertion in the more specific setting of  
[vBa, pp.$\MSH$74$\MSA$(line 5)($h_T$)(5), 75$\MSA$(top), 
76$\MSA$(bottom), 77$\MSA$(lines 1-10, 13-15, 18-19)].  
$\MSC$Compare: [Berr, p.127$\MSA$(28)], $\MSAH$[Fel2,  
pp.$\MSA$537$\MSA$(3.3)$\MSA$(3.6), 538$\MSA$(3.12)].}     
$\MSA$In proving Theorem 4.8$\MSH$C, we have 
clearly been influenced by several aspects of this 
parallelism. (Cf., in particular, 
[vBa, pp.76$\MSA$(7), $\MSZ$77$\MSA$(line 
14)]. $\MSA$The \linebreak 
decomposition of $\MSA I\MSA$ towards the middle  
of our proof can be seen as a natural counterpart 
of$\MSA$ p.76$\MSA$(7)$\MSH$.)

With these remarks as a backdrop,   
$\MSH$it should come as no big  
surprise that 
applying Theorem 4.8$\MSH$C in conjunction 
with [vBa, Lemma 3] is readily checked $\MSH$to $\MSH$lead  
$\MSH$to $\MSH$a $\MSH$result essentially identical   
$\MSH$to $\MSH$von Bahr's Theorem 2.  One simply 
takes\vspace*{.31ex}
\[
  \Omega_j = \msfrac{1}{\sqrt{k}}\MSH (\sqrt{n}\MSH)^{q-1}
\vspace*{.31ex}
\]
as in [vBa, p.77$\MSA$(lines 17--18)] with $q \geqq 3\MSA$, $\MSB$observes 
that\vspace*{.10ex}
\begin{equation}\label{5.19}
0 \MSB\leqq\MSB |t|^n \exp (-\vsfrac{1}{2}\beta 
\MSH |t|^\frac{2}{q-1} ) \MSB\leqq\MSB 
c_1(n,\beta,q)\MSB  \vspace*{.10ex}
\end{equation}
\begin{equation}\label{5.20}
0 \MSB < \MSB c_2(k,\psi) \MSB \leqq\MSB 
\frac{ \big ( { \TS \sum^k_{j=1} }\MSH |t_j|^n \big)^\psi}
{ {\TS \sum^k_{j=1} }|t_j|^{n \MSH \psi} }
\MSB \leqq\MSB  c_3(k,\psi) \MSB < \MSB \infty \vspace*{.10ex}
\end{equation}
for every $n \geqq 0\MSA, \MSAH \beta > 0\MSA, \MSAH \psi > 0\MSA$, 
$\MSB$and then focuses on estimating the $d\vec{v}\MSAH$-$\MSAH$integrals 
in (4.40) one partition at a time, $\MSA$aided by von Bahr's 
Lemma 3.  Writing
\[
{\mathscr C} = \{1,2,\MSZ...\MSA,\delta \}\hspace{.15em} ,\hspace{1.2em} 
{\mathscr D} = \{\delta +1,\MSZ...\MSA,r \} \hspace{.15em} ,\hspace{1.2em} 
{\mathscr B} = \{ r+1,\MSZ...\MSA,k \}\hspace{.15em},
\]
w.l.o.g., $\MSA$it is helpful to collapse the  
region of integration down to
\[
{\TS \prod^r_{j=1}}\MSH [-\Omega_j,\MSH \Omega_j] \times 
{\TS \prod^k_{j\MSA = \MSA r+1}}\MSA \{ v_j = 0 \} \vspace*{.46ex}
\]
and then introduce slabs for $j \in [1, \delta]\MSH$. $\MSA$In view of 
(4.39), there is no need to \linebreak look at anything worse than a 
$\MSA$first$\MSA$ derivative in Lemma 3. $\MSAH$Since 
$|v_j^\blacktriangle| \geqq 1\MSA$ and
\[
\int^{\Omega_j}_{-\Omega_j}\MSH \msfrac{1}{\Omega_j}
\MSA dv_j = 2 \hspace{.20em}, \hspace{1.50em}
\int^{\infty}_{-\infty}\MSH \exp ( -\vsfrac{1}{2}\beta 
|t|^{\frac{2}{q-1}} )\MSA dt 
\MSB < \MSB \infty \hspace{.20em},
\]
one immediately recovers Theorem 2 of [vBa]. 

Utilization of Theorem 4.8$\MSH$B in place of 4.8$\MSH$C clearly 
produces a \emph{counterpart} of Theorem 2 having 
an extra factor of 
size $\MSA$at most$\MSA$ $\MSH c_{10} (D+1)^k \MSH$ in the final 
error term.  (With a little more effort, $\MSH (D+1)^k \MSH$ can 
be replaced by $\MSA \log^{\MSA k-1\MSH}\MSH(D+e)\MSB$; $\MSA$cf. 
(5.21)+(5.22) below.)

In the case of Theorem 4.8$\MSH$A, matters are slightly more 
involved due to the need to select appropriate values for 
both $\MSAH \alpha\MSAH$ and $\MSAH \Delta\MSAH$ in (4.29)$\MSAH$.  
As will soon become apparent, $\MSZ$however$\MSH$, the choice 
of $\MSH\alpha \in (0,q]\MSH$ is 
largely immaterial \linebreak -- in that any modifications 
therein  
lead to $\MSA$effects$\MSA$ felt only at the level of the 
implied constant in the final error term. $\MSC$(That 
${\mathbb E}(\|\vec{T}_n\|^{\alpha}) = O(1)$ for every 
$\alpha$ in $\MSH [2,q]\MSH$ is an easy consequence of the  
Marcinkiewicz-Zygmund inequality$\MSH$; $\MSAH$cf. the last four lines 
of Remark 5.10. $\MSB\MSH$Setting $\alpha = 2$ is, of 
course, $\MSH$simplest.)

To \emph{bound}$\MSH$ (4.29), one 
starts by taking $\MSH \Omega_j \MSH$ as above 
and letting $\gamma = \vsfrac{2}{q-1} \MSH\MSB$. \linebreak We then 
note (using (5.20)) that\vspace*{.025ex}
\begin{equation}\label{5.21}
\int^{\Omega_1}_{-\Omega_1} \cdots \int^{\Omega_k}_{-\Omega_k} \MSA
\msfrac{\|\vec{v}\|^q \MSH \exp(-\beta\MSH \|\vec{v}\|^{\gamma})}
{|v^{\bullet}_1|\cdots|v^{\bullet}_k|}\MSA d\vec{v} \MSC \leqq\MSC 
c_4(k,q) I(q)I(0)^{k-1}\MSB , \vspace*{.25ex}
\end{equation}
wherein $\|\vec{v}\MSH\|$ is the standard 
Euclidean norm, $\MSAH \Delta > 1$, 
\[
I(m) = \int^{\infty}_{0}\MSH v^m\MSH
\frac{\exp(-\widehat{\beta} \MSH v^\gamma)}{v^{\bullet}}\MSA dv 
\hspace{1.25em}\mbox{for}\hspace{1.00em} m \geqq 0\MSE,
\]
and $\widehat{\beta}$ is some elementary fraction of $\beta$. $\MSC$Since 
$ 1/v^{\bullet} = \min\MSH (\Delta,\MSH 1/v)$ for $v > 0\MSH$,\linebreak 
it is natural to set $\MSA v = w/ \Delta\MSA$ 
in $I(m)\MSA$. $\MSB$By elementary 
calculus, this promptly gives \vspace*{.93ex}
\begin{equation}\label{5.22}
I(m) \MSB = \MSB O(1)\MSH [1 + \delta_{m0}\MSA (\log \Delta)
\MSH] \MSC , \vspace*{.93ex}
\end{equation}
with an implied constant depending solely 
on $\MSA(m,\MSB \beta, \MSB \gamma, \MSB k)\MSA$.  Taking account of 
Lemma 3 and page $\MSHZ$77$\MSD$(lines 17--22) in [vBa], things 
are now \emph{optimized} (modulo unimportant 
constants) $\MSA$by declaring 
$\MSC \Delta = 10\MSAH n^{(q-1)/ 2\alpha}\MSH$. $\MSA$The resulting 
error term matches that of von Bahr's Theorem 2 
apart from an extra factor of $(\log n)^{k-1}\MSA$. $\MSC$That 
is to say, $\MSA$with Theorem 4.8$\MSH$A, $\MSB$one ultimately 
obtains 
\[
|\MSH\mbox{Remainder}| \MSB \leqq \MSB C d(n) 
\Big ( \msfrac{1}{\sqrt{n}} \Big )^{q-2} \MSH ( \log n )
^{k-1}\MSC\MSH,
\]
with some $d(n) = o(1)$.  Exploitation of the final sentence 
in Theorem 4.8$\MSH$A produces the slightly weaker 
bound $\MSC C \MSH d(n)\MSH (1/ \sqrt{n})^{q-2-\nu}\MSA$ with
\[
\nu \MSB=\MSB k \msfrac{q-2}{q+k} \MSA\in\MSA (0,k] \MSB .
\]
\newpage 
\noindent Both error estimates seem eminently reasonable  
given the relative crudity of Theorem 4.8$\MSH$A. $\MSA$(The second   
bound is of interest mainly for large $q$.  In both cases, 
$\MSH$note that $\MSAH C\MSAH$ and $\MSAH d(n)\MSAH$ typically 
depend on $\{ k,\MSB q, \MSA F \}\MSH$; $\MSB$cf. (5.22).)
 
As a quick review of the foregoing considerations 
makes clear, there are \linebreak undoubtedly very  
similar expansions that hold in the setting of Theorems 5.4 
and 5.8$\MSH$ -- $\MSH$at least under hypothesis (5.18) and some 
suitable magnitude restrictions on both $\MSH\|\vec{b}_n\|\MSH$ 
and the remainder term in (5.12). $\MSH$Our subsequent 
work with $L\MSAH$-$\MSAH$functions 
will implicitly entail 
working out the details of at 
least one such case$\MSA$; $\MSB$see footnote 21$\MSH$.
$\MSA$(Notice incidentally that the 
measure in footnote 21 has $\varphi(\xi) =  J_{0}(|\xi|)$ 
for $\xi \in {\mathbb C}\MSB$; $\MSC$cf., e.g., 
[Fel2, p.523$\MSD$(7.8)]. $\MSC$In (5.18),  
$| \varphi(\xi) |$ will thus decay like $1/ \sqrt{|\xi|}\MSA$.)

\subsection{}
Modern treatments of multivariate Edgeworth-type 
expansions have \linebreak largely come to focus on regions 
and set-ups substantially more general than 
rectangles.  $\MSD$See, for instance, [BhR, 
pp.$\MSAH$208, 214$\MSA$(20.44)$\MSA$--$\MSA$215$\MSA$; 
$\MSC$52--54, 210--212]$\MSA$; also [vBa, 
pp.$\MSAH$85$\MSA$(line 5)$\MSA$--$\MSA$87$\MSA$(middle)].  

In connection with such results, it is helpful 
to keep two key facts in mind.  $\MSD$First: 
$\MSA$that in contexts where the relevant 
shapes ${\mathcal E}$ are smoothly bounded 
and overshoots in $q$ are a 
non-issue,\footnote{$\MSH$e.g., when $dF$ is 
compactly supported} the $\MSZ$\emph{uniformity} aspect 
of \S$\MSH$5.11--12 will invariably permit one to 
obtain \emph{some} results of Edgeworth-type simply by 
making inner/outer approximations based on rectangular 
grids and then optimizing the choice 
of grid size(s) only at the very end. $\MSD$Second: 
$\MSA$though Theorems 4.8--4.8$\MSH$C \linebreak have
intentionally been kept 
quite close to (3.1) format-wise$\MSH$,  
$\MSH$it has been found increasingly $\MSZ$expedient  
over the years for work involving 
multivariate normal \linebreak approximation to$\MSH$ make$\MSH$ 
use$\MSH$ of$\MSAH$ smoothing$\MSA$ inequalities$\MSH$ having$\MSH$ 
less$\MSH$ rigid, {\it convo-{\linebreak}lution$\MSB$-$\MSH$based}   
$\MSAH$formats.   
$\MSAH$See [Ess, pp.$\MSH$101, 104$\MSA$(53)(56), 
110$\MSA$(lines 5--9)]\footnote{(lines 5--9 are nicely 
supplemented by [vBa2, pp.$\MSA$63$\MSA$(8), 64(lines 12--19), 67(C)--68])},  
$\MSAH$[Berg2,
p.47$\MSA$(lemma 8)],
$\MSAH$[vBa, pp.79$\MSA$(9), 81$\MSA$(11)], 
$\MSAH$[Saz, pp.184$\MSA$(lemma 2), 195$\MSA$(lemma 6)], 
and [BhR, pp.$\MSB$94$\MSA$(11.13), 
98$\MSA$(11.26), 102$\MSA$(top), 210$\MSA$(top), 
84--88, 274--278] 
for a few examples, in chronological order.       
$\MSA$Pages 274--278 in [BhR] relate to 
the much-discussed $\MSHZ$\emph{Stein method}$\MSHZ$ of establishing 
Berry-Esseen type bounds.\footnote{$\MSH$(Compare 
266 lines 12--19$\MSH$, and note the missing $\MSA \ast K\MSH$  
on 276 line 7.)}  
A concomitant look at [CGS, 
pp.$\MSB$336$\MSA$(12.66)--337$\MSA$(top), 4--6, 
16$\MSA$(2.13)--18$\MSA$(top), 20$\MSA$(2.30), 46--48] 
and [Bar, pp.$\MSB$293--294] serves to provide 
some valuable additional perspective on these more recent        
matters.\vspace*{-0.93ex}

\section*{\hspace{-0.4em}{\fontsize{10.333}{10.333}\selectfont 
\S6. Acknowledgments}}
\vspace*{-.31ex}
{\fontsize{9.9pt}{10.45pt}\selectfont
\hspace*{.60em} I thank Yumi Karlsson for her 
assistance in LaTEXing several earlier drafts   
of this\linebreak report. 
The first of these dates back to 2010, while I was 
still active at Uppsala University in Sweden.  More 
recently, 
I am grateful to the Institute for Advanced \vspace*{.025ex} Study 
in Princeton for 
providing me with excellent working conditions and a supplemental 
research grant (from the Bell Companies) during the Spring Term 
of 2012, when work on the current, improved, version 
was being done.  

}
\normalsize

\vspace*{4.0ex}
\small

\noindent
\textsc{School of Mathematics}\\[-.31ex] 
\textsc{University of Minnesota}\\[-.25ex] 
\textsc{Minneapolis, Mn. 55455 USA}\\[+1.0ex] 
E-mail address: $\MSC\MSC$\textbf{hejhal@math.umn.edu}

\normalsize

\end{mainmatter}

\end{document}